\documentclass[12pt,a4paper]{amsart}
\PassOptionsToPackage{showonlyrefs}{mathtools}
\usepackage{preamble}
\usepackage{mathtools}
\usepackage{amsmath}
\usepackage{amssymb}
\usepackage{mathrsfs}
\usepackage{indentfirst}
\allowdisplaybreaks

\declaretheorem[sibling=thm]{corollary}
\declaretheorem[name=Main Theorem,sibling=thm]{maintheorem}



\makeatletter
\@ifpackageloaded{hyperref}{}{\usepackage[colorlinks=true,linkcolor=blue,citecolor=blue,urlcolor=blue]{hyperref}}
\@ifpackageloaded{cleveref}{}{\usepackage[nameinlink,noabbrev,capitalise]{cleveref}}
\makeatother
\crefname{maintheorem}{Main Theorem}{Main Theorems}
\Crefname{maintheorem}{Main Theorem}{Main Theorems}
\crefname{corollary}{Corollary}{Corollaries}
\Crefname{corollary}{Corollary}{Corollaries}
\crefname{section}{Section}{Sections}
\Crefname{section}{Section}{Sections}
\crefname{enumi}{item}{items}
\Crefname{enumi}{Item}{Items}

\title[SRB Measures for $C^{1+\mathrm{Dini}}$ Diffeomorphisms]{SRB Measures for $C^{1+\mathrm{Dini}}$ Diffeomorphisms}

\begin{document}

\begin{abstract}
For $C^{1+\mathrm{Dini}}$ diffeomorphisms, we prove a Ledrappier--Young-type characterization of SRB measures among invariant measures whose supports admit dominated splittings. When the dominating bundle has only positive Lyapunov exponents, absolute continuity of the conditional measures on the corresponding Pesin unstable manifolds is equivalent to the partial Pesin entropy formula. As an application, we obtain SRB measures for partially hyperbolic mostly expanding attractors in the $C^{1+\mathrm{Dini}}$ category. Counterexamples are provided to show that these conclusions fail in the \(C^1\) category, even under uniform hyperbolicity.
\end{abstract}

\maketitle
\tableofcontents

\section{Introduction}\label{sec:introduction}

A central problem in smooth
ergodic theory is to study the distinguished invariant
measures.
The Lebesgue measure provides the natural reference distribution for initial
conditions, but is generally not preserved by the dynamics. One is
therefore led to look for invariant probability measures whose statistical
behavior can nevertheless be observed from a set of initial conditions of
positive Lebesgue measure. Measures with such a positive-volume basin are
usually called physical measures. In hyperbolic dynamics, the geometric
mechanism underlying this physical behavior is characterized by the
Sinai--Ruelle--Bowen property: after disintegrating the measure on
unstable manifolds, the conditional measures are absolutely
continuous with respect to the corresponding leafwise Riemannian volumes.

This picture originated in uniformly hyperbolic dynamics. For uniformly
hyperbolic attractors, Sinai, Ruelle, and Bowen constructed the canonical
invariant measures that now bear their initials; see
\cite{Sinai72,Bowen75,Ruelle76}. The passage from uniform to nonuniform
hyperbolicity is provided by Pesin theory. For a $C^{1+\alpha}$
diffeomorphism, local stable
and unstable manifolds exist at almost every point, thereby making it
possible to formulate the SRB property through the absolute continuity of conditional measures on
the measurable family of unstable manifolds; see \cite{BP07}.

There is a parallel description in terms of entropy and Lyapunov exponents.
For every invariant probability measure of a diffeomorphism, Margulis--Ruelle's inequality gives
\[
    h_\mu(f)
    \leqslant
    \sum_{\lambda_i(\mu,f)>0}\lambda_i(\mu,f),
\]
where the Lyapunov exponents are counted with multiplicity
\cite{Ruelle78}. The equality,
\[
    h_\mu(f)
    =
    \sum_{\lambda_i(\mu,f)>0}\lambda_i(\mu,f),
\]
is known as the Pesin entropy formula \cite{Pesin-entropyformula_1977}. The work of
Ledrappier--Young \cite{LY2,LY85}, building on
Ledrappier--Strelcyn \cite{LS82}, shows that in the classical
$C^{2}$ setting, this entropy identity is
equivalent to the geometric SRB property: the Pesin entropy formula holds
precisely when the conditional measures on unstable manifolds are
absolutely continuous with respect to leafwise volume. These results under $C^{1+\alpha}$ regularity were recently proved by Brown \cite{AaronBrown2022}, see also Saghin \cite{SaghinRadu2025}.

The regularity assumption behind this characterization is not merely
technical. For a general $C^1$
diffeomorphism, the modulus of continuity of the derivative may decay too
slowly for the equivalence between the SRB property and the Pesin entropy formula. The results in the presence of domination for
absolutely continuous measures were obtained by Sun and Tian
\cite{ST12}, while the results of Quas \cite{Quas99} and Avila--Bochi \cite{Avila06} demonstrate how fragile
absolute continuity phenomena can be in the $C^1$ category.

A natural regularity class between $C^1$ and $C^{1+\alpha}$ is the Dini condition. A
$C^1$ map $f$ is called $C^{1+\mathrm{Dini}}$ if the derivative $Df$ admits a modulus of
continuity $\omega$ satisfying
\[
    \int_0^1\frac{\omega(r)}{r}\,dr<\infty.
\]
Equivalently, the oscillation of the derivative is summable along every
geometric scale:
\[
    \sum_{n=1}^{\infty}\omega(\lambda^n)<\infty
    \qquad\text{for every }0<\lambda<1;
\]
see, for instance, \cite[Proposition~2.5]{Boukhecham2024}. The Dini
continuity could be strictly weaker than the H\"older continuity but retains exactly the summability needed
for bounded-distortion arguments. Dini regularity has appeared in
differentiable dynamics since the work of Anosov
\cite{Anosov67a,Anosov67b} and has subsequently been used in the study
of absolutely continuous invariant measures and SRB measures for
expanding and hyperbolic systems; see, among others,
\cite{LZ2000,FJ2001,JY2022,TXL2024, Boukhecham2024,Ounesli2024}.

This paper addresses whether the Ledrappier--Young characterization remains
valid for $C^{1+\mathrm{Dini}}$ diffeomorphisms admitting a dominated
splitting. We prove that, when all Lyapunov exponents along the dominating
bundle are positive, absolute continuity of the conditional measures on
the corresponding Pesin unstable manifolds is equivalent to the partial
Pesin entropy formula. We then apply this characterization to establish the
existence of SRB measures for $C^{1+\mathrm{Dini}}$ partially hyperbolic attractors with a mostly
expanding center, and conclude with uniformly hyperbolic $C^1$
counterexamples.

\subsection{The main theorem}

Let $M$ be a compact smooth Riemannian manifold without boundary, and let $f\in\mathrm{Diff}^1(M)$. 

If $K\subset M$ is a compact $f$-invariant set, a $Df$-invariant splitting
\[
    T_KM=E\oplus F
\]
is called a dominated splitting, with $F$ dominating $E$, if there exist constants $C\geqslant 1$ and $0<\tau<1$ such that
\[
    \bigl\|Df^n|_{E(x)}\bigr\|\,
    \bigl\|(Df^n|_{F(x)})^{-1}\bigr\|
    \leqslant C\tau^n
\]
for every $x\in K$ and every $n\geqslant1$. 

Let $\mu$ be an ergodic $f$-invariant probability measure and denote its Lyapunov exponents, given by the Oseledets Theorem \cite{Oseledets} as
\[
    \lambda_1(\mu,f)\geqslant\lambda_2(\mu,f)\geqslant\cdots
    \geqslant\lambda_{\dim M}(\mu,f),
\]
counted with multiplicity.

Assume that $T_{\operatorname{supp}(\mu)}M=E\oplus F$ is a dominated splitting with $F$ dominating $E$, and that $\dim F=k$. We assume that the exponents in $F$ are all positive. It follows that the exponents along $F$ are precisely $\lambda_1(\mu,f),\ldots,\lambda_k(\mu,f)$, and
\[
    \lambda_k(\mu,f)>\max\{\lambda_{k+1}(\mu,f),0\}.
\]
For $\mu$-almost every $x$, define
\begin{equation}\label{eq:def-Wk-intro}
    W^k(x)=\left\{y\in M:
        \limsup_{n\to\infty}\frac1n
        \log d(f^{-n}x,f^{-n}y)\leqslant-\lambda_k(\mu,f)
    \right\}.
\end{equation}
The results in \cite{ABC2011,Gan2002} identify $W^k(x)$ with the $k$-dimensional Pesin unstable manifold associated with the first $k$ Lyapunov exponents. In particular, $W^k(x)$ is tangent to $F(x)$ at $x$ for $\mu$-almost every $x$. It is worth noting that $W^k$ might not be a manifold without the assumption of dominated splitting, see \cite{BCS13} for instance.

A measurable partition $\xi$ is subordinate to $W^k$ if, for $\mu$-almost every $x$, the atom $\xi(x)$ is contained in $W^k(x)$ and contains a leafwise neighborhood of $x$. There exists an increasing measurable partition subordinate to $W^k$ \cite{TY,Part2,WWZ}; here increasing means that $f^{-1}\xi$ refines $\xi$. The corresponding $k$-th partial entropy is
\[
    h^k_\mu(f)=H_\mu(\xi\mid f\xi),
\]
and is independent of the choice of such a partition. Conditional measures on $W^k$ will always mean the Rokhlin conditional measures associated with a subordinate partition, regarded as measures on the corresponding Pesin unstable manifolds.

In this setting, the partial Ruelle inequality bounds $h^k_\mu(f)$ by the sum of the Lyapunov exponents carried by $F$ \cite{WWZ}. Our main result proves that the partial entropy formula holds if and only if the conditional measures on $W^k$ are absolutely continuous  for $C^{1+\mathrm{Dini}}$ diffeomorphisms.

\begin{maintheorem}\label{main-thm-intro}
Let $\mu$ be an ergodic invariant probability measure of $f\in\mathrm{Diff}^{1+\mathrm{Dini}}(M)$. Suppose that $T_{\operatorname{supp}(\mu)}M=E\oplus F$ is a dominated splitting, with $F$ dominating $E$, $\dim F=k$, and $\lambda_k(\mu,f)>0$. Then
\[
    h^k_\mu(f)=\sum_{i=1}^k\lambda_i(\mu,f)
\]
if and only if the conditional measures of $\mu$ on $W^k$ are absolutely continuous with respect to the leafwise Riemannian volume.
\end{maintheorem}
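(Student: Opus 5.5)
The plan is to follow the Ledrappier--Young strategy, in the form adapted to a dominated splitting by Sun--Tian and in Brown's $C^{1+\alpha}$ version. The only input that uses regularity is a bounded-distortion estimate for the Jacobian along $W^k$. The Dini condition supplies exactly this estimate, with no Hölder exponent needed.

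\textbf{Step 1: geometry and distortion.} By domination and $\lambda_k(\mu,f)>0$, for $\mu$-a.e.\ $x$ the set $W^k(x)$ is a $C^1$ immersed $k$-dimensional submanifold tangent to $F$ (\cite{ABC2011,Gan2002}). On a Pesin block, local leaves $W^k_{\mathrm{loc}}(x)$ are uniformly $C^1$ graphs over $F(x)$. Backward iterates contract them exponentially: $d(f^{-n}y,f^{-n}z)\leqslant C_\ell e^{-n(\lambda_k-\varepsilon)}d(y,z)$ along leaves, with tempered constants. Write $J^F(y)=|\det Df|_{T_yW^k}|$. Let $\omega$ be a modulus of continuity for $Df$. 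Then $y\mapsto T_yW^k$ is Dini-controlled along leaves, via the usual graph-transform argument with domination giving a contraction on the Grassmannian. Hence $|\log J^F(f^{-n}y)-\log J^F(f^{-n}z)|\lesssim \omega(C e^{-n\lambda})$. Summing over $n$ and using $\sum_n\omega(\lambda^n)<\infty$, the function
\[
\Delta(y,z)=\prod_{n\geqslant1}\frac{J^F(f^{-n}y)}{J^F(f^{-n}z)},\qquad z\in W^k(y),
\]
converges, is continuous in $z$, and is bounded away from $0$ and $\infty$ on $\xi(y)$ on Pesin blocks. I expect this step to be the main obstacle. The delicate point is that the tangent bundle of the leaves is only $C^0$ along leaves with a Dini modulus, rather than Hölder. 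I would first show, by summing the contributions of each graph-transform step, that the modulus of $x\mapsto T_xW^k$ along leaves is again a Dini modulus of the form $\tilde\omega(r)=\sum_j \omega(\lambda^j r)$ or similar.

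\textbf{Step 2: SRB $\Rightarrow$ entropy formula.} Take an increasing partition $\xi$ subordinate to $W^k$, as in \cite{TY,WWZ}. Suppose $\mu^\xi_x=\rho_x\,m_{W^k(x)}$. Invariance gives the Jacobian relation $\rho(fy)/\rho(y)$ in terms of $J^F$. Then
\[
H_\mu(\xi\mid f\xi)=-\int\log\mu^{f\xi}_x(\xi(x))\,d\mu=\int\log J^F\,d\mu=\sum_{i=1}^k\lambda_i(\mu,f).
\]
The last equality uses Oseledets on $F$. The middle equality is the standard computation, which needs integrability of $\log\rho$. That integrability follows because $\rho$ is identified with $\Delta(x,\cdot)/\int_{\xi(x)}\Delta$ up to the identity $\rho(y)/\rho(z)=\Delta(y,z)$. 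This identity is derived from invariance and is valid by Step 1.

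\textbf{Step 3: entropy formula $\Rightarrow$ SRB.} Define the reference measures $\nu_x$ on $\xi(x)$ by $d\nu_x=\Delta(x,\cdot)\,dm/L(x)$, where $L(x)=\int_{\xi(x)}\Delta(x,z)\,dm(z)$. Step 1 makes these well-defined probability measures. The definition gives $\nu$-quasi-invariance: $f^{-1}$ maps $\nu$ on $f\xi$-atoms to $\nu$ restricted and renormalized on $\xi$-atoms, with Jacobian $J^F$. Set $q(y)=\nu_y((f^{-1}\xi)(y))$, the $\nu$-mass of the refined atom. Computing as in Ledrappier--Young gives
\[
\int -\log q\,d\mu=\int\log J^F\,d\mu=\sum_{i\leqslant k}\lambda_i(\mu,f).
\]
This relies on the telescoping identity $\log L(fx)-\log L(x)$ together with integrability of $\log L$, which bounded distortion supplies. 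Now compare with $H_\mu(f^{-1}\xi\mid\xi)=h^k_\mu(f)$. Jensen's inequality, applied to $\log\frac{d\nu}{d\mu^\xi}$ restricted to atoms of $f^{-1}\xi$, gives $h^k_\mu\leqslant\int-\log q\,d\mu$, with equality iff $\mu^\xi_x=\nu_x$ on the $\sigma$-algebra generated by $f^{-n}\xi$ for each $n$. Because $\xi$ is generating in the sense that $f^{-n}\xi\to$ the point partition on leaves, we conclude $\mu^\xi_x=\nu_x\ll m_{W^k}$. This reproduces the partial Ruelle inequality of \cite{WWZ} as a byproduct.

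Finally, Step 3 needs $\mu$-integrability of $\log q$ and $\log L$, together with the fact that the atoms of $\xi$ have leafwise diameters bounded on Pesin blocks. I would take both from the construction of $\xi$ in \cite{WWZ}, which uses only domination and $C^1$ regularity.
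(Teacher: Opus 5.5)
Your architecture matches the paper's. You get Dini control of the plaque tangents from a graph transform, hence convergence of $\Delta$, and then use the Ledrappier--Young reference measures $\nu_x\propto\Delta(x,\cdot)\,m_x$, Jensen on the countable partition $(f^{-1}\xi)|_{\xi(x)}$, and the generating property of $f^{-n}\xi$. (The paper does not reprove SRB $\Rightarrow$ entropy formula; it cites \cite{WWZ}, where this direction holds for $C^1$ maps with domination.)

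However, the way you propose to resolve what you call the main obstacle would fail. The modulus $\tilde\omega(r)=\sum_j\omega(\lambda^jr)$ is in general \emph{not} Dini:
\[
\int_0^1\frac{\tilde\omega(r)}{r}\,dr=\sum_{j\geqslant0}\int_0^{\lambda^j}\frac{\omega(s)}{s}\,ds<\infty
\quad\iff\quad
\int_0^1\frac{\omega(s)}{s}\log\frac1s\,ds<\infty,
\]
and this fails, for example, for $\omega(s)=1/\log^2(1/s)$ near $0$. With a tangent modulus of that size, the distortion series $\sum_n\tilde\omega(Ce^{-cn})$ behaves like $\sum_m m\,\omega(e^{-m})$ and diverges. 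You would then need a log-Dini hypothesis, which is exactly the kind of extra regularity the theorem avoids.

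The missing point is how domination enters the recursion. In each graph-transform step, the inherited modulus is multiplied by the Grassmannian contraction factor $\kappa<1$ (\Cref{prop:graph-transform,cor:iterating-dini}). So an error injected $j$ steps in the past is damped by $\kappa^j$, and the accumulated modulus is $\sum_j\kappa^jKC_2\,\omega(\cdot)\leqslant C_3\,\omega(M(x)r)$ with $M(x)$ tempered (\Cref{lem:Dini-Tn,cor:plaque-Dini}). Thus the plaque tangents have the \emph{same} modulus $\omega$ up to a tempered rescaling. This is what makes your bound $|\log J^F(f^{-n}y)-\log J^F(f^{-n}z)|\lesssim\omega(Ce^{-cn})$ true and summable.

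A smaller point concerns your normalizer $L$ (the paper's $R$). Bounded distortion does not make $\log L$ integrable: the distortion constants depend on $x$, and nothing bounds $m_x(\xi(x))$ from below in an integrable way. You do not need integrability, though. Since $q=\frac{L\circ f}{L\,J^F}\leqslant1$, the positive part of $\log(L\circ f/L)$ is at most $\log^+J^F\in L^1$. The Ledrappier--Strelcyn coboundary lemma (\Cref{lem:coboundary-integrability}) then gives $\int\log(L\circ f/L)\,d\mu=0$, which is what the paper does.

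Likewise, in Step 2 the identity $\rho(y)/\rho(z)=\Delta(y,z)$ is not a formal consequence of invariance. For $\phi=d\mu^\xi_x/d\nu_x$ and $A=(f^{-1}\xi)(x)$, invariance only gives $\log\phi\circ f-\log\phi=\log\nu_x(A)-\log\mu^\xi_x(A)$. The same coboundary lemma applies here, since the positive part is at most $-\log\mu^\xi_x(A)$, whose integral is $h^k_\mu(f)<\infty$. That yields the entropy formula directly; alternatively, simply cite \cite{WWZ}.
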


The implication from absolute continuity to the partial entropy formula is known for $C^1$ diffeomorphisms with dominated splitting \cite{WWZ}. The new content of this theorem is the converse under Dini regularity. 
The central issue is bounded distortion. Along a backward orbit in $W^k$, distances decay exponentially, so the variation of the derivative is governed by terms of the form $\omega(Ce^{-cn})$. The Dini condition is precisely what makes the resulting series summable.

To implement this observation, we first construct plaque families tangent to $F$ and choose their radii along typical backward orbits in a tempered way. A graph-transform argument then gives Dini control of the tangent distributions of these plaques. This control yields convergence and uniform bounds for the associated Jacobian-ratio products. Finally, following the Ledrappier--Young scheme, we use the limiting Jacobian cocycle to construct candidate conditional densities and show that equality in the partial entropy formula forces these densities to coincide with the Rokhlin disintegration of $\mu$. 

Next, we record the notion of the SRB measure used throughout the paper. Let $\mu$ be an $f$-invariant probability measure with a dominated splitting $T_{\operatorname{supp}(\mu)}M=E\oplus F$, where $F$ is the Oseledets bundle with respect to all positive Lyapunov exponents, and $E$ is the Oseledets bundle with respect to all non-positive Lyapunov exponents. Let $W^u$ denote the Pesin unstable manifolds (see \cite[Proposition 8.9]{ABC2011}). 

\begin{definition}\label{def:srb}
Under above assumptions, we call $\mu$ an SRB measure if its conditional measures with respect to a measurable partition subordinate to $W^u$ are absolutely continuous with respect to the leafwise Riemannian volume on $W^u$.
\end{definition}

\Cref{main-thm-intro} establishes the equivalence between this geometric definition and the corresponding entropy formula.

\subsection{Gibbs-{\it u} states}
We next apply 
\Cref{main-thm-intro}
to $C^{1+\mathrm{Dini}}$ partially
hyperbolic attractors. Let $U\subset M$ be a nonempty open set such that
$f(\overline U)\subset U$. The compact invariant set
\[
    \Lambda=\bigcap_{n\geqslant0}f^n(\overline U)
\]
is the attractor determined by the trapping region $U$. We say that
$\Lambda$ is partially hyperbolic if it admits a $Df$-invariant dominated
splitting
\[
    T_\Lambda M=E^s\oplus E^c\oplus E^u,
\]
where $E^s$ is uniformly contracting and $E^u$ is uniformly expanding.
We write $E^{cu}=E^c\oplus E^u$ and denote by $\mathcal F^u$ the strong
unstable foliation tangent to $E^u$.

Following Pesin and Sinai \cite{PesinSinai1982}, an invariant probability
measure supported on $\Lambda$ is called a Gibbs-$u$ state if its conditional
measures with respect to a measurable partition subordinate to $\mathcal F^u$
are absolutely continuous with respect to the corresponding leafwise
Riemannian volumes. We denote the space of Gibbs-$u$ states of $f$ on
$\Lambda$ by $\mathrm{Gibbs}^u(f,\Lambda)$. For the standard
properties of Gibbs-$u$ states in the $C^{1+\alpha}$ setting, see
\cite[Section~11.2]{BDV2005}; see also \cite{Dolgopyat2004Limit}.

Let $\mathcal M_f(\Lambda)$ denote the space of $f$-invariant Borel
probability measures supported on $\Lambda$. For
$\mu\in\mathcal M_f(\Lambda)$, write $u=\dim E^u$, and then $h^u_\mu(f)$ is exactly the unstable entropy associated with $\mathcal F^u$.
Combining \Cref{main-thm-intro} with the standard $C^1$ entropy arguments, see Crovisier--Yang--Zhang \cite{CrovisierYangZhang2020}, Yang \cite{Yang21}, and
Hua--Yang--Yang \cite{HuaYangYang2020} for instance,
we immediately obtain the following corollary.

\begin{corollary}\label{cor:gibbs-u-states}
Let $f\in\mathrm{Diff}^{1+\mathrm{Dini}}(M)$ and let $\Lambda$ be a
partially hyperbolic attractor. Then $\mathrm{Gibbs}^u(f,\Lambda)$ is a
nonempty weak-$*$ compact convex subset of $\mathcal M_f(\Lambda)$,
and the extreme points
of $\mathrm{Gibbs}^u(f,\Lambda)$ are precisely the ergodic Gibbs-$u$ states.
Moreover, for every $\mu\in\mathcal M_f(\Lambda)$,
\[
    \mu\in\mathrm{Gibbs}^u(f,\Lambda)
    \quad\Longleftrightarrow\quad
    h^u_\mu(f)
    =
    \int_\Lambda
    \log\left|\det\left(Df_x|_{E^u(x)}\right)\right|\,d\mu(x).
\]
\end{corollary}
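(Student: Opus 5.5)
The plan is to first establish the entropy characterization, and then derive the structural statements (nonemptiness, compactness, convexity, extreme points) from it using the $C^1$ entropy machinery. Write $\phi^u(x)=\log|\det(Df_x|_{E^u(x)})|$. This is a continuous function on $\Lambda$, because $E^u$ varies continuously.

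\emph{Step 1: the equivalence for ergodic $\mu$.} Apply \Cref{main-thm-intro} with $F=E^u$, $E=E^s\oplus E^c$ and $k=u$. The splitting $E\oplus E^u$ is dominated on $\Lambda\supset\operatorname{supp}\mu$. Since $E^u$ is uniformly expanding, every exponent along $E^u$ is positive. By the standard comparison between uniform expansion and the domination constants, $W^k(x)$ coincides with the strong unstable leaf $\mathcal F^u(x)$ for $\mu$-a.e.\ $x$: the strong unstable leaf lies in $W^k(x)$ by exponential backward contraction, and $W^k(x)$ is a $u$-dimensional manifold tangent to $E^u$, so the two agree locally. Hence partitions subordinate to $W^k$ and to $\mathcal F^u$ can be taken to be the same, and $h^k_\mu=h^u_\mu$. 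By Oseledets and Birkhoff, $\sum_{i\le u}\lambda_i=\int\phi^u\,d\mu$. The corollary's equivalence in the ergodic case is then exactly \Cref{main-thm-intro}.

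\emph{Step 2: non-ergodic $\mu$.} Take the ergodic decomposition $\mu=\int\mu_\theta\,d\hat\mu(\theta)$. The unstable entropy is affine and in fact satisfies $h^u_\mu=\int h^u_{\mu_\theta}\,d\hat\mu$ (Hua--Yang--Yang). The partial Ruelle inequality gives $h^u_{\mu_\theta}\le\int\phi^u\,d\mu_\theta$ for every $\theta$. Therefore equality for $\mu$ forces equality for $\hat\mu$-a.e.\ $\theta$. Conversely, the Gibbs-$u$ property passes to a.e.\ ergodic component: the $\mathcal F^u$-partition can be chosen so that its atoms refine the ergodic decomposition, because the ergodic components are constant along unstable leaves up to null sets (a Hopf-type argument via backward Birkhoff averages). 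The Gibbs-$u$ property is also preserved by convex integration, since conditionals of $\mu$ are mixtures of conditionals of the components on a common subordinate partition. Combining these, $\mu$ is a Gibbs-$u$ state if and only if a.e.\ ergodic component is, which holds if and only if a.e.\ component satisfies the formula, which holds if and only if $\mu$ does.

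\emph{Step 3: structure of $\mathrm{Gibbs}^u$.} By Step 2,
\[
    \mathrm{Gibbs}^u(f,\Lambda)=\Bigl\{\mu\in\mathcal M_f(\Lambda): h^u_\mu(f)-\int\phi^u\,d\mu\geqslant0\Bigr\}.
\]
The map $\mu\mapsto h^u_\mu$ is affine, and it is upper semicontinuous for $C^1$ partially hyperbolic systems (Yang; Crovisier--Yang--Zhang). The map $\mu\mapsto\int\phi^u\,d\mu$ is continuous. Hence the set above is closed and convex, and so compact in the weak-$*$ topology. Because the entropy is affine and the set is a face, its extreme points are ergodic: if an extreme Gibbs-$u$ state were not ergodic, Step 2 would split it into Gibbs-$u$ components. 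Conversely, ergodic measures are extreme in $\mathcal M_f$, hence extreme in any subset containing them.

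For nonemptiness, take Lebesgue measure $m_D$ on a disc $D$ inside a strong unstable leaf in $U$, and form the Cesàro averages $\frac1n\sum_{j<n}f^j_*m_D$. The standard volume lemma, $h^u\geqslant\int\phi^u$ for accumulation points, follows from the $C^1$ argument of Crovisier--Yang--Zhang and uses only uniform expansion along $\mathcal F^u$ and upper semicontinuity, not distortion. Combined with Ruelle's inequality, this gives equality, so any limit point is a Gibbs-$u$ state by Step 2.

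\emph{Main obstacle.} I expect the delicate point to be Step 2: the passage between $\mu$ and its ergodic components for the geometric Gibbs-$u$ property, and in particular producing a single subordinate partition compatible with the ergodic decomposition. The entropy side is cleaner, since it relies only on the affine property plus the Ruelle inequality. If the partition issue becomes messy, the first thing I would try is to avoid it entirely: prove the equivalence only for ergodic measures, and obtain the general case by noting that the condition "entropy equals $\int\phi^u$" is the accepted definition in the cited $C^1$ works, so that their arguments transfer verbatim once the ergodic equivalence holds.
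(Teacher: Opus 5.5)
Your proposal is correct and follows the route the paper intends: the paper only remarks that the corollary follows by combining the Main Theorem (with $F=E^u$, $k=\dim E^u$) with the $C^1$ entropy theory of Crovisier--Yang--Zhang, Yang and Hua--Yang--Yang, and your Steps 1--3 carry out exactly that combination, including the passage of the geometric Gibbs-$u$ property to and from ergodic components that the paper leaves implicit (your fallback would not avoid that step, since the $C^1$ works never deal with conditional measures). The only imprecision is in Step 1: a point of $\mathcal F^u(x)$ lies in $W^k(x)$ because it contracts backwards at the $\mu$-typical rate $\lambda_k$, which follows from continuity of $E^u$ along the converging backward orbits (or from your tangency argument plus iteration), and the uniform expansion rate of $E^u$ alone would not give this.
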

\subsection{Mostly expanding attractors}
In this section, we establish the existence of the SRB measures for $C^{1+\mathrm{Dini}}$ mostly expanding partially hyperbolic attractors

There are two related notions in the literature under the name ``mostly expanding.'' In the original work of Alves--Bonatti--Viana \cite{ABV}, the hypothesis is formulated for a splitting $E^{ss}\oplus E^{cu}$, with $E^{ss}$ uniformly contracting, and requires nonuniform expansion along $E^{cu}$ on a set of positive Lebesgue measure. Here we use the Gibbs-$u$ formulation for partially hyperbolic systems, as in \cite{AnderssonVasquez2018,AnderssonVasquez2020}.

We say that the center is mostly expanding in the Gibbs-$u$ sense if, for every Gibbs-$u$ state $\nu$, all Lyapunov exponents along $E^c$ are positive at $\nu$-almost every point.  A standard compactness argument for the smallest center exponent gives the following uniform consequence: there exist $N_0\in\mathbb N$ and $c_0>0$ such that
\[
    \int \log m(Df^{N_0}|_{E^c})\,d\nu\geqslant c_0
\]
for every Gibbs-$u$ state $\nu$ of $f|_\Lambda$, where
\[
    m(A)=\inf_{\|v\|=1}\|Av\|
\]
denotes the conorm; see \cite[Lemma~4.1]{AnderssonVasquez2018}. This formulation is $C^1$-open; see \cite{AnderssonVasquez2020,Yang21,Yang2023MostlyExpanding}.

\begin{corollary}\label{coro2}
Let $f\in\mathrm{Diff}^{1+\mathrm{Dini}}(M)$ and let $\Lambda$ be a partially hyperbolic attractor whose center is mostly expanding in the Gibbs-$u$ sense. Then $f$ admits an ergodic SRB measure supported on $\Lambda$.
\end{corollary}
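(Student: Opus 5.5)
We argue as in Andersson--V\'asquez: take an ergodic Gibbs-$u$ state, show it is hyperbolic along $E^{cu}$, and upgrade it to an SRB measure through \Cref{main-thm-intro}.

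By \Cref{cor:gibbs-u-states}, $\mathrm{Gibbs}^u(f,\Lambda)$ is nonempty, compact and convex, and its extreme points are ergodic Gibbs-$u$ states. By Krein--Milman we can therefore pick an ergodic Gibbs-$u$ state $\mu$. Since the center is mostly expanding in the Gibbs-$u$ sense, all Lyapunov exponents of $\mu$ along $E^c$ are positive. The exponents along $E^u$ are positive by uniform expansion, and those along $E^s$ are negative. Hence the splitting
\[
    T_\Lambda M=E^s\oplus E^{cu},\qquad E^{cu}=E^c\oplus E^u,
\]
is a dominated splitting on $\operatorname{supp}\mu\subset\Lambda$. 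Here $E^{cu}$ dominates $E^s$, $E^{cu}$ is exactly the Oseledets bundle of positive exponents, and $k=\dim E^{cu}$ satisfies $\lambda_k(\mu,f)>0$. Thus $W^k=W^u$ is the Pesin unstable manifold, and by \Cref{main-thm-intro}, $\mu$ is SRB precisely when
\[
    h^{k}_\mu(f)=\int\log|\det Df|_{E^{cu}}|\,d\mu .
\]

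The issue is that a Gibbs-$u$ state only satisfies the formula along $E^u$, not along $E^{cu}$. We therefore do not take an arbitrary Gibbs-$u$ state. Instead we construct a candidate directly as a limit of the averages
\[
    \mu_n=\frac1n\sum_{j<n}f^j_*\mathrm{Leb}_D,
\]
where $D$ is a disc tangent to the $E^{cu}$-cone inside $U$.

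We then proceed in three steps.
\begin{enumerate}
\item Every limit point of $(\mu_n)$ is a Gibbs-$u$ state. This follows from the $C^1$ entropy arguments cited before \Cref{cor:gibbs-u-states}, namely upper semicontinuity of $\mu\mapsto h^u_\mu-\int\log|\det Df|_{E^u}|$ together with the characterization in \Cref{cor:gibbs-u-states}.
\item Apply the uniform bound $\int\log m(Df^{N_0}|_{E^c})\,d\nu\ge c_0$, valid for all Gibbs-$u$ states $\nu$. By a Pliss-lemma argument, a positive-density set of times are $E^{cu}$-hyperbolic times for a positive proportion of points of $D$.
\item At hyperbolic times, use Dini bounded distortion: backward contraction along $E^{cu}$ makes $\sum\omega(Ce^{-cn})<\infty$. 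This gives iterates $f^n(D)$ containing $cu$-discs of uniform size with uniformly bounded density.
\end{enumerate}
Accumulating these discs yields a limit measure $\mu$ that has a component whose disintegration along $cu$-discs is absolutely continuous. Equivalently, this component satisfies the partial entropy formula
\[
    h^{cu}_\mu=\int\log|\det Df|_{E^{cu}}|\,d\mu,
\]
obtained by the Ledrappier--Young/volume-lemma argument as in \cite{WWZ}. Passing to an ergodic component gives the required ergodic SRB measure via \Cref{main-thm-intro}.

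The main obstacle is step (3) together with the passage to the limit. We need to show that absolute continuity along the uniform $cu$-discs survives in the weak-$*$ limit, and that the relevant ergodic component still has all center exponents positive. The second point is automatic, since every ergodic component of a Gibbs-$u$ state is Gibbs-$u$ (\Cref{cor:gibbs-u-states}, extreme points). For the first point, our first attempt is to express the limit of the hyperbolic-time parts through the entropy characterization rather than geometrically. Specifically, we would apply Ruelle's inequality together with an upper semicontinuity of $h^{cu}$ along sequences carried by uniform discs with bounded distortion.
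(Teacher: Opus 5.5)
Your argument has a genuine gap, and it sits exactly where you place the ``main obstacle'': nothing in the proposal produces an invariant measure satisfying $h^{cu}_\mu(f)=\int\log|\det Df|_{E^{cu}}|\,d\mu$, so \Cref{main-thm-intro} is never legitimately invoked. Steps (1) and (2) can plausibly be justified from the cited $C^1$ entropy arguments and the Pliss lemma. Step (3) and the passage to the limit, however, are the entire Alves--Bonatti--Viana machinery, and you supply none of it in Dini regularity. Three pieces are missing. First, you need a uniform Dini control of the tangent spaces of $f^n(D)$. In the H\"older setting this curvature bound uses the freedom to take the H\"older exponent small, so that possible contraction along $E^c$ is absorbed by domination. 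With a fixed modulus $\omega$, the rescaled argument $r/(m(B_x)-\varepsilon)$ seen in \Cref{prop:graph-transform} must be handled by a separate argument. Second, you need an Arzel\`a--Ascoli argument showing that the limit of the hyperbolic-time parts is carried by discs inside Pesin unstable manifolds with absolutely continuous conditionals. Third, you need an argument turning this nonzero absolutely continuous part into an ergodic SRB component.

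The entropy shortcut you suggest does not work as stated. $h^{cu}$ is defined only for invariant measures, while $\frac1n\sum_{j<n}f^j_*\mathrm{Leb}_D$ and its hyperbolic-time parts are not invariant. So there is no sequence of partial entropies whose $\limsup$ could be compared with $h^{cu}_\mu(f)$. \Cref{prop:partial-entropy-semicontinuity} concerns ergodic invariant measures $\mu_n$ of maps $f_n\to f$, and it is only useful when the $cu$-entropy formula is already known for those $\mu_n$. A lower bound for $h^{cu}$ of limits of disc averages would be a separate entropy estimate of volume-lemma type, which you neither state nor prove.

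The idea you are missing is to vary the map rather than the measure. The paper approximates $f$ in the $C^1$ topology by $C^\infty$ diffeomorphisms $f_n$ whose attractors stay mostly expanding. This robustness holds because limits of Gibbs-$u$ states of nearby maps satisfy the $u$-entropy formula, hence are Gibbs-$u$ states of $f$ by \Cref{main-thm-intro} with $F=E^u$, so the bound $\int\log m(Dg^{N_0}|_{E^c_g})\,d\nu\geqslant c_0/2$ persists. The smooth theory then gives ergodic SRB measures $\mu_n$ of $f_n$, which satisfy $h^{cu}_{\mu_n}(f_n)=\int\log|\det Df_n|_{E^{cu}_n}|\,d\mu_n$ by Ledrappier--Young. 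Any weak-$*$ limit $\mu$ is a Gibbs-$u$ state of $f$, so its ergodic components have center exponents at least $c_0/N_0$. That is exactly the hypothesis that makes \Cref{prop:partial-entropy-semicontinuity} applicable with $F=E^{cu}$, giving $h^{cu}_\mu(f)\geqslant\int\log|\det Df|_{E^{cu}}|\,d\mu$. The partial Ruelle inequality gives equality, affinity passes it to an ergodic component, and \Cref{main-thm-intro} applied to $E^s\oplus E^{cu}$ concludes. This confines all Dini-specific geometry to the proof of \Cref{main-thm-intro}. Your direct route would instead require redoing the hyperbolic-times construction in Dini regularity in full; as written it is a program, not a proof.
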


The proof combines smooth approximation with the upper semicontinuity of partial entropy \cite{TY2}. We approximate $f$ in the $C^1$ topology by smooth diffeomorphisms whose continuations of $\Lambda$ remain mostly expanding, and choose an ergodic SRB measure for each approximating system using the smooth theory \cite{ABV,AnderssonVasquez2018}. A weak-$*$ limit satisfies the entropy formula along $E^u$ and is therefore a Gibbs-$u$ state by \Cref{main-thm-intro}. The mostly expanding center then gives uniform positivity along $E^c$, while the upper semicontinuity of partial entropy transfers the center-unstable entropy formula to the limit. Applying \Cref{main-thm-intro} to the dominated bundle $E^{cu}$ completes the proof.

Related existence and stability results for partially hyperbolic attractors, including mostly contracting and mostly non-expanding mechanisms. See, for instance, \cite{BV2000,QiuHao,ZERONOISE}. The methods developed here can also be adapted to these neighboring settings.

The paper is organized as follows. \Cref{sec:dini} develops the elementary calculus of Dini moduli of bundle homomorphisms. \Cref{sec:geometric} constructs the plaque families and proves the bounded-distortion estimates. \Cref{sec:proof-main} proves \Cref{main-thm-intro}. \Cref{sec:application} establishes \Cref{coro2}. Finally, \Cref{sec:c1-example} presents the $C^1$ counterexamples.

\section{Dini continuity}\label{sec:dini}

\subsection{Definition of Dini continuity}

\begin{definition}
A function $\omega\colon [0,+\infty)\to[0,+\infty)$ is a modulus of continuity if it is non-decreasing, continuous, and concave, with $\omega(0)=0$ and $\omega(r)>0$ for $r>0$. A modulus of continuity $\omega$ is called Dini if
\[
    \int_0^1\frac{\omega(r)}{r}\,dr<\infty.
\]
\end{definition}

\begin{lemma}\label{lem:basic-modulus}
Let $\omega$ be a modulus of continuity. Then, for every $r\geqslant0$,
\[
\begin{array}{lll}
    t\omega(r)\leqslant \omega(tr)\leqslant \omega(r), & & 0\leqslant t\leqslant 1,\\[2mm]
    \omega(r)\leqslant \omega(tr)\leqslant t\omega(r), & & t\geqslant 1.
\end{array}
\]
Consequently,
\[
    \min\{1,t\}\omega(r)\leqslant \omega(tr)\leqslant \max\{1,t\}\omega(r),
    \qquad t,r\geqslant0.
\]
\end{lemma}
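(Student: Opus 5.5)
The argument uses only monotonicity, concavity, and $\omega(0)=0$. We split into the two ranges of $t$ and then combine them.

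\emph{Case $0\leqslant t\leqslant 1$.} Since $tr\leqslant r$ and $\omega$ is non-decreasing, $\omega(tr)\leqslant\omega(r)$. For the lower bound, write $tr=t\cdot r+(1-t)\cdot 0$ as a convex combination. Concavity and $\omega(0)=0$ then give
\[
    \omega(tr)\geqslant t\,\omega(r)+(1-t)\,\omega(0)=t\,\omega(r).
\]
At the endpoints $t=0$ or $r=0$ both sides vanish, so there is nothing extra to check.

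\emph{Case $t\geqslant1$.} Monotonicity gives $\omega(r)\leqslant\omega(tr)$, since $r\leqslant tr$. For the upper bound, apply the first case to the point $tr$ and the factor $s=1/t\in(0,1]$: $\omega(r)=\omega(s\cdot tr)\geqslant s\,\omega(tr)$. Multiplying by $t$ gives $\omega(tr)\leqslant t\,\omega(r)$. This covers $r=0$ as well.

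\emph{Combined inequality.} For $t\leqslant1$ we have $\min\{1,t\}=t$ and $\max\{1,t\}=1$. For $t\geqslant1$ we have $\min\{1,t\}=1$ and $\max\{1,t\}=t$. In each range the two displayed chains are exactly the stated bound
\[
    \min\{1,t\}\omega(r)\leqslant\omega(tr)\leqslant\max\{1,t\}\omega(r).
\]

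The only step with any content is the concavity estimate $\omega(tr)\geqslant t\,\omega(r)$. Once $tr$ is written as a convex combination of $r$ and $0$ and $\omega(0)=0$ is used, it is immediate, so no real obstacle is expected.
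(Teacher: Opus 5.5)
Your proof is correct and is essentially the paper's argument. The paper notes that concavity and $\omega(0)=0$ make $r\mapsto\omega(r)/r$ non-increasing, which is exactly your convex-combination inequality $\omega(tr)\geqslant t\,\omega(r)$ for $0\leqslant t\leqslant1$ (with your $s=1/t$ rescaling being the same fact read in the other direction), and monotonicity supplies the remaining bounds in both treatments.
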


\begin{proof}
Since $\omega$ is concave and $\omega(0)=0$, the function $r\mapsto \omega(r)/r$ is non-increasing on $(0,\infty)$. The two displayed estimates follow immediately from this fact and from the monotonicity of $\omega$.
\end{proof}

Let $(X,d)$ and $(Y,\rho)$ be metric spaces. For a map $\phi\colon X\to Y$, define
\[
    \omega_\phi(r)=\sup_{d(x_1,x_2)\leqslant r}\rho(\phi(x_1),\phi(x_2)).
\]

\begin{definition}
A map $\phi\colon X\to Y$ is Dini-continuous if there exists a Dini modulus of continuity $\omega$ such that $\omega_\phi(r)\leqslant \omega(r)$ for all $r\geqslant0$.

Given a modulus of continuity $\omega$, set
\[
    C^\omega(X,Y)=\left\{\phi\colon X\to Y:
    \begin{array}{l}
    \text{there exists } C\geqslant0 \text{ such that}\\
    \omega_\phi(r)\leqslant C\omega(r) \text{ for all } r\geqslant0
    \end{array}\right\}.
\]
For $\phi\in C^\omega(X,Y)$, define
\[
    [\phi]_\omega=\inf\{C\geqslant0: \omega_\phi(r)\leqslant C\omega(r)\text{ for all }r\geqslant0\}.
\]
We also write
\[
    C^{\mathrm{Dini}}(X,Y)=\bigcup_\omega C^\omega(X,Y),
\]
where the union is taken over all Dini moduli $\omega$.
\end{definition}

\begin{remark}
Only the behavior of the modulus near $0$ is relevant.
\end{remark}
\begin{remark}
The H\"older continuity implies the Dini continuity, since $\omega(r)=r^\alpha$ is Dini for every $\alpha>0$.
\end{remark}
\begin{proposition}\label{prop:dini-summability}
Let $\omega$ be a Dini modulus of continuity. Then, for every $0<\lambda<1$,
\[
    \sum_{i=1}^{\infty}\omega(\lambda^i)<\infty.
\]
\end{proposition}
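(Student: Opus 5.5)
We compare the series with the integral by splitting $(0,1]$ into the geometric shells $[\lambda^{i+1},\lambda^{i}]$ for $i\geqslant 0$. Since these intervals cover $(0,1]$ with overlaps only at endpoints,
\[
    \int_0^1\frac{\omega(r)}{r}\,dr=\sum_{i=0}^{\infty}\int_{\lambda^{i+1}}^{\lambda^i}\frac{\omega(r)}{r}\,dr .
\]
The task is therefore to bound each summand from below by a fixed multiple of $\omega(\lambda^{i+1})$ (or of $\omega(\lambda^{i})$).

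On the shell $[\lambda^{i+1},\lambda^i]$ we use the monotonicity of $\omega$ from the definition of a modulus of continuity. This gives $\omega(r)\geqslant \omega(\lambda^{i+1})$ there, and hence
\[
    \int_{\lambda^{i+1}}^{\lambda^i}\frac{\omega(r)}{r}\,dr\geqslant \omega(\lambda^{i+1})\int_{\lambda^{i+1}}^{\lambda^i}\frac{dr}{r}=\log(1/\lambda)\,\omega(\lambda^{i+1}).
\]
Summing over $i\geqslant0$ yields
\[
    \sum_{i=1}^\infty\omega(\lambda^i)\leqslant \frac{1}{\log(1/\lambda)}\int_0^1\frac{\omega(r)}{r}\,dr<\infty,
\]
which is the claim. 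The constant $\log(1/\lambda)$ is positive precisely because $0<\lambda<1$.

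There is essentially no obstacle; only monotonicity and nonnegativity of $\omega$ are used. Concavity and \Cref{lem:basic-modulus} are needed only for the converse direction, which is not required here. If one prefers to compare with $\omega(\lambda^i)$ instead, one can use the inequality $\omega(\lambda r)\geqslant \lambda\omega(r)$ from \Cref{lem:basic-modulus} on each shell, at the cost of an extra factor $\lambda^{-1}$ in the constant. The shell-based argument above avoids this.
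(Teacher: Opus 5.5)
Your proof is correct and is essentially the paper's argument: both split $(0,1]$ into the shells $[\lambda^{i},\lambda^{i-1}]$ and use only the monotonicity of $\omega$ to bound each shell integral below by a fixed multiple of $\omega(\lambda^i)$. The only difference is that the paper uses the cruder bound $1/r\geqslant\lambda^{-(i-1)}$ and gets the constant $1/(1-\lambda)$, whereas you evaluate $\int dr/r$ exactly and get the slightly sharper constant $1/\log(1/\lambda)$.
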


\begin{proof}
Since $\omega$ is non-decreasing, for every $i\geqslant1$,
\[
    \omega(\lambda^i)
    \leqslant \frac{1}{1-\lambda}\int_{\lambda^i}^{\lambda^{i-1}}\frac{\omega(r)}{r}\,dr.
\]
Summing over $i$ gives the result from the Dini condition.
\end{proof}

\begin{remark}
The convergence of $\sum_i\omega(\lambda^i)$ is equivalent to the Dini condition. This is the summability property used in the distortion estimates below. See \cite[Proposition~2.5]{Boukhecham2024}, \cite{FJ2001}, or \cite{LZ2000} for instance.
\end{remark}

\subsection{Modulus of continuity for bundle homomorphisms}

We now define $C^{1+\mathrm{Dini}}$ maps on manifolds. The definition is analogous to that of $C^{1+\alpha}$ maps. We follow the approach of \cite{Anosov67b,Sarig2013}.

Let $M$ and $M'$ be smooth Riemannian manifolds without boundary, and assume that $M$ is compact. Let $f\colon M\to M'$ be a continuous map. Let $\pi\colon \mathscr E\to M$ and $\pi'\colon \mathscr E'\to M'$ be smooth vector bundles equipped with inner products, and let $\phi\colon \mathscr E\to\mathscr E'$ be a continuous bundle homomorphism covering $f$; that is,
\[
    \pi'\circ\phi=f\circ\pi,
\]
and $\phi_x=\phi|_{\mathscr E_x}\colon\mathscr E_x\to\mathscr E'_{fx}$ is linear for every $x\in M$. We consider $\phi$ from a different point of view. Define the vector bundle $L(\mathscr E,\mathscr E',f)$ over $M$ whose fiber at $x\in M$ consists of the linear maps from $\mathscr E_x$ to $\mathscr E'_{fx}$. Then $\phi$ is naturally a section of $L(\mathscr E,\mathscr E',f)$. To define its modulus of continuity, we must compare $\phi_x$ and $\phi_y$ for different points $x,y\in M$, even though these maps have different domains and ranges.

By local triviality, every $x\in M$ has an open neighborhood $D$ and a smooth fiber-coordinate map
\[
    \theta_D\colon \mathscr E|_D\to\mathbb R^{\dim\mathscr E}
\]
whose restriction to each fiber is a linear isometry. Let $\mathscr D$ be a finite cover of $M$ by such neighborhoods, and let $\varepsilon(\mathscr D)$ be a Lebesgue number of $\mathscr D$. Thus, if $d(x,y)\leqslant\varepsilon(\mathscr D)$, then $x$ and $y$ lie in a common element of $\mathscr D$. Similarly, choose fiber-coordinate maps $\theta_{D'}$ and a finite open cover $\mathscr D'$ of $f(M)\subset M'$, with a Lebesgue number $\varepsilon(\mathscr D')$. Choose $\varepsilon'>0$ so small that
\[
    d(x,y)\leqslant\varepsilon'
    \quad\Longrightarrow\quad
    d(fx,fy)\leqslant\varepsilon(\mathscr D').
\]
Then $fx$ and $fy$ lie in a common element of $\mathscr D'$. Set
\[
    \varepsilon=\min\{\varepsilon(\mathscr D),\varepsilon'\}.
\]
For $z\in D$, write $\theta_{D,z}=\theta_D|_{\mathscr E_z}$, and use the analogous notation for the target bundle. Instead of comparing $\phi_x$ directly with $\phi_y$, we compare the Euclidean linear maps
\[
    \theta_{D',fx}\circ\phi_x\circ\theta_{D,x}^{-1}
    \quad\text{and}\quad
    \theta_{D',fy}\circ\phi_y\circ\theta_{D,y}^{-1}.
\]

\begin{definition}
Let $\phi\colon\mathscr E\to\mathscr E'$ be a bundle homomorphism as above. For $0\leqslant r\leqslant\varepsilon$, define its modulus of continuity by
\[
    \omega_\phi(r)
    \coloneqq
    \sup_{\substack{D\in\mathscr D,\ D'\in\mathscr D'\\
                     x,y\in D,\ fx,fy\in D'\\
                     d(x,y)\leqslant r}}
    \left\|
        \theta_{D',fx}\circ\phi_x\circ\theta_{D,x}^{-1}
        -
        \theta_{D',fy}\circ\phi_y\circ\theta_{D,y}^{-1}
    \right\|.
\]
For $r\geqslant\varepsilon$, extend this definition by setting
\[
    \omega_\phi(r)=\omega_\phi(\varepsilon).
\]

Given a modulus of continuity $\omega$, we say that $\phi$ is $\omega$-continuous if there exists $C\geqslant0$ such that
\[
    \omega_\phi(r)\leqslant C\omega(r),
    \qquad r\geqslant0.
\]
If $\phi$ is $\omega$-continuous for some Dini modulus $\omega$, then we say that $\phi$ is Dini-continuous.

Given a modulus of continuity $\omega$, define
\[
\begin{aligned}
    C^{1+\omega}(M,M')
    \coloneqq
    \bigl\{f\in C^1(M,M'):\;&
    Df\colon TM\to TM'
    \text{is }\omega\text{-continuous}\bigr\}.
\end{aligned}
\]
and
\[
    C^{1+\mathrm{Dini}}(M,M')
    =\bigcup_\omega C^{1+\omega}(M,M'),
\]
where the union is taken over all Dini moduli $\omega$. The spaces $\mathrm{Diff}^{1+\omega}(M)$ and $\mathrm{Diff}^{1+\mathrm{Dini}}(M)$ are defined analogously.
\end{definition}

\begin{remark}
\begin{enumerate}[label=(\roman*),leftmargin=*]
    \item The chosen fiber-coordinate maps identify nearby fibers and hence give a local coordinate comparison for the section $\phi$. For $r\leqslant\varepsilon$, the function $\omega_\phi(r)$ is the corresponding coordinate modulus.

    \item The numerical function $\omega_\phi$ depends on the chosen fiber coordinates and need not be equivalent, up to pure multiplicative constants, to the modulus obtained from another choice. More precisely, let $\widetilde\omega_\phi$ be defined using another pair of finite smooth trivializing families. Then there exists $C\geqslant1$ such that, for all sufficiently small $r$,
    \[
    \begin{aligned}
        \widetilde\omega_\phi(r)
        &\leqslant
        C\bigl(\omega_\phi(r)+\max\{\|\phi\|,1\}\cdot(r+\omega_f(r))\bigr),\\
        \omega_\phi(r)
        &\leqslant
        C\bigl(\widetilde\omega_\phi(r)+\max\{\|\phi\|,1\}\cdot(r+\omega_f(r))\bigr),
    \end{aligned}
    \]
    where $\|\phi\|=\sup_{x\in M}\|\phi_x\|$. Indeed, after passing to a common finite refinement, the two local matrix representations satisfy
    \[
        \widetilde\Phi(z)=Q(fz)\Phi(z)P(z)^{-1},
    \]
    where $P$ and $Q$ are smooth transition matrices whose norms, inverse norms, and Lipschitz constants are uniformly bounded. Expanding $\widetilde\Phi(x)-\widetilde\Phi(y)$ gives the first estimate, and interchanging the two coordinate systems gives the second. See Lemma \ref{lem:bundle-Dini-ineq} below for more details.

    If $f$ is Lipschitz, then $\omega_f(r)\leqslant Lr$ for some $L>0$. Moreover, by \Cref{lem:basic-modulus}, every modulus $\omega$ satisfies
    \[
        \omega(r)\geqslant\omega(1)r,
        \qquad 0\leqslant r\leqslant1.
    \]
    Hence, after enlarging the multiplicative constant to cover the remaining values of $r$, $\phi$ is $\omega$-continuous for one choice of fiber coordinates if and only if it is $\omega$-continuous for the other. In particular, the definitions of $C^{1+\omega}$ and $C^{1+\mathrm{Dini}}$ are independent of the chosen trivializations, since every $C^1$ map on the compact manifold $M$ is Lipschitz.

    \item The same argument applies when the fiber-coordinate maps are smooth fiberwise linear isomorphisms rather than isometries; only the uniform constant $C$ changes. Choosing isometries simplifies the estimates below.
\end{enumerate}
\end{remark}

\subsection{Some useful inequalities}

The following two inequalities are adapted from \cite[p.~821]{Anosov67b}.

\begin{lemma}\label{lem:bundle-Dini-ineq}
Let $\mathscr E$, $\mathscr E'$, and $\mathscr E''$ be vector bundles over $M$, $M'$, and $M''$, respectively. Assume $M$ is compact. Let
\[
    \phi\colon\mathscr E\to\mathscr E',
    \qquad
    \psi\colon\mathscr E'\to\mathscr E''
\]
be bundle homomorphisms covering $f\colon M\to M'$ and $g\colon M'\to M''$, respectively. Then, for all sufficiently small $r$, the following estimates hold:
\begin{enumerate}[label=(D\arabic*),leftmargin=*]
    \item\label{D1}
    \[
        \omega_{\psi\cdot\phi}(r)
        \leqslant
        \omega_\psi\bigl(\omega_f(r)\bigr)\,\|\phi\|
        +\|\psi\|\,\omega_\phi(r).
    \]

    \item\label{D2} 
    \[
        \omega_{\phi^{-1}}(r)
        \leqslant
        \frac{1}{m(\phi)^2}\,
        \omega_\phi\bigl(\omega_{f^{-1}}(r)\bigr).\qquad\quad
    \]
    when $\phi$ is invertible and \(
        m(\phi)\coloneqq\|\phi^{-1}\|^{-1}.
    \)
\end{enumerate}
\end{lemma}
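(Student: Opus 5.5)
Both estimates reduce to pointwise identities for the local matrix representations, followed by taking suprema. Fix the finite trivializing covers $\mathscr D,\mathscr D',\mathscr D''$ with isometric fiber coordinates. Choose $r$ so small that $d(x,y)\leqslant r$ puts $x,y$ in a common $D\in\mathscr D$, $fx,fy$ in a common $D'\in\mathscr D'$, and $gfx,gfy$ in a common $D''\in\mathscr D''$. For $z\in D$ write
\[
\Phi(z)=\theta_{D',fz}\circ\phi_z\circ\theta_{D,z}^{-1},
\qquad
\Psi(fz)=\theta_{D'',gfz}\circ\psi_{fz}\circ\theta_{D',fz}^{-1}.
\]
These are Euclidean matrices. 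Because the $\theta$'s are fiberwise isometries, $\|\Phi(z)\|=\|\phi_z\|\leqslant\|\phi\|$, and similarly for $\Psi$. Since $\theta_{D',fz}^{-1}\theta_{D',fz}=\mathrm{id}$, the representation of $\psi\cdot\phi$ at $z$ is exactly $\Psi(fz)\Phi(z)$.

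For \ref{D1}, write
\[
\Psi(fx)\Phi(x)-\Psi(fy)\Phi(y)=\bigl(\Psi(fx)-\Psi(fy)\bigr)\Phi(x)+\Psi(fy)\bigl(\Phi(x)-\Phi(y)\bigr).
\]
Since $d(fx,fy)\leqslant\omega_f(r)$ and $fx,fy$ lie in a common chart, the first term has norm at most $\omega_\psi(\omega_f(r))\,\|\phi\|$, using that $\omega_\psi$ is non-decreasing. The second term has norm at most $\|\psi\|\,\omega_\phi(r)$. Taking the supremum over admissible $x,y$ and charts gives \ref{D1}.

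For \ref{D2}, the map $\phi^{-1}$ is a homomorphism $\mathscr E'\to\mathscr E$ covering $f^{-1}$, and its representation at $x'=fx$ is $\Phi(x)^{-1}$, with $\|\Phi(x)^{-1}\|=\|\phi_x^{-1}\|\leqslant 1/m(\phi)$. Take $x',y'$ with $d(x',y')\leqslant r$ and set $x=f^{-1}x'$, $y=f^{-1}y'$, so that $d(x,y)\leqslant\omega_{f^{-1}}(r)$. The resolvent identity
\[
\Phi(x)^{-1}-\Phi(y)^{-1}=\Phi(x)^{-1}\bigl(\Phi(y)-\Phi(x)\bigr)\Phi(y)^{-1}
\]
then gives a bound of $m(\phi)^{-2}\,\omega_\phi\bigl(\omega_{f^{-1}}(r)\bigr)$. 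Taking the supremum gives \ref{D2}.

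The only delicate step is the bookkeeping of charts. The supremum defining $\omega_{\phi^{-1}}$ runs over chart pairs chosen for the covering map $f^{-1}$. So one must check that, for small $r$, the pairs $(D',D)$ used there are admissible in the supremum defining $\omega_\phi$ at scale $\omega_{f^{-1}}(r)$; this holds because $\omega_{f^{-1}}(r)\to0$. The same check for $\omega_\psi$ at scale $\omega_f(r)$ is what requires $r$ sufficiently small. If the chart pairs do not match exactly, one first refines to common covers. This costs the transition-matrix error discussed in the preceding remark, which is $O(r+\omega_f(r))$, so I would state the lemma with a fixed compatible choice of covers to avoid it.
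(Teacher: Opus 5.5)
Your proposal is correct and follows essentially the same route as the paper. For \ref{D1} you use the same splitting $\Psi(fx)\Phi(x)-\Psi(fy)\Phi(y)=\bigl(\Psi(fx)-\Psi(fy)\bigr)\Phi(x)+\Psi(fy)\bigl(\Phi(x)-\Phi(y)\bigr)$, and for \ref{D2} the same resolvent identity; your explicit check that the chart pairs for $\phi^{-1}$ are admissible in the supremum defining $\omega_\phi$ at scale $\omega_{f^{-1}}(r)$ supplies a detail the paper leaves to Anosov.
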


\begin{proof}
Let $\mathscr D$, $\mathscr D'$, and $\mathscr D''$ be finite trivializing covers of $M$, $f(M)$, and $g(f(M))$, respectively. Choose $\varepsilon>0$ so small that, whenever $d(x,y)\leqslant\varepsilon$, the points $x,y$ lie in a common element $D\in\mathscr D$, the points $fx,fy$ lie in a common element $D'\in\mathscr D'$, and the points $g(fx),g(fy)$ lie in a common element $D''\in\mathscr D''$.

Fix $r\in[0,\varepsilon]$ and points $x,y$ with $d(x,y)\leqslant r$. For simplicity, write
\[
    \theta=\theta_D,
    \qquad
    \theta'=\theta_{D'},
    \qquad
    \theta''=\theta_{D''},
\]
and denote the restrictions to the corresponding fibers by
\[
    \theta_x,\ \theta_y,\ \theta'_{fx},\ \theta'_{fy},\
    \theta''_{g(fx)},\ \theta''_{g(fy)}.
\]
In Euclidean coordinates, the difference between
$(\psi\phi)_x=\psi_{fx}\phi_x$ and
$(\psi\phi)_y=\psi_{fy}\phi_y$ can be decomposed as follows:
\begin{align*}
&\theta''_{g(fx)}\circ\psi_{fx}\circ\phi_x\circ\theta_x^{-1}
-
\theta''_{g(fy)}\circ\psi_{fy}\circ\phi_y\circ\theta_y^{-1}
\\
&=\Bigl(
    \theta''_{g(fx)}\circ\psi_{fx}\circ(\theta'_{fx})^{-1}
    -
    \theta''_{g(fy)}\circ\psi_{fy}\circ(\theta'_{fy})^{-1}
  \Bigr)
  \circ\theta'_{fx}\circ\phi_x\circ\theta_x^{-1}
\\
&\quad+
  \theta''_{g(fy)}\circ\psi_{fy}\circ(\theta'_{fy})^{-1}
  \circ\Bigl(
      \theta'_{fx}\circ\phi_x\circ\theta_x^{-1}
      -
      \theta'_{fy}\circ\phi_y\circ\theta_y^{-1}
  \Bigr)
\\
&=:I+II.
\end{align*}
Because the fiber-coordinate maps are isometries,
\begin{align*}
\|I\|
&\leqslant
\left\|
    \theta''_{g(fx)}\circ\psi_{fx}\circ(\theta'_{fx})^{-1}
    -
    \theta''_{g(fy)}\circ\psi_{fy}\circ(\theta'_{fy})^{-1}
\right\|
\left\|\theta'_{fx}\circ\phi_x\circ\theta_x^{-1}\right\|
\\
&\leqslant \omega_\psi\bigl(d(fx,fy)\bigr)\,\|\phi_x\|
\\
&\leqslant \omega_\psi\bigl(\omega_f(r)\bigr)\,\|\phi\|.
\end{align*}
Similarly,
\begin{align*}
\|II\|
&\leqslant
\left\|
    \theta''_{g(fy)}\circ\psi_{fy}\circ(\theta'_{fy})^{-1}
\right\|
\left\|
    \theta'_{fx}\circ\phi_x\circ\theta_x^{-1}
    -
    \theta'_{fy}\circ\phi_y\circ\theta_y^{-1}
\right\|
\\
&\leqslant \|\psi\|\,\omega_\phi(r).
\end{align*}
Combining the estimates for $I$ and $II$ proves \Cref{D1}.

The estimate in \Cref{D2} follows from the identity
\[
    A^{-1}-B^{-1}=B^{-1}(B-A)A^{-1}.
\]
See \cite[p.~821]{Anosov67b} for further details.
\end{proof}

\begin{remark}
The proof of \Cref{D1} gives a more convenient estimate
\[
\begin{aligned}
    \omega_{\psi\cdot\phi}(r)
    &\leqslant
    \omega_{\psi\circ f}(r)\,\|\phi\|
    +\|\psi\|\,\omega_\phi(r)
    \\
    &\leqslant
    \omega_\psi\bigl(\omega_f(r)\bigr)\,\|\phi\|
    +\|\psi\|\,\omega_\phi(r).
\end{aligned}
\]
Here $\psi\circ f$ is regarded as a section of a vector bundle $\widetilde{\mathscr E}$ over $M$, whose fiber at $x\in M$ is the space of linear maps from $\mathscr E'_{fx}$ to $\mathscr E''_{g(fx)}$. Similarly, \Cref{D2} has a more convenient form
\[
    \omega_{\phi^{-1}}(r)
    \leqslant
    \frac{1}{m(\phi)^2}\,\omega_{\phi\circ f^{-1}}(r)
    \leqslant
    \frac{1}{m(\phi)^2}\,
    \omega_\phi\bigl(\omega_{f^{-1}}(r)\bigr).
\]

So, when $f$ and $g$ are diffeomorphisms and $\phi=Df, \psi=Dg$, (D1) and (D2) read as follows.

$$\omega_{D(g\circ f)}(r)\leqslant\omega_{Dg\circ f}(r)\|Df\|+\|Dg\|\omega_{Df}(r).\leqno (D1)$$

$$\omega_{D(f^{-1})}(r)\leqslant\frac{1}{m(Df)^2}\omega_{Df\circ f^{-1}}(r).\leqno (D2)$$

\end{remark}

\begin{corollary}\label{cor:iteration}
Let $f\in\mathrm{Diff}^{1+\omega}(M)$. Then, for every $n\in\mathbb Z$,
\(
    f^n\in\mathrm{Diff}^{1+\omega}(M).
\)
\end{corollary}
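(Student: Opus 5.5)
We argue by induction on $n\geqslant 0$ for positive iterates, and treat negative iterates with \Cref{D2}; the case $n=0$ is trivial because $D(\mathrm{id})$ is the identity section, which is constant in the chosen fiber coordinates (after possibly enlarging the constant to absorb the smooth transition matrices, as in the remark on independence of trivializations). Throughout we use that $f$ and $f^{-1}$ are $C^1$ on the compact manifold $M$, hence Lipschitz: there is $L\geqslant1$ with $\omega_{f^{\pm1}}(r)\leqslant Lr$. Combined with \Cref{lem:basic-modulus}, this gives
\[
    \omega(\omega_{f^{\pm1}}(r))\leqslant\omega(Lr)\leqslant L\,\omega(r)
\]
for all $r\geqslant0$. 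This is the only place where concavity of the modulus enters.

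For the inductive step, suppose $\omega_{Df^n}\leqslant C_n\omega$. Write $Df^{n+1}=Df\circ f^n\cdot Df^n$, that is, take $\psi=Df$ covering $g=f$ and $\phi=Df^n$ covering $f^n$ in \Cref{D1}. Then for small $r$,
\[
    \omega_{Df^{n+1}}(r)\leqslant \omega_{Df}\bigl(\omega_{f^n}(r)\bigr)\|Df^n\|+\|Df\|\,\omega_{Df^n}(r).
\]
Since $f^n$ is Lipschitz with constant $L^n$, the first term is at most $C_1\omega(L^nr)\|Df\|^n\leqslant C_1L^n\|Df\|^n\omega(r)$. The second term is at most $\|Df\|C_n\omega(r)$. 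Hence
\[
    C_{n+1}=C_1L^n\|Df\|^n+\|Df\|C_n
\]
works for $r\leqslant\varepsilon$. For $r\geqslant\varepsilon$, the modulus is constant by definition and $\omega(r)\geqslant\omega(\varepsilon)>0$, so the bound extends after enlarging the constant.

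For negative iterates, it suffices to show $f^{-1}\in\mathrm{Diff}^{1+\omega}(M)$ and then apply the positive case to $f^{-1}$. We use $D(f^{-1})=(Df\circ f^{-1})^{-1}$ and the convenient form of \Cref{D2}:
\[
    \omega_{D(f^{-1})}(r)\leqslant m(Df)^{-2}\,\omega_{Df}\bigl(\omega_{f^{-1}}(r)\bigr)\leqslant m(Df)^{-2}C_1L\,\omega(r).
\]
Here $m(Df)>0$ by compactness and invertibility.

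The only delicate point is bookkeeping of domains. The estimates of \Cref{lem:bundle-Dini-ineq} hold only for $r$ below some $\varepsilon$ depending on the covers for $f$, $f^n$ and $f^{n+1}$, and the coordinate moduli depend on the trivializations. We handle this by fixing one finite trivializing cover of $TM$ for all iterates, which the independence remark permits up to constants. For each $n$, we choose $\varepsilon_n$ small enough that $f^n$ and $f$ map $\varepsilon_n$-close points into common chart elements, and we extend to $r\geqslant\varepsilon_n$ using boundedness of $Df^n$ and $\omega(\varepsilon_n)>0$. The constants grow with $n$, but the statement only asks for a constant for each fixed $n$, so this growth is harmless.
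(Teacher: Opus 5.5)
Your proposal is correct and follows essentially the paper's route. The paper states this corollary without further argument, as an immediate consequence of (D1) and (D2) specialized to $\phi=Df$, $\psi=Dg$, combined with the Lipschitz bound $\omega_{f^{\pm1}}(r)\leqslant Lr$ and \Cref{lem:basic-modulus} to absorb the factor $L$ (resp.\ $L^n$); your induction via (D1), your inverse step via (D2), and your explicit treatment of the $n$-dependent scales $\varepsilon_n$ are exactly that argument with the bookkeeping written out.
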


\begin{remark}
In the remainder of the paper, we will mainly estimate moduli of continuity in local coordinates.
\end{remark}

\section{Geometric structures}\label{sec:geometric}

This section is the technical core of the paper. We construct local plaques tangent to the dominated bundle $F$, control their Dini regularity along typical backward orbits by graph transformation methods, and derive the bounded distortion estimate needed in the proof of the main theorem.

Throughout this section, we assume the hypotheses of \Cref{main-thm-intro}. Thus, $\mu$ is an ergodic invariant probability measure of $f\in\mathrm{Diff}^{1+\omega}(M)$ for some Dini modulus $\omega$, and
\[
    T_{\operatorname{supp}(\mu)}M=E\oplus F,
    \qquad \dim F=k,
\]
is a dominated splitting, with $F$ dominating $E$ and $\lambda_k(\mu,f)>0$. We write $\lambda_k=\lambda_k(\mu,f)$ and fix a number $\theta$ with
\[
    \max\{\lambda_{k+1}(\mu,f),0\}<\theta<\lambda_k(\mu,f).
\]

\subsection{Dominated splitting and a convenient iterate}

We use an adapted Riemannian metric. According to Gourmelon \cite{Gourmelon2007}, there exists $0<\lambda<1$ such that for every $x\in\operatorname{supp}(\mu)$ and every pair of unit vectors $v\in E(x)$ and $w\in F(x)$,
\[
    \frac{\|Df_xv\|}{\|Df_xw\|}\leqslant \lambda.
\]

The following consequence of the Oseledets' theorem is taken from \cite[Lemma 8.4]{ABC2011}.

\begin{lemma}\label{lem:ABC-average}
For every $\eta>0$, there is an integer $N=N(\eta)$ such that, for $\mu$-almost every $x$ and every $n\geqslant N$, the averages
\[
    \frac{1}{\ell n}\sum_{i=1}^{\ell}\log m(Df^n|_{F(f^{-in}x)})
\]
converge, as $\ell\to\infty$, to a number in $(\lambda_k-\eta,\lambda_k]$.
\end{lemma}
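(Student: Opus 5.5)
The plan is to apply Birkhoff's theorem to $f^{-n}$ and the observable $\varphi_n(y)=\log m(Df^n|_{F(y)})$, and then to bound the resulting limit from above and below. Two facts drive everything. First, the conorm is supermultiplicative, $m(AB)\geqslant m(A)m(B)$, so $\varphi_{a+b}\geqslant\varphi_a+\varphi_b\circ f^a$. Second, $|\varphi_1|\leqslant C_0\coloneqq\max\{\log\|Df\|,\log\|Df^{-1}\|\}$, so every $\varphi_n$ is bounded and hence integrable.

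\emph{Existence of the limit.} Since $\varphi_n$ is bounded, Birkhoff's theorem for $f^{-n}$ applies. Therefore, for $\mu$-a.e.\ $x$, the average $\frac1{\ell n}\sum_{i=1}^{\ell}\varphi_n(f^{-in}x)$ converges as $\ell\to\infty$. Now $\mu$ need not be $f^n$-ergodic. However, its $f^n$-ergodic decomposition is finite: $\mu=\frac1p\sum_{j=0}^{p-1}f^j_*\mu_0$ with $p\mid n$, and each $f^j_*\mu_0$ is $f^n$-ergodic. Hence for a.e.\ $x$ in the support of $f^j_*\mu_0$, the limit equals $\frac1n\int\varphi_n\,d(f^j_*\mu_0)$. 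It therefore suffices to show that each of these finitely many numbers lies in $(\lambda_k-\eta,\lambda_k]$.

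\emph{Upper bound.} The composition $Df^n_{f^{-\ell n}x}\cdots$ along the blocks $f^{-in}x\mapsto f^{-(i-1)n}x$, $i=\ell,\dots,1$, equals $Df^{\ell n}_{f^{-\ell n}x}$ restricted to $F$. Supermultiplicativity then gives
\[
\sum_{i=1}^{\ell}\varphi_n(f^{-in}x)\leqslant\log m\bigl(Df^{\ell n}|_{F(f^{-\ell n}x)}\bigr)=-\log\bigl\|Df^{-\ell n}|_{F(x)}\bigr\|.
\]
The bundle $F$ is the sum of the Oseledets spaces for $\lambda_1,\dots,\lambda_k$. By Oseledets' theorem in backward time, $\frac1{\ell n}\log\|Df^{-\ell n}|_{F(x)}\|\to-\lambda_k$ almost everywhere. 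Hence the limit is at most $\lambda_k$ almost everywhere, and consequently every component integral satisfies $\frac1n\int\varphi_n\,d(f^j_*\mu_0)\leqslant\lambda_k$.

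\emph{Lower bound, which is the main obstacle.} Since $\mu$ is $f$-invariant and $(\varphi_n)$ is superadditive, Kingman's theorem gives $\frac1n\int\varphi_n\,d\mu\to\lambda_k$. The limit is identified with $\lambda_k$ by the Oseledets theorem, using the same bounded convergence as above. This, however, only controls the average over the $p$ components, and shifting a single component by $f^j$ could cost $jC_0$, which is too much. To get around this, I would fix $m$ with $\frac1m\int\varphi_m\,d\mu>\lambda_k-\eta/4$. For each offset $0\leqslant s<m$, cutting $[0,n)$ into $[0,s)$, consecutive blocks of length $m$, and a remainder, supermultiplicativity gives
\[
\varphi_n(y)\geqslant\sum_t\varphi_m(f^{s+tm}y)-2mC_0 .
\]
Averaging over $s$ then yields
\[
\varphi_n(y)\geqslant\frac1m\sum_{i=0}^{n-1}\varphi_m(f^iy)-C_1m,
\]
where $C_1$ accounts for the at most $m$ boundary indices, each contributing at most $C_0$. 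Now integrate against $f^j_*\mu_0$. Because $p\mid n$, we have $\frac1n\sum_{i=0}^{n-1}f^i_*(f^j_*\mu_0)=\mu$. This gives
\[
\frac1n\int\varphi_n\,d(f^j_*\mu_0)\geqslant\frac1m\int\varphi_m\,d\mu-\frac{C_1m}{n}>\lambda_k-\frac\eta4-\frac{C_1m}{n}.
\]

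\emph{Conclusion.} Choose $N$ with $C_1m/N<\eta/2$. Then for every $n\geqslant N$ the lower bound exceeds $\lambda_k-\eta$, uniformly in the component index $j$. Combined with the upper bound, the limit lies in $(\lambda_k-\eta,\lambda_k]$ for $\mu$-a.e.\ $x$, which proves the lemma.
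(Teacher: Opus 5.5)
Your proposal is correct. The paper does not prove this lemma; it imports it from \cite[Lemma 8.4]{ABC2011}. So what you have is a self-contained proof standing in for a citation, not a variant of an argument in the text.

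The two ingredients that carry the weight are the ones you isolate.
\begin{itemize}
\item The finite $f^n$-ergodic decomposition $\mu=\frac1p\sum_{j<p}f^j_*\mu_0$ with $p\mid n$. This identifies the $\ell$-limit with $\frac1n\int\varphi_n\,d(f^j_*\mu_0)$ on the $j$-th piece.
\item The shift-averaged superadditivity bound $\varphi_n\geqslant\frac1m\sum_{i=0}^{n-1}\varphi_m\circ f^i-O(m)$. Integrating it against $f^j_*\mu_0$ and using $\frac1n\sum_{i<n}f^i_*(f^j_*\mu_0)=\mu$ turns information about $\mu$ alone into a lower bound that is uniform over all components, of which there may be as many as $n$.
\end{itemize}

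This is precisely the point a naive $L^1$-convergence argument misses. Such an argument only controls the average over the components, and you correctly note that shifting between components costs too much to recover the per-component bound. The upper bound, via $\sum_{i\leqslant\ell}\varphi_n(f^{-in}x)\leqslant-\log\|Df^{-\ell n}|_{F(x)}\|$, needs only the easy half of backward Oseledets: test against one vector in the $\lambda_k$-Oseledets space.

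What the citation buys is brevity. It also packages the identification of $F$ with the sum of the top $k$ Oseledets spaces, which you use but take from the introduction.

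A few cosmetic adjustments:
\begin{itemize}
\item Replace ``the support of $f^j_*\mu_0$'' by disjoint $f^n$-invariant sets $A_j$ of full $f^j_*\mu_0$-measure, since the topological supports of different components can coincide.
\item Intersect the full-measure sets over all $n\geqslant N$, so that one exceptional null set serves every $n$.
\item Kingman's theorem is not needed. Fekete's lemma for $n\mapsto\int\varphi_n\,d\mu$, together with dominated convergence of $\frac1n\varphi_n\to\lambda_k$ from Oseledets, already gives $\sup_m\frac1m\int\varphi_m\,d\mu=\lambda_k$.
\item Tracking constants, the total error in the shift-averaged bound is at most $3mC_0$, so $C_1=3C_0$ works.
\end{itemize}
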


Fix $\eta>0$ so small that $\theta<\lambda_k-\eta$, and choose $N=N(\eta)$. Set
\[
    g=f^N.
\]
Then, for $\mu$-almost every $x$,
\begin{equation}\label{eq:Birkhoff-positive-F}
    \lim_{\ell\to\infty}\frac1{\ell N}\sum_{i=1}^{\ell}
    \log m(Dg|_{F(g^{-i}x)})>\lambda_k-\eta.
\end{equation}
The same splitting $E\oplus F$ is also a dominated splitting for $g$, and $g\in\mathrm{Diff}^{1+\omega}(M)$ by \Cref{cor:iteration}.

We use one fixed Dini modulus throughout the proof.

\begin{lemma}\label{lem:fixed-modulus}
There exists $C_0\geqslant1$ such that, for all sufficiently small $r$,
\[
    \omega_{Dg}(r)\leqslant C_0\omega(r),
    \qquad
    \omega(r)\geqslant r/C_0.
\]
\end{lemma}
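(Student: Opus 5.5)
The statement has two parts: the bound on $\omega_{Dg}$ and the lower bound on $\omega$. Both reduce to facts already established. We also note that we may replace $\omega$ by a fixed Dini modulus without losing anything.

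\emph{First inequality.} By \Cref{cor:iteration}, $g=f^N\in\mathrm{Diff}^{1+\omega}(M)$. Concretely, iterate \Cref{D1} $N-1$ times with $\phi=Df^{j}$ and $\psi=Df$. Each step produces terms $\omega_{Df}(\omega_{f^j}(r))\|Df^j\|+\|Df\|\,\omega_{Df^j}(r)$. Since $f^j$ is Lipschitz with constant $L^j$, where $L=\max\|Df\|$, \Cref{lem:basic-modulus} gives
\[
    \omega_{Df}(L^jr)\leqslant \max\{1,L^j\}\,\omega_{Df}(r)\leqslant C L^j\,\omega(r).
\]
Induction then yields $\omega_{Dg}(r)\leqslant C_0'\omega(r)$ for $r$ below the threshold where the fiber-coordinate comparisons are valid. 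The constant $C_0'$ depends only on $N$, $L$, and the $\omega$-constant of $Df$.

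\emph{Second inequality.} This is the observation already recorded in the remark on trivializations. By \Cref{lem:basic-modulus} with $t=r\leqslant 1$, concavity and $\omega(0)=0$ give $\omega(r)\geqslant \omega(1)\,r$ for $0\leqslant r\leqslant1$, and $\omega(1)>0$ by definition of a modulus. So $\omega(r)\geqslant r/C_0''$ with $C_0''=\max\{1,1/\omega(1)\}$.

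Setting $C_0=\max\{1,C_0',C_0''\}$ gives both claims for all sufficiently small $r$.

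The only point needing care is that the constants in \Cref{D1} are uniform. This holds once the trivializing covers are fixed and $r$ is smaller than the common Lebesgue number for $M$ and its images under $f,\dots,f^{N}$. That is exactly the "sufficiently small $r$" clause, so I expect no real obstacle.
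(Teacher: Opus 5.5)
Your proof is correct and follows the paper's argument: the paper also takes the first bound directly from \Cref{cor:iteration}, which you additionally unpack by iterating \Cref{D1}, and the second from \Cref{lem:basic-modulus} in the form $\omega(r)\geqslant\omega(1)r$ for $0\leqslant r\leqslant1$, then sets $C_0$ to be the maximum of the constants. One small fix in your unpacking: \Cref{lem:basic-modulus} applies only to concave moduli and $\omega_{Df}$ need not be concave, so first bound $\omega_{Df}\leqslant C\omega$ and then scale the concave $\omega$, i.e.\ $\omega_{Df}(L^jr)\leqslant C\omega(L^jr)\leqslant C\max\{1,L^j\}\omega(r)$.
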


\begin{proof}
Since $g\in\mathrm{Diff}^{1+\omega}(M)$, there exists $C_g\geqslant0$ such that
\[
    \omega_{Dg}(r)\leqslant C_g\omega(r)
\]
for all sufficiently small $r$. On the other hand, \Cref{lem:basic-modulus}, applied with base radius $1$ and factor $r$, gives
\[
    \omega(r)\geqslant r\omega(1),
    \qquad 0\leqslant r\leqslant1.
\]
Thus, the conclusion follows after taking
\[
    C_0=\max\left\{1,C_g,\frac{1}{\omega(1)}\right\}.\qedhere
\]
\end{proof}

\subsection{Modulus of continuity in local charts}
In this section, we construct local charts and bumped derivative sequences. For similar discussions, see \cite{TY} for instance.
Choose $r_0\in(0,1)$ so small that, for every $x\in M$,
\[
    g_x=\exp_{gx}^{-1}\circ g\circ\exp_x
\]
is well-defined on $T_xM(2r_0)$. Since the exponential maps are smooth, \Cref{lem:bundle-Dini-ineq} and \Cref{lem:basic-modulus} give the following uniform estimate.

\begin{lemma}\label{lem:local-Dini-Dg}
After decreasing $r_0$ if necessary, there is a constant $C_1\geqslant C_0$ such that
\[
    \omega_{Dg_x}(r)\leqslant C_1\omega(r),
    \qquad 0\leqslant r\leqslant 2r_0,
\]
for every $x\in M$.
\end{lemma}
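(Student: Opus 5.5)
We read the statement as saying that $Dg_x$, viewed as a map $T_xM(2r_0)\to L(T_xM,T_{gx}M)$, has modulus at most $C_1\omega(r)$, uniformly in $x$. The plan is to write $g_x$ as the composition
\[
    g_x=\exp_{gx}^{-1}\circ g\circ\exp_x
\]
and differentiate with the chain rule:
\[
    Dg_x(v)=D(\exp_{gx}^{-1})_{g(\exp_x v)}\circ Dg_{\exp_x v}\circ D(\exp_x)_v .
\]
We then estimate the difference $Dg_x(v)-Dg_x(w)$ for $|v-w|\leqslant r$ by applying \Cref{D1} twice, in the same telescoping form as in the proof of \Cref{lem:bundle-Dini-ineq}. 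To compare $Dg_{\exp_x v}$ with $Dg_{\exp_x w}$, we pass through the fixed fiber coordinates $\theta_D$, $\theta_{D'}$. These coordinates are smooth isometries, so changing coordinates costs only smooth factors. We also shrink $r_0$ below the Lebesgue-number scale $\varepsilon$, so that the relevant points lie in common trivializing sets.

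The first step uses compactness. There is $L\geqslant1$ such that, for all $x\in M$ and all $v,w\in T_xM(2r_0)$:
\begin{itemize}
    \item $d(\exp_xv,\exp_xw)\leqslant L|v-w|$;
    \item the factors $D(\exp_x)_v$ and $D(\exp_{gx}^{-1})_z$, written in the coordinates $\theta$, are bounded in norm by $L$ and are $L$-Lipschitz in $v$ and $z$, uniformly in $x$.
\end{itemize}
This uniformity holds because $(x,v)\mapsto \exp_xv$ is smooth on a compact neighborhood of the zero section. In addition, $\|Dg\|\leqslant L$, and $g$ is $L$-Lipschitz.

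The second step collects the three terms of the telescoped difference. The first exponential factor gives a term of size at most $L\cdot\|Dg\|\cdot L|v-w|$. The middle factor gives a term of size at most
\[
    L^2\,\omega_{Dg}\bigl(L|v-w|\bigr)+(\text{coordinate error}).
\]
Here the coordinate error comes from comparing the chart identification with $\theta$, and it is $O(|v-w|)$ by smoothness. The last exponential factor gives a term of size at most $L\cdot\|Dg\|\cdot L\cdot L|v-w|$.

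The last step converts these bounds into the stated form. By \Cref{lem:fixed-modulus}, $\omega_{Dg}(Lr)\leqslant C_0\,\omega(Lr)$, and by \Cref{lem:basic-modulus}, $\omega(Lr)\leqslant L\,\omega(r)$. Every linear term satisfies $r\leqslant C_0\,\omega(r)$, again by \Cref{lem:fixed-modulus}, provided $2r_0$ lies in the range where that lemma applies. Summing the three contributions gives $\omega_{Dg_x}(r)\leqslant C_1\omega(r)$ for some $C_1$ that depends only on $L$ and $C_0$; we enlarge it if necessary so that $C_1\geqslant C_0$.

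The only delicate point is making the comparison of $Dg$ at two base points uniform in $x$. We handle it by working with finitely many trivializations and shrinking $r_0$, and the rest of the argument is routine.
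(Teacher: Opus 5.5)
Your proposal is correct and follows essentially the same route as the paper. Both arguments apply the chain rule to $g_x=\exp_{gx}^{-1}\circ g\circ\exp_x$, use uniform Lipschitz bounds on the exponential factors (read in the fixed fiber coordinates), bound the middle factor by $C_0\,\omega(Lr)$, and then absorb everything into $C_1\,\omega(r)$ via \Cref{lem:basic-modulus,lem:fixed-modulus} after applying \Cref{lem:bundle-Dini-ineq} twice. The only blemish is that the constants you attach to the first and last exponential terms appear to be interchanged, which has no effect on the argument.
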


\begin{proof}
For $v\in T_xM(2r_0)$,
\[
    Dg_x(v)=D(\exp_{gx}^{-1})_{g(\exp_x v)}\circ Dg_{\exp_x v}\circ D(\exp_x)_v.
\]
After decreasing $r_0$, the norms and Lipschitz constants of the exponential factors are uniformly bounded. Moreover, the maps $\exp_x$ and $g\circ\exp_x$ are uniformly Lipschitz. Hence, the moduli of the two exponential factors are bounded by $Cr$, while the middle factor has a modulus bounded by $C_0\omega(Cr)$. By \Cref{lem:fixed-modulus,lem:basic-modulus}, all three moduli are bounded by a uniform constant times $\omega(r)$. Applying \Cref{lem:bundle-Dini-ineq} twice and absorbing the uniform factors into the constant gives the estimate.
\end{proof}

\begin{lemma}\label{lem:local-derivative-small}
For every $\varepsilon>0$, there exists $\rho\in(0,r_0]$ such that, for every $x\in M$ and every $v\in T_xM(2\rho)$,
\[
    \|Dg_x(v)-Dg_x(0)\|\leqslant \varepsilon.
\]
\end{lemma}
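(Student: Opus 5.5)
The estimate follows directly from \Cref{lem:local-Dini-Dg}, since the modulus $\omega$ tends to $0$ at the origin. Fix $\varepsilon>0$ and $x\in M$, and let $v\in T_xM(2\rho)$ with $\rho\leqslant r_0$ to be chosen. By the definition of the modulus of continuity of the section $Dg_x$ (computed in the trivial fiber coordinates of the vector space $T_xM$, where the comparison of $Dg_x(v)$ and $Dg_x(0)$ makes sense directly), we have
\[
    \|Dg_x(v)-Dg_x(0)\|\leqslant \omega_{Dg_x}(\|v\|)\leqslant \omega_{Dg_x}(2\rho).
\]
The second inequality uses that $\omega_{Dg_x}$ is non-decreasing in $r$, being a supremum over a growing set.

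Next, \Cref{lem:local-Dini-Dg} gives
\[
    \omega_{Dg_x}(2\rho)\leqslant C_1\,\omega(2\rho),
\]
with $C_1$ independent of $x$. Since $\omega$ is a modulus of continuity, it is continuous with $\omega(0)=0$. We can therefore choose $\rho\in(0,r_0]$ so that $C_1\omega(2\rho)\leqslant\varepsilon$. This choice depends only on $\varepsilon$, $C_1$ and $\omega$, hence is uniform in $x\in M$, and combining the two displays gives the claim.

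The one point needing care is making sure the modulus in \Cref{lem:local-Dini-Dg} is the one computed in the linear structure of $T_xM$, so that comparing the points $v$ and $0$ requires no transition maps between fibers. In the local chart $g_x$ the domain is the single vector space $T_xM$ and the target is $T_{gx}M$, so $Dg_x(v)$ and $Dg_x(0)$ both lie in $L(T_xM,T_{gx}M)$. Their difference is therefore bounded directly by the modulus as stated.

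If one instead only trusts the global bound on $\omega_{Dg}$, there is an alternative route. Expand $Dg_x(v)-Dg_x(0)$ through the product formula for $Dg_x(v)$ from the proof of \Cref{lem:local-Dini-Dg}, and estimate each exponential factor's variation by $Cr$ and the middle factor's variation by $C_0\omega(Cr)$, as in \Cref{D1}. This again yields a uniform bound $C\omega(C\rho)\to0$. As a last resort, uniform continuity of $(x,v)\mapsto Dg_x(v)$ on the compact set $\{(x,v): \|v\|\leqslant 2r_0\}$ would also suffice.
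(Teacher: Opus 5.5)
Your argument is correct, but your main route is different from the paper's.

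\textbf{The paper's route.} The paper proves this lemma qualitatively, which is the ``last resort'' you mention at the end. The function $(x,v)\mapsto\|Dg_x(v)-Dg_x(0)\|$ is continuous on the compact set $\{(x,v)\in TM:\|v\|\leqslant 2r_0\}$ and vanishes on the zero section. Uniform continuity, via compactness of $M$, then gives a single $\rho$ that works for all $x$.

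\textbf{Your route.} You read the estimate off \Cref{lem:local-Dini-Dg}, using only that $\omega_{Dg_x}$ is non-decreasing, that the bound $C_1\omega(r)$ is uniform in $x$, and that $\omega(r)\to0$ as $r\to0$. There is no circularity, since \Cref{lem:local-Dini-Dg} is established beforehand without using this lemma. You were also right to check that the modulus there is taken in the linear structure of $T_xM$, so that comparing $Dg_x(v)$ with $Dg_x(0)$ needs no fiber transition maps.

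\textbf{Comparison.} Your version gives an explicit $\rho$: any $\rho\leqslant r_0$ with $C_1\omega(2\rho)\leqslant\varepsilon$ works, and uniformity in $x$ comes for free. The cost is that it rests on the heavier chart estimate \Cref{lem:local-Dini-Dg}, which needs the fiber-coordinate machinery to compare the three factors of $Dg_x(v)$ at different $v$. The paper's argument uses only that $g$ is $C^1$ and $M$ is compact, so it would hold verbatim with no Dini hypothesis. That fits how the lemma is used: only to make the bump-function perturbation $C^1$-small, not for any distortion estimate.
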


\begin{proof}

This follows from the uniform continuity of $Dg_x$ in exponential charts; the uniformity in $x$ follows from the compactness of $M$. See \cite[Lemma~4.10]{WL}.
\end{proof}

Write $A_x=Dg|_{E(x)}$ and $B_x=Dg|_{F(x)}$. Since the splitting is dominated and the support is compact, we may choose $\varepsilon_0>0$ sufficiently small so that, for all $x\in\operatorname{supp}(\mu)$,
\begin{equation}\label{eq:epsilon0-choice}
    0<\varepsilon_0<\frac13\bigl(m(B_x)-\|A_x\|\bigr).
\end{equation}
Let $\alpha\colon\mathbb R\to[0,1]$ be a $C^\infty$ function with $\alpha(t)=1$ for $|t|\leqslant1$ and $\alpha(t)=0$ for $|t|\geqslant2$. Set $C_\alpha=2\|\alpha'\|+1$. In what follows, $0<\varepsilon\leqslant\varepsilon_0$, and $\rho=\rho(\varepsilon/C_\alpha)$ is chosen from \Cref{lem:local-derivative-small}. With this choice, all perturbation terms  are bounded by $\varepsilon$.

Put
\[
    \beta(v)=\alpha(\|v\|/\rho),\qquad v\in T_xM.
\]
Define a map $\widetilde g_x\colon T_xM\to T_{gx}M$ by
\begin{equation}\label{eq:truncated-map}
    \widetilde g_x(v)=\beta(v)g_x(v)+(1-\beta(v))Dg_x(0)v.
\end{equation}
Then $\widetilde g_x=g_x$ on $T_xM(\rho)$ and $D\widetilde g_x(0)=Dg_x(0)=Dg(x)$. Write
\[
    \widetilde g_x(v)=Dg_x(0)v+h_x(v).
\]

\begin{lemma}\label{lem:h-small}
For every $x\in M$,
\(
  \  h_x(0)=0,
    \quad
    \|Dh_x\|\leqslant \varepsilon.
\)
\end{lemma}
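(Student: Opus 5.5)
Write $h_x(v)=\widetilde g_x(v)-Dg_x(0)v$. Then \eqref{eq:truncated-map} becomes
\[
    h_x(v)=\beta(v)\bigl(g_x(v)-Dg_x(0)v\bigr),
\]
and both claims follow from this product form.

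\textbf{Step 1: $h_x(0)=0$.} Since $g_x(0)=\exp_{gx}^{-1}(g(x))=0$, the second factor vanishes at $v=0$.

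\textbf{Step 2: reduce to the region where the cutoff is active.} Put $r_x(v)=g_x(v)-Dg_x(0)v$, which is defined on $T_xM(2r_0)$. We have $\beta\equiv0$ for $\|v\|\geqslant2\rho$, so $h_x$ vanishes identically on the open set $\{\|v\|>2\rho\}$ and $Dh_x=0$ there. By continuity of $Dh_x$ this also holds on $\|v\|=2\rho$. It therefore suffices to treat $\|v\|<2\rho$, where $2\rho\leqslant2r_0$ and $r_x$ is defined. There the product rule gives
\[
    Dh_x(v)=\beta(v)\,Dr_x(v)+r_x(v)\otimes D\beta(v).
\]
The two terms are bounded separately.

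\textbf{Step 3: the first term.} We have $Dr_x(v)=Dg_x(v)-Dg_x(0)$. Since $\rho$ was chosen as $\rho(\varepsilon/C_\alpha)$ in \Cref{lem:local-derivative-small}, this gives $\|Dr_x(v)\|\leqslant\varepsilon/C_\alpha$ for $\|v\|<2\rho$. As $0\leqslant\beta\leqslant1$, the first term has norm at most $\varepsilon/C_\alpha$.

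\textbf{Step 4: the second term.} The mean value inequality along the segment $[0,v]$, which lies in the ball of radius $2\rho$, together with $r_x(0)=0$, gives
\[
    \|r_x(v)\|\leqslant\frac{\varepsilon}{C_\alpha}\|v\|<\frac{2\rho\,\varepsilon}{C_\alpha}.
\]
The norm of $v$ is smooth away from $0$ with gradient of norm $1$, and $\beta$ is constant near $0$. Hence $\|D\beta(v)\|\leqslant\|\alpha'\|/\rho$, and the second term is at most $2\|\alpha'\|\varepsilon/C_\alpha$.

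\textbf{Conclusion.} Adding the two bounds,
\[
    \|Dh_x(v)\|\leqslant\frac{(1+2\|\alpha'\|)\,\varepsilon}{C_\alpha}=\varepsilon,
\]
since $C_\alpha=2\|\alpha'\|+1$. This holds for every $x$, because the constant in \Cref{lem:local-derivative-small} is uniform in $x$.

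The only delicate point is the second term. The factor $1/\rho$ coming from differentiating the cutoff must be cancelled by the linear smallness of $r_x(v)$ on the ball of radius $2\rho$. This is exactly why $\rho$ is tied to $\varepsilon/C_\alpha$, and why the normalization $C_\alpha=2\|\alpha'\|+1$ was chosen.
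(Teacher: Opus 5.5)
Your proof is correct and is essentially the paper's argument: you use the same product-rule decomposition $Dh_x(v)=\beta(v)\bigl(Dg_x(v)-Dg_x(0)\bigr)+\bigl(g_x(v)-Dg_x(0)v\bigr)D\beta(v)$, the bound $\|D\beta\|\leqslant\|\alpha'\|/\rho$, the estimate $\|g_x(v)-Dg_x(0)v\|\leqslant 2\rho\,\varepsilon/C_\alpha$ on the ball of radius $2\rho$, and the normalization $C_\alpha=2\|\alpha'\|+1$. The only cosmetic difference is at the sphere $\|v\|=2\rho$: you handle it by continuity, while the paper simply runs the same estimate on the closed ball $\|v\|\leqslant 2\rho$, which is slightly cleaner since $D\beta$ vanishes there anyway.
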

\begin{proof}
It is direct that
\(
    h_x(0)=0.
\)

Fix $x\in M$ and put
\(
    \eta=\frac{\varepsilon}{C_\alpha}.
\)
We first estimate the derivative of the bump function. Choose an orthonormal
basis of $T_xM$ and write $v=(v_1,\ldots,v_{\dim M})$. If $v\neq0$, then
\[
    \frac{\partial\beta}{\partial v_i}(v)
    =\alpha'\!\left(\frac{\|v\|}{\rho}\right)
      \frac{v_i}{\rho\|v\|},
    \qquad 1\leqslant i\leqslant\dim M.
\]
Consequently, it follows that
\[
    \|D\beta\|\leqslant\frac{\|\alpha'\|}{\rho}.
\]

By the chain rule, we have
$$Dh_x(v)=(g_x(v)-Dg_x(0)v)D\beta(v)+\beta(v)(Dg_x(v)-Dg_x(0)).$$
Therefore, for $\|v\|\leqslant 2\rho,$
\begin{equation*}
    \begin{aligned}
        \|Dh_x(v)\| &\leqslant \|D\beta(v)\|\cdot \|g_x(v)-Dg_x(0)v\| + \|\beta(v)\|\cdot \|Dg_x(v)-Dg_x(0)\|\\
        & \leqslant \|D\beta\|\cdot 2\rho \cdot \max_{\|v\|\leqslant 2\rho}\|Dg_x(v)-Dg_x(0)\| + 1 \cdot \max_{\|v\|\leqslant 2\rho}\|Dg_x(v)-Dg_x(0)\| \\
        &\leqslant (2\rho\|D\beta\|+1) \cdot\max_{\|v\|\leqslant 2\rho}\|Dg_x(v)-Dg_x(0)\|\\
        &\leqslant (2\rho \|D\beta \|+1) \eta\\
        &\leqslant (2 \|\alpha'\| +1) \eta=\varepsilon.
    \end{aligned}
\end{equation*}    
Since $Dh_x(v)$ is non-vanishing only inside $\{\|v\|\leqslant 2\rho\}$, we have that for any $v$, $\|Dh_x(v)\|\leqslant\varepsilon.$ 
Then the proof is complete.
\end{proof}

\begin{lemma}\label{lem:Dh-Dini}
For every $0<\varepsilon\leqslant\varepsilon_0$, there exists $C_2\geqslant C_1$ such that, for every $x\in M$,
\[
    \omega_{Dh_x}(r)\leqslant C_2\omega(r),
    \qquad r\geqslant0.
\]
\end{lemma}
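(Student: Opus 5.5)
We work with the explicit formula for $Dh_x$ from the proof of \Cref{lem:h-small}:
\[
    Dh_x(v)=\bigl(g_x(v)-Dg_x(0)v\bigr)\,D\beta(v)+\beta(v)\bigl(Dg_x(v)-Dg_x(0)\bigr).
\]
We estimate the modulus of continuity of each of the two terms separately. Since $Dh_x$ vanishes outside $T_xM(2\rho)$ and is bounded by $\varepsilon$ everywhere, it suffices to treat pairs $v,w$ with $\|v-w\|\leqslant r$ and $r\leqslant\rho$. For $r\geqslant\rho$ we have the trivial bound $\omega_{Dh_x}(r)\leqslant2\varepsilon$. This is at most $\bigl(2\varepsilon/\omega(\rho)\bigr)\omega(r)$ because $\omega$ is non-decreasing, and $\rho$ depends only on $\varepsilon$. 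If one of $v,w$ lies outside the ball of radius $2\rho$, we replace it by the point where the segment $[v,w]$ exits the sphere of radius $2\rho$; there $Dh_x=0$ by continuity. So we may assume both points lie in $T_xM(2\rho)$, where the local chart estimates apply.

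\textbf{Second term.} Write $\beta(v)\Delta(v)-\beta(w)\Delta(w)=\beta(v)(\Delta(v)-\Delta(w))+(\beta(v)-\beta(w))\Delta(w)$, where $\Delta(v)=Dg_x(v)-Dg_x(0)$.
\begin{itemize}
    \item By \Cref{lem:local-Dini-Dg}, the first piece is bounded by $C_1\omega(r)$.
    \item The second piece is bounded by $\|D\beta\|\,r\,\|\Delta(w)\|\leqslant(\|\alpha'\|/\rho)\,\eta\, r$.
\end{itemize}

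\textbf{First term.} Set $R(v)=g_x(v)-Dg_x(0)v$. By the mean value theorem, $\|R(v)\|\leqslant 2\rho\eta$ and $\|R(v)-R(w)\|\leqslant\eta r$. We split the difference as $(R(v)-R(w))D\beta(v)+R(w)(D\beta(v)-D\beta(w))$.
\begin{itemize}
    \item The first piece is bounded by $\eta r\|\alpha'\|/\rho$.
    \item For the second piece, $\beta$ is smooth, including near $0$, since $\alpha$ is constant on $[-1,1]$. Its second derivative satisfies $\|D^2\beta\|\leqslant C'/\rho^2$ with $C'$ depending only on $\alpha$. This gives a bound $2\rho\eta\cdot C' r/\rho^2$.
\end{itemize}

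Combining the pieces, $\omega_{Dh_x}(r)\leqslant C_1\omega(r)+C(\alpha,\rho,\varepsilon)\,r$ for $r\leqslant\rho$. By \Cref{lem:fixed-modulus} (equivalently \Cref{lem:basic-modulus}), $r\leqslant C_0\omega(r)$ for small $r$, and the bound $\omega(r)\geqslant r\,\omega(1)$ covers $r\leqslant 1$. Taking $C_2$ as the maximum of $C_1+C C_0$ and the constant from the large-$r$ regime gives the claim. Every constant is independent of $x$, since $\rho$, $\eta$ and $\alpha$ are uniform and \Cref{lem:local-Dini-Dg} is uniform in $x$.

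The main obstacle is bookkeeping rather than a genuine difficulty. The key point is that the bump-function contributions are Lipschitz in $v$, and we must check that Lipschitz terms are dominated by $\omega$. Here concavity is essential: it gives $\omega(r)\gtrsim r$. We must also ensure that the constants, which blow up like $1/\rho$, depend only on $\varepsilon$ and not on $x$. This is acceptable because the lemma allows $C_2$ to depend on $\varepsilon$.
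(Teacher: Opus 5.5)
Your proposal is correct and follows essentially the same route as the paper. Both arguments use the trivial bound $2\varepsilon/\omega(\rho)$ when $\|v-w\|\geqslant\rho$, then split $Dh_x$ by the product rule into Lipschitz bump-function contributions plus the term $\omega_{Dg_x}(\|v-w\|)\leqslant C_1\omega(\|v-w\|)$, and absorb the linear terms via $r\leqslant C_0\omega(r)$. The only cosmetic difference is the mixed case: the paper handles $\|v\|\leqslant2\rho\leqslant\|w\|$ directly using $\beta(w)=0$ and $D\beta(w)=0$, whereas you move $w$ to the exit point on the sphere of radius $2\rho$, which is equally valid.
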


\begin{proof}
If $\|v-w\|\geqslant \rho$, then \Cref{lem:h-small} and the monotonicity of $\omega$ give
\[
    \|Dh_x(v)-Dh_x(w)\|
    \leqslant 2\varepsilon
    \leqslant \frac{2\varepsilon}{\omega(\rho)}\,
    \omega(\|v-w\|).
\]
Now assume that $\|v-w\|\leqslant \rho$, and $\|v\|\leqslant\|w\|$ without loss of generality. We have the following three cases.

\noindent {\bf Case 1. } $\|v\|\geqslant 2\rho, \|w\|\geqslant 2\rho$. It follows that $Dh_x(v)-Dh_x(w)=0$. 

\noindent {\bf Case 2. }
$\|v\|\leqslant 2\rho, \|w\|\geqslant 2\rho$. Then $Dh_x(w)=0$.  And we have
\begin{eqnarray*}
\|Dh_x(v)-Dh_x(w)\|&=&\|Dh_x(v)\|\\
&=& \|(g_x(v)-Dg_x(0)v)D\beta(v)+\beta(v)(Dg_x(v)-Dg_x(0))\|\\
&\leqslant& \|(g_x(v)-Dg_x(0)v)(D\beta(v)-D\beta(w))\|+\\
& & \|(\beta(v)-\beta(w))(Dg_x(v)-Dg_x(0))\|\\
&\leqslant& (2\rho\frac{\varepsilon}{C_\alpha}\|D^2\beta\|+\frac{\varepsilon}{C_\alpha}\|D\beta\|)\|v-w\|.
\end{eqnarray*}

\noindent {\bf Case 3. } $\|v\|\leqslant 2\rho, \|w\|\leqslant 2\rho$.
\begin{eqnarray*}
\|Dh_x(v)-Dh_x(w)\|
&\leqslant& \|(g_x(v)-Dg_x(0)v)D\beta(v)-(g_x(w)-Dg_x(0)w)D\beta(w)\| +\\
& & \|\beta(v)(Dg_x(v)-Dg_x(0))-\beta(w)(Dg_x(w)-Dg_x(0))\|\\
&\leqslant& \|(g_x(v)-Dg_x(0)v)(D\beta(v)-D\beta(w))\|+\\
& & \|g_x(v)-g_x(w)-Dg_x(0)(v-w)\|\|D\beta(w)\|+\\
& & \|\beta(v)(Dg_x(v)-Dg_x(w))\|+\\
& & \|(\beta(v)-\beta(w))(Dg_x(w)-Dg_x(0))\|\\
&\leqslant& (2\rho\frac{\varepsilon}{C_\alpha}\|D^2\beta\|+2\frac{\varepsilon}{C_\alpha}\|D\beta\|)\|v-w\|+\omega_{Dg_x}(\|v-w\|).
\end{eqnarray*}
Thus, if $\|v-w\|\leqslant\rho$, then \Cref{lem:fixed-modulus,lem:local-Dini-Dg} gives
\[
\begin{aligned}
    \|Dh_x(v)-Dh_x(w)\|
    &\leqslant
    \left(2\rho\frac{\varepsilon}{C_\alpha}\|D^2\beta\|
    +2\frac{\varepsilon}{C_\alpha}\|D\beta\|\right)\|v-w\|
    +\omega_{Dg_x}(\|v-w\|) \\
    &\leqslant
    \left[
        C_0\left(2\rho\frac{\varepsilon}{C_\alpha}\|D^2\beta\|
        +2\frac{\varepsilon}{C_\alpha}\|D\beta\|\right)
        +C_1
    \right]\omega(\|v-w\|).
\end{aligned}
\]
Combining this estimate with the case $\|v-w\|\geqslant\rho$, we obtain
\[
    \omega_{Dh_x}(r)\leqslant C_2\omega(r),
    \qquad r\geqslant0,
\]
where
\[
    C_2=
    \max\left\{
        C_1+C_0\left(2\rho\frac{\varepsilon}{C_\alpha}\|D^2\beta\|
        +2\frac{\varepsilon}{C_\alpha}\|D\beta\|\right),
        \frac{2\varepsilon}{\omega(\rho)}
    \right\}.\qedhere
\]
\end{proof}

Using the splitting $T_xM=E(x)\oplus F(x)$, write
\[
    \widetilde g_x(u,v)=\bigl(A_xu+h_{x,1}(u,v),\,B_xv+h_{x,2}(u,v)\bigr),
    \qquad u\in E(x),\ v\in F(x).
\]
The angles between $E$ and $F$ are uniformly bounded away from zero. 
\begin{remark}
In the rest of this paper, we replace the Riemannian norm by the uniformly equivalent box norm defined as:
\[
    \|u+v\|=\max\{\|u\|,\|v\|\},
    \qquad u\in E(x),\ v\in F(x).
\]
\end{remark}
After shrinking $\rho$ and enlarging $C_2$ if necessary, the following uniform chart estimates hold. We omit the details for the proof.

\begin{corollary}\label{cor:local-chart}
For every $0<\varepsilon\leqslant\varepsilon_0$, there exist $\rho>0$ and $C_2\geqslant1$ such that, for every $x\in\operatorname{supp}(\mu)$,
\begin{enumerate}[label=(\roman*),leftmargin=*]
    \item $\|A_x\|/m(B_x)\leqslant \lambda^N<1$;
    \item $\widetilde g_x=g_x$ on $T_xM(\rho)$;
    \item $h_x(0)=0$;
    \item $\|Dh_x\|,\|Dh_{x,1}\|,\|Dh_{x,2}\|\leqslant\varepsilon$;
    \item $\omega_{D\widetilde g_x}(r)$, $\omega_{Dh_x}(r)$, $\omega_{Dh_{x,1}}(r)$ and $\omega_{Dh_{x,2}}(r)$ are bounded by $C_2\omega(r)$ for all $r\geqslant0$.
\end{enumerate}
\end{corollary}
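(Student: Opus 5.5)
We take each item in turn. Since every statement is uniform over $x\in\operatorname{supp}(\mu)$, we track constants coming only from compactness and from the uniform angle bound between $E$ and $F$.

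Item (i) uses only the adapted metric of Gourmelon and the choice $g=f^N$. For unit vectors $u\in E(x)$ and $w\in F(x)$ we write $Dg_x u/\|Dg_xw\|$ as the telescoping product of the one-step ratios $\|Df v_j\|/\|Df w_j\|$ along the orbit, where $v_j,w_j$ are the normalized images. Each one-step ratio is at most $\lambda$, so $\|Dg_xu\|\leqslant\lambda^N\|Dg_xw\|$. Taking the supremum over $u$ and the infimum over $w$ gives $\|A_x\|\leqslant\lambda^N m(B_x)$. Passing to the box norm does not change the norms restricted to $E(x)$ and $F(x)$ separately, so the estimate survives.

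Items (ii) and (iii) are immediate from the definition \eqref{eq:truncated-map}: $\beta\equiv1$ on $T_xM(\rho)$, and $h_x(0)=0$ was shown in \Cref{lem:h-small}. Shrinking $\rho$ keeps both.

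Items (iv) and (v) require converting to the splitting coordinates. Let $\pi^E_x,\pi^F_x$ be the projections of $T_xM$ onto $E(x)$ and $F(x)$. The splitting is continuous on the compact set $\operatorname{supp}(\mu)$ with angles bounded away from zero, so these projections have norms bounded by a uniform $K\geqslant1$. Then $h_{x,1}=\pi^E_{gx}\circ h_x$ and $h_{x,2}=\pi^F_{gx}\circ h_x$, where the projections are fixed linear maps on the fiber, so
\[
\|Dh_{x,i}\|\leqslant K\|Dh_x\|,\qquad \omega_{Dh_{x,i}}(r)\leqslant K\,\omega_{Dh_x}(r).
\]
The box norm is $K'$-equivalent to the Riemannian norm, which changes operator norms and moduli only by a uniform factor.

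To get the constant exactly $\varepsilon$ in (iv) rather than $KK'\varepsilon$, we rerun \Cref{lem:local-derivative-small} and \Cref{lem:h-small} with $\varepsilon/(KK')$ in place of $\varepsilon$. This means choosing $\rho=\rho(\varepsilon/(KK'C_\alpha))$, which is the shrinking of $\rho$ the statement allows. For (v), \Cref{lem:Dh-Dini} gives $\omega_{Dh_x}\leqslant C_2\omega$. Since $D\widetilde g_x=Dg_x(0)+Dh_x$ with $Dg_x(0)$ constant, $\omega_{D\widetilde g_x}=\omega_{Dh_x}$. Multiplying by $K$ and by the norm-equivalence factors, and using \Cref{lem:basic-modulus} to absorb the rescaling of the argument $r$, gives the bound after enlarging $C_2$.

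The main subtlety is the interplay of the constants. $C_2$ in \Cref{lem:Dh-Dini} depends on $\rho$ through the term $2\varepsilon/\omega(\rho)$ and through $\|D^2\beta\|\sim\rho^{-2}$. So we fix $\varepsilon$ first, then $\rho$, then $C_2$. $C_2$ may blow up as $\varepsilon\to0$, but this is harmless because the statement only asks for existence of $C_2$ for each $\varepsilon$. We also check that $\varepsilon_0$ from \eqref{eq:epsilon0-choice} is unaffected, since it depends only on $A_x$ and $B_x$.
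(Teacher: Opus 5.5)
Your proposal is correct and supplies the details the paper omits when it says ``after shrinking $\rho$ and enlarging $C_2$'': (i) comes from Gourmelon's adapted metric via the telescoping one-step ratios, and (iv)--(v) come from Lemmas~\ref{lem:h-small} and~\ref{lem:Dh-Dini} together with uniformly bounded projections, box/Riemannian norm equivalence and $\omega_{D\widetilde g_x}=\omega_{Dh_x}$, fixing $\varepsilon$ first, then $\rho$, then $C_2$. One point in (ii) needs tidying: $\beta$ is built from the Riemannian norm, but $T_xM(\rho)$ is now a box-norm ball, which is only guaranteed to lie in the Riemannian ball of radius $2\rho$, so take the bump radius to be twice the $\rho$ in the statement (shrinking both together does not fix this), which is harmless.
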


\subsection{Admissible manifolds and plaque families}

\begin{definition}
Let $x\in\operatorname{supp}(\mu)$ and $q\in(0,\infty]$. A $q$-admissible manifold at $x$ is a set $V\subset T_xM$ of the form
\[
    V=\{(\varphi(t),t):t\in F(x,q)\},
    \qquad F(x,q)=\{t\in F(x):\|t\|\leqslant q\},
\]
where $\varphi\colon F(x,q)\to E(x)$ is a $C^1$ map satisfying $\varphi(0)=0$ and $\|D\varphi\|\leqslant1$. The map $\varphi$ is called the representing function of $V$.
\end{definition}

The following theorem comes from the Plaque Family Theorem of Hirsch--Pugh--Shub \cite[Theorem~5.5]{HPS77}; for further regularity properties of plaque families, see also \cite{PSW97}. Similar discussions are also presented in \cite{LVY13,TY,Part2}.

\begin{theorem}\label{pft}
For every $x\in\operatorname{supp}(\mu)$, there is an $\infty$-admissible manifold $\widetilde W_x\subset T_xM$ such that
\[
    \widetilde g_x(\widetilde W_x)=\widetilde W_{gx}.
\]
If $\widetilde\varphi_x$ is the representing function of $\widetilde W_x$, then $\widetilde\varphi_x$ and $D\widetilde\varphi_x$ depend continuously on $x$ in the following sense: if $v_n=(\widetilde{\varphi}_{x_n}(t_n), t_n)\in\widetilde{W}_{x_n}$ and $x_n\to x, t_n\to t$, then $
\widetilde{\varphi}_{x_n}(t_n)\to\widetilde\varphi_x(t), D\widetilde\varphi_{x_n}(t_n)\to D\widetilde\varphi_x(t).$
\end{theorem}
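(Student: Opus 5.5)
The plan is to build the plaques $\widetilde W_x$ by a graph transform on the space of globally defined Lipschitz graphs over $F$, exploiting that the maps $\widetilde g_x$ are globally defined. By \Cref{cor:local-chart}, each $\widetilde g_x$ is an $\varepsilon$-perturbation of the dominated linear map $A_x\oplus B_x$.

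\textbf{The graph transform.} Consider the sequence space
\[
    \mathcal S=\bigl\{(\varphi_x)_{x\in\operatorname{supp}(\mu)}:\ \varphi_x\colon F(x)\to E(x)\ \text{is } C^1,\ \varphi_x(0)=0,\ \|D\varphi_x\|\leqslant1\bigr\}.
\]
Define $\Gamma$ by letting $(\Gamma\varphi)_{gx}$ be the function whose graph is $\widetilde g_x(\operatorname{graph}\varphi_x)$. This is well defined for two reasons. First, the $F$-component map $t\mapsto B_xt+h_{x,2}(\varphi_x(t),t)$ has derivative at least $m(B_x)-\varepsilon$ in conorm. It is therefore a global diffeomorphism of $F(x)$ onto $F(gx)$, by Hadamard's theorem, since it is a bi-Lipschitz perturbation of an invertible linear map. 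Second, $\widetilde g_x(0)=0$, which gives $(\Gamma\varphi)_{gx}(0)=0$.

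\textbf{Preservation of the cone and contraction.} The cone condition $\|D\varphi\|\leqslant 1$ is preserved, because
\[
    \frac{\|A_x\|+\varepsilon}{m(B_x)-\varepsilon}<1
\]
by \eqref{eq:epsilon0-choice}. This is exactly the reason for the choice of $\varepsilon_0$ relative to the box norm. Next, $\Gamma$ is a contraction in the weighted sup metric
\[
    \sup_x\sup_{t\neq0}\frac{\|\varphi_x(t)-\psi_x(t)\|}{\|t\|},
\]
with rate
\[
    \frac{\|A_x\|+2\varepsilon}{m(B_x)-\varepsilon}<1,
\]
uniformly in $x$. Its unique fixed point $(\widetilde\varphi_x)$ gives Lipschitz graphs satisfying the invariance $\widetilde g_x(\widetilde W_x)=\widetilde W_{gx}$.

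\textbf{The $C^1$ property.} I would obtain $C^1$-ness by the standard fiber-contraction argument. Apply the graph transform simultaneously to the tangent planes, viewed as graphs of linear maps $F\to E$ of norm at most $1$, over the points of the graph. The action on these planes is a fiberwise contraction, again by domination. By the fiber contraction theorem, the invariant section is continuous and coincides with $D\widetilde\varphi_x$.

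\textbf{Continuous dependence on $x$ (main obstacle).} This is the step I expect to be the main difficulty, because the charts $\widetilde g_x$ vary only continuously, and so do the splitting $E\oplus F$ and the identifications $T_xM$ used to compare plaques. I would handle it in three steps:
\begin{enumerate}
    \item Note that the fixed point is the uniform limit of the iterates $\Gamma^n(0)$. Each iterate $(\Gamma^n0)_x$ depends only on the finitely many maps $\widetilde g_{g^{-j}x}$, $j\leqslant n$, which depend continuously on $x$ via $g$, $\exp$, $\beta$, and the continuous splitting.
    \item Combine this with the contraction being uniform in $x$, after localizing to $\|t\|\leqslant R$ where the weighted metric controls the sup norm. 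This yields continuity of $\widetilde\varphi_x(t)$ jointly in $(x,t)$.
    \item Run the same argument for the derivative section. This yields continuity of $D\widetilde\varphi_x(t)$ jointly in $(x,t)$.
\end{enumerate}
Alternatively, one can simply invoke \cite[Theorem~5.5]{HPS77} with the bundle map $\widetilde g$ acting on $T_{\operatorname{supp}(\mu)}M$, which is exactly this setting.
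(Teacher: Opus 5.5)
Your proposal is correct and is essentially the paper's route. The paper gives no argument beyond citing the Plaque Family Theorem \cite[Theorem~5.5]{HPS77}, which is your closing alternative, and your global graph transform with a fiber contraction on tangent planes is the standard proof behind that theorem; the direct computation even gives the rate $\frac{(\|A_x\|+\varepsilon)m(B_x)}{(m(B_x)-\varepsilon)^2}$ defining the paper's $\kappa$, which your bound majorizes.

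Only routine care is needed when writing it out:
\begin{enumerate}
    \item Run the $C^0$ contraction on the complete space of $1$-Lipschitz sections rather than $C^1$ ones, and use the Lipschitz inverse function theorem instead of Hadamard's theorem.
    \item In the fiber-contraction step, take the fiber space to be bounded sections that are jointly continuous in $(x,t)$. This makes the skew product continuous, and the limiting derivative then inherits joint continuity.
\end{enumerate}
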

\begin{remark}
The image under $\exp_x$ of the restriction of $\widetilde W_x$ to $T_xM(\rho):=\{v\in T_xM:\|v\|\leqslant \rho \}$ is usually called the local plaque tangent to $F$ at $x$.
\end{remark}

\subsection{Dini estimates under the graph transform}

The next proposition describes how the modulus of the tangent distribution of the admissible manifold changes under the graph transform.

\begin{proposition}\label{prop:graph-transform}
Let $0<\varepsilon\leqslant\varepsilon_0$, let $x\in\operatorname{supp}(\mu)$, and let $V$ be a $q$-admissible manifold at $x$ with representing function $\varphi$. Then $\widetilde g_x(V)$ contains a $(m(B_x)-\varepsilon)q$-admissible manifold at $gx$. If $\phi$ is its representing function, then
\[
\begin{aligned}
\omega_{D\phi}(r)
&\leqslant
\frac{(\|A_x\|+\varepsilon)m(B_x)}{(m(B_x)-\varepsilon)^2}
\,\omega_{D\varphi}\left(\frac{r}{m(B_x)-\varepsilon}\right) \\
&\quad +
\frac{\|A_x\|+m(B_x)}{(m(B_x)-\varepsilon)^2}
\,\omega_{Dh_x}\left(\frac{r}{m(B_x)-\varepsilon}\right).
\end{aligned}
\]
\end{proposition}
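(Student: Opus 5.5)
Write $\widetilde g_x(\varphi(t),t)=(\Phi_1(t),\Phi_2(t))$, where
\[
\Phi_1(t)=A_x\varphi(t)+h_{x,1}(\varphi(t),t),\qquad
\Phi_2(t)=B_xt+h_{x,2}(\varphi(t),t).
\]
The plan is the standard graph-transform argument in the box norm: first show that $\Phi_2$ is invertible with uniformly controlled inverse, then set $\phi=\Phi_1\circ\Phi_2^{-1}$, and finally estimate the modulus of $D\phi$ by the composition and inversion rules already recorded in \Cref{lem:bundle-Dini-ineq}.

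\textbf{Invertibility and size.} Since $\|D\varphi\|\leqslant1$ and $\|Dh_{x,2}\|\leqslant\varepsilon$ in the box norm (\Cref{cor:local-chart}),
\[
\|\Phi_2(t)-\Phi_2(s)\|\geqslant (m(B_x)-\varepsilon)\|t-s\|.
\]
So $\Phi_2$ is injective, and by invariance of domain together with a degree (or contraction-mapping) argument its image contains $F(gx,(m(B_x)-\varepsilon)q)$. Here one uses that $\Phi_2(0)=0$, because $h_x(0)=0$ and $\varphi(0)=0$. Moreover $\|D\Phi_2^{-1}\|\leqslant (m(B_x)-\varepsilon)^{-1}$, and $\Phi_2^{-1}$ is Lipschitz with that constant.

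Define $\phi=\Phi_1\circ\Phi_2^{-1}$ on $F(gx,(m(B_x)-\varepsilon)q)$; then $\phi(0)=0$. We have
\[
D\phi=D\Phi_1\,(D\Phi_2)^{-1},\qquad \|D\Phi_1\|\leqslant\|A_x\|+\varepsilon .
\]
The choice \eqref{eq:epsilon0-choice} gives $\|A_x\|+\varepsilon< m(B_x)-\varepsilon$, so $\|D\phi\|\leqslant 1$ and the graph is admissible.

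\textbf{Modulus estimate.} Let $\tau=\Phi_2^{-1}(s)$ and $\tau'=\Phi_2^{-1}(s')$ with $\|s-s'\|\leqslant r$, so that $\|\tau-\tau'\|\leqslant r/(m(B_x)-\varepsilon)=:r'$. Split the difference as
\[
D\phi(s)-D\phi(s')
=\bigl(D\Phi_1(\tau)-D\Phi_1(\tau')\bigr)D\Phi_2(\tau)^{-1}
+D\Phi_1(\tau')\bigl(D\Phi_2(\tau)^{-1}-D\Phi_2(\tau')^{-1}\bigr).
\]
For the second term use $P^{-1}-Q^{-1}=P^{-1}(Q-P)Q^{-1}$, as in \Cref{D2}. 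The term-by-term bounds are as follows.

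The factor $D\Phi_1=A_xD\varphi+Dh_{x,1}\circ(\varphi,\mathrm{id})\cdot(D\varphi,I)$ has modulus at most $\|A_x\|\,\omega_{D\varphi}(r')+\omega_{Dh_x}(r')+\varepsilon\,\omega_{D\varphi}(r')$. This uses that $(\varphi,\mathrm{id})$ is $1$-Lipschitz in the box norm and $\|(D\varphi,I)\|\leqslant1$.

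Similarly, $D\Phi_2=B_x+Dh_{x,2}(\cdots)(D\varphi,I)$ has modulus at most $\omega_{Dh_x}(r')+\varepsilon\,\omega_{D\varphi}(r')$.

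Multiplying by $\|D\Phi_2^{-1}\|\leqslant (m(B_x)-\varepsilon)^{-1}$ (squared for the inverse term), and using $\|D\Phi_1\|\leqslant\|A_x\|+\varepsilon\leqslant m(B_x)$ in the second term, gives an $\omega_{D\varphi}(r')$ coefficient of
\[
\frac{(\|A_x\|+\varepsilon)(m(B_x)-\varepsilon)+m(B_x)\varepsilon}{(m(B_x)-\varepsilon)^2}.
\]
The $\omega_{Dh_x}(r')$ coefficient is
\[
\frac{(m(B_x)-\varepsilon)+m(B_x)}{(m(B_x)-\varepsilon)^2}.
\]

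\textbf{Main obstacle.} The hard part is making these coefficients match the stated ones exactly. I expect the stated constants to follow by a slightly cruder grouping: bound $\|A_x\|\omega_{D\varphi}+\varepsilon\omega_{D\varphi}$ together, and use $\varepsilon\leqslant m(B_x)$-type inequalities. If they do not match exactly, I would regroup the terms, for instance by writing $D\phi\,D\Phi_2=D\Phi_1$ and differencing this identity to get a linear inequality for $\omega_{D\phi}$, and absorb the discrepancies using \eqref{eq:epsilon0-choice}. The qualitative structure, in which the $\omega_{D\varphi}$ coefficient is at most a domination ratio and the argument is rescaled by $(m(B_x)-\varepsilon)^{-1}$, is what matters downstream. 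The remaining technical point, surjectivity of $\Phi_2$ onto the ball, can be handled by the standard Lipschitz-inverse-function argument, since $\Phi_2-B_x$ is $\varepsilon$-Lipschitz.
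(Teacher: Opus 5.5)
Your route is the same as the paper's. You invert the $F$-component $\Phi_2$ by the Lipschitz inverse function theorem, write $D\phi=D\Phi_1\,(D\Phi_2)^{-1}$, and estimate the product and the inverse with \Cref{lem:bundle-Dini-ineq}; your splitting of $D\phi(s)-D\phi(s')$ reproduces the paper's intermediate bound. The defect is in the last bookkeeping step, and it is exactly why your constants do not match. In the second term you weakened $\|D\Phi_1(\tau')\|\leqslant\|A_x\|+\varepsilon$ to $\|D\Phi_1(\tau')\|\leqslant m(B_x)$. Since $\|A_x\|+\varepsilon<m(B_x)$, both of your coefficients are strictly larger than the stated ones, so as written you prove a weaker inequality than the proposition. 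Your suggested remedies (a cruder grouping, $\varepsilon\leqslant m(B_x)$-type inequalities) go the wrong way, because loosening an upper bound can only increase the constants. The fix is to keep the sharper factor $\|A_x\|+\varepsilon$; the two contributions to each coefficient then combine exactly:
\[
\frac{\|A_x\|+\varepsilon}{m(B_x)-\varepsilon}+\frac{(\|A_x\|+\varepsilon)\,\varepsilon}{(m(B_x)-\varepsilon)^2}=\frac{(\|A_x\|+\varepsilon)\,m(B_x)}{(m(B_x)-\varepsilon)^2},
\]
\[
\frac{1}{m(B_x)-\varepsilon}+\frac{\|A_x\|+\varepsilon}{(m(B_x)-\varepsilon)^2}=\frac{\|A_x\|+m(B_x)}{(m(B_x)-\varepsilon)^2}.
\]
This is the paper's computation.

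The discrepancy is not merely cosmetic. Downstream (\Cref{lem:Dini-Tn} and the plaque estimate) one needs $\kappa<1$ under the standing choice \eqref{eq:epsilon0-choice}. The stated coefficient is below $1$ because $\|A_x\|+3\varepsilon<m(B_x)$. Your coefficient $\bigl((\|A_x\|+\varepsilon)(m(B_x)-\varepsilon)+m(B_x)\varepsilon\bigr)/(m(B_x)-\varepsilon)^2$ approaches $5/4$ when $\|A_x\|$ is small and $\varepsilon$ is close to $m(B_x)/3$. So it cannot be ``absorbed using \eqref{eq:epsilon0-choice}'' without shrinking $\varepsilon_0$ further. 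The rest of your sketch is sound: the box-norm Lipschitz bounds, $\|(D\varphi,I)\|\leqslant 1$, and surjectivity onto $F(gx,(m(B_x)-\varepsilon)q)$ using $\Phi_2(0)=0$.
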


\begin{proof}
Write
\[
    \widetilde g_x(\varphi(t),t)=
    \bigl(A_x\varphi(t)+h_{x,1}(\varphi(t),t),
    B_xt+h_{x,2}(\varphi(t),t)\bigr).
\]
Set
\[
    \tau(t)=B_xt+h_{x,2}(\varphi(t),t).
\]
Since
\[
    \|D(h_{x,2}(\varphi(t),t))\|
    \leqslant \|Dh_{x,2}\|\max\{\|D\varphi\|,1\}
    \leqslant \varepsilon,
\]
the Lipschitz inverse function theorem \cite[pp.~137--138]{HP70} gives an inverse $t=t(\tau)$ on $F(gx,(m(B_x)-\varepsilon)q)$, with
\[
    \|Dt\|\leqslant \frac1{m(B_x)-\varepsilon}.
\]
Define
\[
    \phi(\tau)=A_x\varphi(t(\tau))+h_{x,1}(\varphi(t(\tau)),t(\tau)).
\]
Then
\[
\begin{aligned}
D\phi(\tau)
&=A_xD\varphi(t(\tau))Dt(\tau) \\
&\quad +Dh_{x,1}(\varphi(t(\tau)),t(\tau))
\begin{pmatrix}D\varphi(t(\tau))\\ I\end{pmatrix}Dt(\tau).
\end{aligned}
\]
The choice of $\varepsilon_0$ in \Cref{eq:epsilon0-choice} gives
\[
    \|D\phi\|
    \leqslant \frac{\|A_x\|+\varepsilon}{m(B_x)-\varepsilon}
    \leqslant 1,
\]
so the graph is admissible.

It remains to estimate the modulus. Since
\[
    Dt=\left(B_x+Dh_{x,2}(\varphi\circ t,t)
    \begin{pmatrix}D\varphi\circ t\\ I\end{pmatrix}\right)^{-1},
\]
using Lemma \ref{lem:bundle-Dini-ineq}, we have
\[
\begin{aligned}
    \omega_{Dt}(r)
    &\leqslant \frac{1}{(m(B_x)-\varepsilon)^2}
    \omega_{Dh_{x,2}(\varphi\circ t,t)
    \begin{psmallmatrix}D\varphi\circ t\\ I\end{psmallmatrix}}(r)\\
    &\leqslant \frac{1}{(m(B_x)-\varepsilon)^2}
    \left(\omega_{Dh_{x,2}(\varphi\circ t,t)}(r)
    +\varepsilon\,\omega_{D\varphi\circ t}(r)\right).
\end{aligned}
\]
Here all maps are considered in the $\tau$-coordinate. Moreover, the graph map $\tau\mapsto(\varphi(t(\tau)),t(\tau))$ is
$1/(m(B_x)-\varepsilon)$-Lipschitz in the box norm. Hence
\[
\begin{aligned}
    \omega_{D\varphi\circ t}(r)
    &\leqslant \omega_{D\varphi}\left(\frac{r}{m(B_x)-\varepsilon}\right),\\
    \omega_{Dh_{x,j}(\varphi\circ t,t)}(r)
    &\leqslant
    \omega_{Dh_x}\left(\frac{r}{m(B_x)-\varepsilon}\right),
\end{aligned}
\]
for $j=1,2$. Expanding the differences of the two products in the displayed formula for $D\phi$ and using
$\|D\varphi\|\leqslant1$, $\|Dh_{x,j}\|\leqslant\varepsilon$ and
$\|Dt\|\leqslant(m(B_x)-\varepsilon)^{-1}$, by Lemma \ref{lem:bundle-Dini-ineq} we obtain
\[
\begin{aligned}
\omega_{D\phi}(r)
&\leqslant
\frac{\|A_x\|+\varepsilon}{m(B_x)-\varepsilon}
    \omega_{D\varphi\circ t}(r)
+\frac{1}{m(B_x)-\varepsilon}
    \omega_{Dh_x(\varphi\circ t,t)}(r)\\
&\quad
+(\|A_x\|+\varepsilon)\omega_{Dt}(r).
\end{aligned}
\]
Substituting the bound for $\omega_{Dt}$, we get the asserted estimate.
\end{proof}

Set
\[
    \kappa=
    \sup_{x\in\operatorname{supp}(\mu)}
    \frac{(\|A_x\|+\varepsilon)m(B_x)}{(m(B_x)-\varepsilon)^2}.
\]

Based on the choice of $\varepsilon_0$, we have $0<\kappa<1$. Set
\[
    K=\sup_{x\in\operatorname{supp}(\mu)}
    \frac{\|A_x\|+m(B_x)}{(m(B_x)-\varepsilon)^2}<\infty.
\]
Then \Cref{prop:graph-transform} and \Cref{cor:local-chart} imply the following.

\begin{corollary}\label{cor:iterating-dini}
Under the assumptions of \Cref{prop:graph-transform},
\[
    \omega_{D\phi}(r)
    \leqslant \kappa\,\omega_{D\varphi}\left(\frac{r}{m(B_x)-\varepsilon}\right)
    +KC_2\,\omega\left(\frac{r}{m(B_x)-\varepsilon}\right).
\]
\end{corollary}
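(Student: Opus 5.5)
The corollary follows by substituting the uniform constants into the estimate of \Cref{prop:graph-transform}, so the plan is a direct bound of the two coefficients and the two moduli appearing there.

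For the first term, the coefficient
\[
\frac{(\|A_x\|+\varepsilon)m(B_x)}{(m(B_x)-\varepsilon)^2}
\]
is bounded above by its supremum over $x\in\operatorname{supp}(\mu)$, which is $\kappa$ by definition. The argument of $\omega_{D\varphi}$ is left unchanged. No monotonicity issue arises here, since we only bound the coefficient and not the argument.

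For the second term, the coefficient
\[
\frac{\|A_x\|+m(B_x)}{(m(B_x)-\varepsilon)^2}
\]
is bounded by $K$, again by definition as a supremum. Its finiteness comes from compactness of $\operatorname{supp}(\mu)$, continuity of $A_x,B_x$, and $m(B_x)-\varepsilon\geqslant 2\varepsilon_0>0$, which follows from \eqref{eq:epsilon0-choice}. Then \Cref{cor:local-chart}(v) gives $\omega_{Dh_x}(s)\leqslant C_2\omega(s)$ for all $s\geqslant 0$. We apply it with $s=r/(m(B_x)-\varepsilon)$.

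Adding the two bounds gives the stated inequality. The only point needing care is that the same $\varepsilon$ and $\rho$ are used in \Cref{cor:local-chart} and in the definitions of $\kappa,K$, so that $C_2$ is uniform in $x$.

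There is no real obstacle here. The one thing to check is that the positivity of $m(B_x)-\varepsilon$ holds uniformly, so $K<\infty$, and this is exactly what \eqref{eq:epsilon0-choice} provides.
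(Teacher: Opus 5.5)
Your proposal is correct and is exactly the paper's argument: the paper simply states that \Cref{prop:graph-transform} together with \Cref{cor:local-chart} yields the bound, replacing the two coefficients by their suprema $\kappa$ and $K$ and bounding $\omega_{Dh_x}$ by $C_2\omega$ at the rescaled radius. The paper leaves implicit the checks you add (uniform positivity of $m(B_x)-\varepsilon$ from \eqref{eq:epsilon0-choice}, hence $K<\infty$, and using the same $\varepsilon,\rho$ so that $C_2$ is uniform in $x$), and they are valid.
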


\subsection{Modulus estimates for plaques}
We first determine the sizes of the plaques.
Recall
\[
    \max\{\lambda_{k+1}(\mu,f),0\}<\theta<\lambda_k(\mu,f),
\]
and choose $\chi$ such that
\[
    N\theta<\chi<N(\lambda_k-\eta).
\]
Decreasing $\varepsilon_0$ once more if necessary, fix $\delta>0$ such that
\begin{equation}\label{eq:delta-choice}
    \chi+\delta<N(\lambda_k-\eta)
\end{equation}
and
\begin{equation}\label{eq:epsilon-delta-choice}
    \varepsilon_0\leqslant (1-e^{-\delta})
    \inf_{x\in\operatorname{supp}(\mu)}m(Dg|_{F(x)}).
\end{equation}
For $0<\varepsilon\leqslant\varepsilon_0$, define
\[
    L(x)=\min\left\{\inf_{\ell\geqslant 1}
    \prod_{i=1}^{\ell}\frac{m(Dg|_{F(g^{-i}x)})-\varepsilon}{e^\chi},1\right\}.
\]

\begin{lemma}\label{lem:size-L}
For $\mu$-almost every $x$, $L(x)>0$. Moreover,
\[
    L(gx)\leqslant \frac{m(Dg|_{F(x)})-\varepsilon}{e^\chi}L(x).
\]
\end{lemma}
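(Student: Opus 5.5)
Write $a_i(x)=\log\bigl(m(Dg|_{F(g^{-i}x)})-\varepsilon\bigr)-\chi$ and $S_\ell(x)=\sum_{i=1}^{\ell}a_i(x)$. Then
\[
    L(x)=\min\Bigl\{\inf_{\ell\geqslant1}e^{S_\ell(x)},\,1\Bigr\},
\]
so positivity of $L(x)$ is equivalent to $\inf_\ell S_\ell(x)>-\infty$. I will prove this from the Birkhoff limit \eqref{eq:Birkhoff-positive-F} and then check the recursion directly from the definition.

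\emph{Positivity.} First compare $\log(m-\varepsilon)$ with $\log m$, where $m=m(Dg|_{F(y)})$. By \eqref{eq:epsilon-delta-choice}, $\varepsilon\leqslant(1-e^{-\delta})m$, so $m-\varepsilon\geqslant e^{-\delta}m$. Hence
\[
    a_i(x)\geqslant \log m(Dg|_{F(g^{-i}x)})-\delta-\chi .
\]
By \eqref{eq:Birkhoff-positive-F} (the $g^{-1}$-Birkhoff average of $\log m(Dg|_F)$, which exists $\mu$-a.e.), for $\mu$-a.e.\ $x$,
\[
    \liminf_{\ell\to\infty}\frac1\ell S_\ell(x)\geqslant N(\lambda_k-\eta)-\delta-\chi .
\]
This quantity is strictly positive by \eqref{eq:delta-choice}. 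Therefore $S_\ell(x)\to+\infty$, so only finitely many $S_\ell(x)$ can be small. Each $S_\ell(x)$ is finite because $m(Dg|_F)-\varepsilon\geqslant e^{-\delta}\inf m(Dg|_F)>0$ on the support. Hence $\inf_\ell S_\ell(x)$ is a minimum over finitely many finite numbers together with a tail bounded below, and so $L(x)>0$. The only point needing care is that the limit in \eqref{eq:Birkhoff-positive-F} is along backward iterates, which is exactly the form of $S_\ell$; no further ergodic input is needed.

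\emph{Recursion.} Note that $g^{-i}(gx)=g^{-(i-1)}x$, so
\[
    S_\ell(gx)=a_0(x)+S_{\ell-1}(x),\qquad a_0(x):=\log\bigl(m(Dg|_{F(x)})-\varepsilon\bigr)-\chi,
\]
with the convention $S_0=0$. Thus
\[
    \inf_{\ell\geqslant1}e^{S_\ell(gx)}=e^{a_0(x)}\min\Bigl\{1,\inf_{\ell\geqslant1}e^{S_\ell(x)}\Bigr\}=e^{a_0(x)}L(x),
\]
and therefore
\[
    L(gx)=\min\{e^{a_0(x)}L(x),1\}\leqslant e^{a_0(x)}L(x),
\]
which is the claimed inequality. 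The only subtlety is that the $\ell=1$ term for $gx$ yields the factor $e^{a_0}\cdot1$, which is absorbed by the truncation at $1$ built into $L(x)$; this is why the definition includes the $\min\{\cdot,1\}$.
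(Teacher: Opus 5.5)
Your proof is correct and follows the paper's argument. For positivity, the bound $m(Dg|_{F})-\varepsilon\geqslant e^{-\delta}m(Dg|_{F})$, combined with the backward Birkhoff limit and the choice $\chi+\delta<N(\lambda_k-\eta)$, forces the partial products to tend to $+\infty$; the recursion, which the paper says follows directly from the defining products, is exactly your index shift $S_\ell(gx)=a_0(x)+S_{\ell-1}(x)$, giving $L(gx)=\min\{e^{a_0(x)}L(x),1\}$.
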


\begin{proof}
By \Cref{eq:epsilon-delta-choice},
\[
    m(Dg|_{F(z)})-\varepsilon\geqslant e^{-\delta}m(Dg|_{F(z)})
\]
for all $z\in\operatorname{supp}(\mu)$. Hence
\[
\begin{aligned}
    \prod_{i=1}^{\ell}\frac{m(Dg|_{F(g^{-i}x)})-\varepsilon}{e^\chi}
    &\geqslant
    \exp\left(\sum_{i=1}^{\ell}\log m(Dg|_{F(g^{-i}x)})-
    \ell(\chi+\delta)\right).
\end{aligned}
\]
For $\mu$-almost every $x$, \Cref{eq:Birkhoff-positive-F} and \Cref{eq:delta-choice} imply that the right-hand side tends to $+\infty$ as $\ell\to\infty$. Therefore the minimum defining $L(x)$ is positive.

The second assertion follows from the defining products for $L(x)$ directly.
\end{proof}

We also need these radii to shrink subexponentially along typical backward orbits.
See \cite[Lemma 8.7]{ABC2011} for the proof.
\begin{lemma}\label{lem:L-tempered}
For every $\gamma>0$ and for $\mu$-almost every $x$, there exists $C_L(x,\gamma)>0$ such that
\[
    L(g^{-n}x)\geqslant C_L(x,\gamma)e^{-\gamma n},
    \qquad n\geqslant0.
\]
\end{lemma}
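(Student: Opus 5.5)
The plan is to show that $\log L$ has slowly varying growth along backward orbits, through a standard tempering argument built from the recursion in \Cref{lem:size-L} and Birkhoff's theorem. Write
\[
    a(z)=\log\frac{m(Dg|_{F(z)})-\varepsilon}{e^\chi}.
\]
This is a bounded continuous function on $\operatorname{supp}(\mu)$; boundedness follows from \Cref{eq:epsilon-delta-choice} and compactness. Unwinding the definition, $L(x)=\min\{1,\inf_{\ell\geqslant1}\exp S_\ell(x)\}$ with $S_\ell(x)=\sum_{i=1}^{\ell}a(g^{-i}x)$. For the orbit point $g^{-n}x$ we have
\[
    S_\ell(g^{-n}x)=\sum_{i=n+1}^{n+\ell}a(g^{-i}x)=S_{n+\ell}(x)-S_n(x).
\]
Hence
\[
    \log L(g^{-n}x)=\min\Bigl\{0,\ \inf_{m>n}S_m(x)-S_n(x)\Bigr\}.
\]

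The key step is to control $\inf_{m>n}S_m(x)-S_n(x)$ from below by $-\gamma n$ minus a constant. By \Cref{eq:Birkhoff-positive-F} and \Cref{eq:delta-choice}, applied to the inverse map $g^{-1}$ (which preserves $\mu$, though possibly not ergodically; Lemma~\ref{lem:ABC-average} already gives a limit for a.e.\ $x$, which is all we use), we get $S_m(x)/m\to c$ for some constant $c\geqslant N(\lambda_k-\eta)-\chi-\delta>0$. Given $\gamma>0$, choose $n_0=n_0(x,\gamma)$ such that $|S_m(x)-cm|\leqslant\gamma m/2$ for all $m\geqslant n_0$. Then for $m>n\geqslant n_0$,
\[
    S_m(x)-S_n(x)\geqslant c(m-n)-\tfrac{\gamma}{2}(m+n)
    \geqslant -\gamma n+\bigl(c-\tfrac{\gamma}{2}\bigr)(m-n).
\]
If $\gamma<2c$, this is at least $-\gamma n$. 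Shrinking $\gamma$ is harmless, since the conclusion for a smaller $\gamma$ implies it for a larger one.

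For the finitely many $n<n_0$, note that $L(g^{-n}x)>0$ for every $n$: \Cref{lem:size-L} applies to $\mu$-a.e.\ point, and the full-measure set can be taken $g$-invariant. Setting $C_L(x,\gamma)=\min\{1,\min_{n<n_0}L(g^{-n}x)e^{\gamma n}\}$ then gives $L(g^{-n}x)\geqslant C_Le^{-\gamma n}$ for all $n\geqslant0$. For $n\geqslant n_0$ the bound holds with constant $1$, because $\log L(g^{-n}x)\geqslant\min\{0,-\gamma n\}$.

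The main obstacle is justifying the Birkhoff limit of $S_m(x)/m$ along backward orbits at a positive rate with $m$ tied to the same base point $x$. This is exactly what \Cref{eq:Birkhoff-positive-F} provides, since $\log(m-\varepsilon)\geqslant\log m-\delta$. The remaining care is only the lower bound $c>0$, which also yields $L(x)>0$ as in \Cref{lem:size-L}. No ergodicity of $g$ is needed, because the argument is pointwise.
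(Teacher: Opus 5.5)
Your proof is correct, and it supplies an actual argument where the paper gives none: the paper only cites \cite[Lemma~8.7]{ABC2011} here, and again for \Cref{lem:size-M}. You unwind $L$ explicitly to get $\log L(g^{-n}x)=\min\{0,\inf_{m>n}S_m(x)-S_n(x)\}$. This reduces the tempering to the sublinear error $|S_m(x)-c(x)m|=o(m)$ in Birkhoff's theorem. The argument is elementary, and for $n\geqslant n_0$ it even gives the bound with constant $1$.

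One attribution should be corrected. You really need the existence of $\lim_m S_m(x)/m$: a positive $\liminf$ alone would not control $S_m-S_n$ for $m$ just above $n$. That limit comes from Birkhoff's theorem applied to $a$ itself under the $\mu$-preserving map $g^{-1}$. It does not come from \Cref{lem:ABC-average}, which concerns $\log m(Dg|_F)$ rather than $\log(m(Dg|_F)-\varepsilon)$. That lemma, via \eqref{eq:Birkhoff-positive-F}, \eqref{eq:epsilon-delta-choice} and \eqref{eq:delta-choice}, only gives the uniform lower bound $c(x)\geqslant N(\lambda_k-\eta)-\chi-\delta>0$. That bound is still useful: because it is uniform, the possible $x$-dependence of $c$ when $\mu$ is not $g$-ergodic does no harm.

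For comparison, the standard tempering route uses only the recursion $\log L(gx)=\min\{0,a(x)+\log L(x)\}$.
\begin{itemize}
\item The recursion gives $u=\log L\circ g^{-1}-\log L\geqslant-\sup|a|$.
\item \Cref{lem:coboundary-integrability}, with $T=g^{-1}$ and $R=1/L$, then yields $u\in L^1(\mu)$ and $\int u\,d\mu=0$.
\item By Birkhoff, $\frac1n\log L(g^{-n}x)$ converges a.e. The limit is $\leqslant0$ because $L\leqslant1$, and its integral is $0$, so it vanishes a.e.
\end{itemize}
That route needs only one-sided control of the increment and $L>0$ a.e., so it applies to any size function satisfying such a recursion without unwinding it. Yours avoids the coboundary lemma and is completely explicit and quantitative.
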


By \Cref{prop:graph-transform}, $\widetilde g_x$ maps an $L(x)\rho$-admissible manifold at $x$ into a set containing an $(m(B_x)-\varepsilon)L(x)\rho$-admissible manifold at $gx$. \Cref{lem:size-L} allows us to restrict this image to an $L(gx)\rho$-admissible manifold. Suppose the representing function of the $L(x)\rho$-admissible manifold of $x$ is $\varphi$ and the representing function of the $L(gx)\rho$-admissible manifold of $gx$ is $\phi$. The mapping from $\varphi$ to $\phi$ is called the graph transform $T$, i.e., $\phi=T(\varphi).$

For $\mu$-almost every $x$, let $\varphi_{-n}$ be the representing function of an $L(g^{-n}x)\rho$-admissible manifold at $g^{-n}x$.
Set
\[
    W_x=\left\{(\varphi_x(t),t):t\in F(x,L(x)\rho)\right\}.
\]

Define also
\[
    M(x)=\max\left\{\sup_{\ell\geqslant0}
    \prod_{i=1}^{\ell}\frac1{m(B_{g^{-i}x})-\varepsilon},1\right\}.
\]
The following lemma shows the subexponentially varying property of $M(x)$. See \cite[Lemma 8.7]{ABC2011} for the proof.
\begin{lemma}\label{lem:size-M}
For $\mu$-almost every $x$, $M(x)<\infty$. Moreover, for every $\gamma>0$ and $\mu$-almost every $x$, there exists $C_M(x,\gamma)>0$ such that
\[
    M(g^{-n}x)\leqslant C_M(x,\gamma)e^{\gamma n},
    \qquad n\geqslant0.
\]
\end{lemma}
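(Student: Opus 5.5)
This follows the strategy of \cite[Lemma 8.7]{ABC2011}: reduce both claims to an ergodic-average statement for the sequence $a_i=-\log\bigl(m(B_{g^{-i}x})-\varepsilon\bigr)$. Write
\[
S_\ell(x)=\sum_{i=1}^{\ell}\log\bigl(m(B_{g^{-i}x})-\varepsilon\bigr).
\]
Then $M(x)=\max\{\sup_{\ell\geqslant0}e^{-S_\ell(x)},1\}$, with $S_0=0$.

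\textbf{Finiteness of $M(x)$.} By \Cref{eq:epsilon-delta-choice}, $\log(m(B_z)-\varepsilon)\geqslant\log m(B_z)-\delta$. Hence \Cref{eq:Birkhoff-positive-F} gives
\[
\liminf_{\ell\to\infty}\frac{S_\ell(x)}{\ell}\geqslant N(\lambda_k-\eta)-\delta>\chi>0
\]
for $\mu$-a.e.\ $x$, using \Cref{eq:delta-choice}. So $S_\ell(x)\to+\infty$, only finitely many $\ell$ have $S_\ell(x)<0$, and $\sup_\ell e^{-S_\ell(x)}<\infty$. This gives $M(x)<\infty$.

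\textbf{Tempered growth along the backward orbit.} Fix $\gamma>0$ and set $y=g^{-n}x$. The point $g^{-i}y$ equals $g^{-(n+i)}x$, so $S_\ell(y)=S_{n+\ell}(x)-S_n(x)$. Therefore
\[
\log M(g^{-n}x)=\max\Bigl\{0,\ \sup_{\ell\geqslant0}\bigl(S_n(x)-S_{n+\ell}(x)\bigr)\Bigr\}.
\]
The function $z\mapsto\log(m(B_z)-\varepsilon)$ is continuous and bounded on $\operatorname{supp}(\mu)$. Under $g^{-1}$ its Birkhoff averages converge a.e. Since $\mu$ is $f$-ergodic, $g=f^N$ has at most $N$ ergodic components; on each the limit is a constant $c\geqslant N(\lambda_k-\eta)-\delta>0$. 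So for a.e.\ $x$ there is $n_0(x)$ with
\[
\Bigl|\frac{S_j(x)}{j}-c\Bigr|<\frac{\gamma}{2}\qquad\text{for all } j\geqslant n_0,
\]
and we may take $\gamma<c$.

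Now split into cases to bound $S_n-S_{n+\ell}$.

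\emph{Case $n\geqslant n_0$.} Then
\[
S_n-S_{n+\ell}\leqslant n\Bigl(c+\tfrac{\gamma}{2}\Bigr)-(n+\ell)\Bigl(c-\tfrac{\gamma}{2}\Bigr)=n\gamma-\ell\Bigl(c-\tfrac{\gamma}{2}\Bigr)\leqslant n\gamma.
\]

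\emph{Case $n<n_0$.} Let $R=\max_z|\log(m(B_z)-\varepsilon)|$. If $n+\ell\geqslant n_0$, then $S_n\leqslant n_0R$ and $S_{n+\ell}\geqslant0$ after enlarging $n_0$ (possible since $c-\gamma/2>0$), so $S_n-S_{n+\ell}\leqslant n_0R$. If $n+\ell<n_0$, then $S_n-S_{n+\ell}\leqslant \ell R\leqslant n_0R$.

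Combining the cases, $\log M(g^{-n}x)\leqslant n_0R+\gamma n$. We may take $C_M(x,\gamma)=e^{n_0(x)R}$.

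\textbf{Main obstacle.} The delicate point is uniformity in $\ell$. A one-sided Birkhoff estimate at time $n+\ell$ is not enough, because the supremum over $\ell$ must not accumulate. This is handled because the lower bound $c-\gamma/2>0$ makes the tail terms $-\ell(c-\gamma/2)$ negative. A second point is that $g$ need not be ergodic. This is harmless: each ergodic component has a positive limit by \Cref{eq:Birkhoff-positive-F}, and the argument above is pointwise.
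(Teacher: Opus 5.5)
Your proof is correct. The paper does not prove this lemma itself; it only refers to \cite[Lemma~8.7]{ABC2011}. Your direct Birkhoff-average argument is a valid self-contained substitute for that citation: finiteness comes from $S_\ell/\ell\to c>0$, and temperedness from the identity $\log M(g^{-n}x)=\sup_{\ell\geqslant0}\bigl(S_n(x)-S_{n+\ell}(x)\bigr)$ together with the two-case bound.

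A useful feature of writing it out is that it shows where the $-\varepsilon$ in the definition of $M$ is absorbed. By \Cref{eq:epsilon-delta-choice} you have $\log(m(B_z)-\varepsilon)\geqslant\log m(B_z)-\delta$. Then \Cref{eq:delta-choice} leaves the margin $N(\lambda_k-\eta)-\delta>\chi>0$, and that margin is exactly what both parts of your argument use.

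Two steps can be simplified:
\begin{enumerate}
\item You do not need the ergodic decomposition of $g$. \Cref{eq:Birkhoff-positive-F} already gives $c(x)>N(\lambda_k-\eta)-\delta$ pointwise for $\mu$-almost every $x$, where $c(x)$ is the Birkhoff limit of $S_j(x)/j$.
\item No enlargement of $n_0$ is needed. For $n+\ell\geqslant n_0$ you already have $S_{n+\ell}\geqslant(n+\ell)(c-\gamma/2)>0$.
\end{enumerate}
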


\begin{lemma}\label{lem:Dini-Tn}
For $\mu$-almost every $x$ and every $\varphi_{-n}$ as above, denote $C_3=\frac{KC_2}{1-\kappa}$, then
\[
    \omega_{D(T^n\varphi_{-n})}(r)
    \leqslant \kappa^n\omega_{D\varphi_{-n}}(M(x)r)
    +C_3\omega(M(x)r).
\]
\end{lemma}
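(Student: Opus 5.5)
We will prove the estimate by iterating \Cref{cor:iterating-dini} $n$ times along the backward orbit $g^{-n}x,\dots,g^{-1}x$, keeping track of how the arguments of the moduli get rescaled. Write $\varphi_{-n+j}=T^j\varphi_{-n}$ for the representing function at $g^{-n+j}x$, and set $s_i=m(B_{g^{-i}x})-\varepsilon$. Applying \Cref{cor:iterating-dini} at the point $g^{-i}x$ gives, for every $r\geqslant0$,
\[
    \omega_{D\varphi_{-i+1}}(r)\leqslant \kappa\,\omega_{D\varphi_{-i}}\!\left(\frac{r}{s_i}\right)+KC_2\,\omega\!\left(\frac{r}{s_i}\right).
\]
The restriction of the image graph to the smaller domain $F(g^{-i+1}x,L(g^{-i+1}x)\rho)$, allowed by \Cref{lem:size-L}, can only decrease the modulus, so the estimate survives the truncation built into $T$.

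By induction on $n$ we then obtain
\[
    \omega_{D(T^n\varphi_{-n})}(r)\leqslant \kappa^n\,\omega_{D\varphi_{-n}}\!\left(\frac{r}{s_1\cdots s_n}\right)+KC_2\sum_{j=1}^{n}\kappa^{j-1}\,\omega\!\left(\frac{r}{s_1\cdots s_j}\right).
\]
In the inductive step we substitute the bound at level $i$ into the bound at level $i-1$, with the argument $r$ replaced by $r/(s_1\cdots s_{i-1})$. No concavity manipulation is needed at this stage, because the composed arguments are exactly products of the factors $1/s_i$.

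Next we bound the arguments uniformly. By the definition of $M(x)$ (taking the supremum over $\ell\geqslant0$), every partial product satisfies $\prod_{i=1}^{j}s_i^{-1}\leqslant M(x)$ for $0\leqslant j\leqslant n$. Since both $\omega$ and $\omega_{D\varphi_{-n}}$ are non-decreasing, each argument may be replaced by $M(x)r$. Summing the geometric series $\sum_{j\geqslant1}\kappa^{j-1}=1/(1-\kappa)$, which converges because $0<\kappa<1$, gives the constant $C_3=KC_2/(1-\kappa)$ and hence the claimed inequality. Finiteness of $M(x)$ for $\mu$-almost every $x$ is supplied by \Cref{lem:size-M}, and this is the only reason the statement is restricted to almost every $x$.

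The main point requiring care is the bookkeeping of arguments in the induction, namely that the rescaling at each level is exactly $1/s_i$ relative to the previous level, so that the total rescaling is the partial product controlled by $M(x)$. A second point is to check that \Cref{cor:iterating-dini} applies at each stage. For this we need that each $T^j\varphi_{-n}$ is again admissible, so that $\|D\varphi\|\leqslant1$ holds and \Cref{prop:graph-transform} can be used; \Cref{prop:graph-transform} itself guarantees this. We also need the moduli in \Cref{cor:local-chart} to hold for all $r\geqslant0$, which is what allows arguments larger than $\rho$ to appear without extra case analysis.
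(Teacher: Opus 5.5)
Your proposal is correct and is essentially the paper's proof: iterating \Cref{cor:iterating-dini} along $g^{-n}x,\dots,g^{-1}x$ gives the same bound, with the partial products $\prod_{i\leqslant j}(m(B_{g^{-i}x})-\varepsilon)^{-1}$ as arguments; these are bounded by $M(x)$ using monotonicity, and the geometric series in $\kappa$ yields $C_3$. Your extra checks are details the paper leaves implicit, namely that the truncation to $L(g^{-i+1}x)\rho$ allowed by \Cref{lem:size-L} only decreases the modulus and that each iterate stays admissible.
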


\begin{proof}
Iterating \Cref{cor:iterating-dini} gives
\[
\begin{aligned}
\omega_{D(T^n\varphi_{-n})}(r)
&\leqslant \kappa^n\omega_{D\varphi_{-n}}
\left(r\prod_{i=1}^n\frac{1}{m(B_{g^{-i}x})-\varepsilon}\right) \\
&\quad +\sum_{j=0}^{n-1}\kappa^j KC_2
\omega\left(r\prod_{i=1}^{j+1}\frac{1}{m(B_{g^{-i}x})-\varepsilon}\right).
\end{aligned}
\]
The definition of $M(x)$ and the monotonicity of $\omega$ give the claimed bound.
\end{proof}

\begin{corollary}\label{cor:plaque-Dini}
For $\mu$-almost every $x$,
\[
    \omega_{D\varphi_x}(r)
    \leqslant C_3\omega(M(x)r).
\]
In particular, $W_x$ is a $C^{1+\omega}$ plaque.
\end{corollary}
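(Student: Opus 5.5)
The plan is to obtain $\varphi_x$ as the limit of graph transforms $T^n\varphi_{-n}$ and pass the uniform bound of \Cref{lem:Dini-Tn} to that limit. The one delicate point is that the first term there depends on $n$ and must be made to vanish.

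\emph{Step 1 (starting functions).} For each $n$, take $\varphi_{-n}\equiv 0$ on $F(g^{-n}x,L(g^{-n}x)\rho)$, i.e. the flat disc tangent to $F(g^{-n}x)$. This is trivially $L(g^{-n}x)\rho$-admissible and has $\omega_{D\varphi_{-n}}\equiv 0$. \Cref{lem:Dini-Tn} then gives, for every $n$,
\[
    \omega_{D(T^n\varphi_{-n})}(r)\leqslant C_3\,\omega(M(x)r).
\]
This bound is uniform in $n$, and $M(x)<\infty$ by \Cref{lem:size-M}. If instead one uses general admissible $\varphi_{-n}$, note that $\|D\varphi_{-n}\|\leqslant 1$ gives $\omega_{D\varphi_{-n}}\leqslant 2$. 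The first term is then at most $2\kappa^n$, which tends to $0$ because $\kappa<1$.

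\emph{Step 2 (identify the limit).} We need $T^n\varphi_{-n}\to\varphi_x$ in $C^1$ on $F(x,L(x)\rho)$, where $\varphi_x$ represents $W_x$. I would argue that $W_x$ is the restriction of the invariant plaque $\widetilde W_x$ of \Cref{pft}, equivalently the unique graph selected by backward graph transforms. The mechanism is the cone contraction underlying \Cref{prop:graph-transform}: on the graph of $\varphi$, the difference of tangent spaces of two admissible graphs is contracted by the factor $(\|A_x\|+\varepsilon)/(m(B_x)-\varepsilon)<1$. The $C^0$ distance is contracted likewise after accounting for the radii $L(g^{-i}x)\rho$. Hence $D(T^n\varphi_{-n})\to D\varphi_x$ pointwise, and uniformly on $F(x,L(x)\rho)$. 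Pointwise convergence of $D(T^n\varphi_{-n})(t_1)-D(T^n\varphi_{-n})(t_2)$ is all we need.

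\emph{Step 3 (pass to the limit).} Fix $t_1,t_2$ with $\|t_1-t_2\|\leqslant r$. For every $n$,
\[
    \|D(T^n\varphi_{-n})(t_1)-D(T^n\varphi_{-n})(t_2)\|\leqslant \kappa^n\omega_{D\varphi_{-n}}(M(x)r)+C_3\omega(M(x)r).
\]
Letting $n\to\infty$ gives $\|D\varphi_x(t_1)-D\varphi_x(t_2)\|\leqslant C_3\omega(M(x)r)$. Taking the supremum yields the claim. By \Cref{lem:basic-modulus}, $\omega(M(x)r)\leqslant M(x)\omega(r)$ since $M(x)\geqslant 1$. Therefore $D\varphi_x$ is $\omega$-continuous with constant $C_3M(x)$, and $W_x$ is a $C^{1+\omega}$ plaque.

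\emph{Main obstacle.} I expect Step 2, the convergence to and identification with $\varphi_x$, to be the hardest, because the radii vary along the orbit and domination holds only infinitesimally in the charts. I would first try the standard contraction of the graph transform on the complete space of admissible graphs in the $C^0$ metric plus the tangent-space metric. This uses \Cref{cor:local-chart}(iv) and the inequality $\|A_x\|+\varepsilon<m(B_x)-\varepsilon$ from \Cref{eq:epsilon0-choice}. If that fails, I would define $W_x$ directly as $\lim_n T^n\varphi_{-n}$ with $\varphi_{-n}=\widetilde\varphi_{g^{-n}x}$ restricted. Then $T^n\varphi_{-n}=\varphi_x$ exactly by the invariance $\widetilde g(\widetilde W_y)=\widetilde W_{gy}$, and only Step 1's $\kappa^n\cdot 2$ bound is needed.
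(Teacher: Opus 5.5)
Your fallback in the last paragraph is exactly the paper's proof. The paper takes $\varphi_{-n}=\varphi_{g^{-n}x}$, the invariant plaque of \Cref{pft} restricted to $F(g^{-n}x,L(g^{-n}x)\rho)$. Invariance gives $T^n\varphi_{-n}=\varphi_x$ exactly, the first term of \Cref{lem:Dini-Tn} is bounded by $2\kappa^n$ via $\|D\varphi_{-n}\|\leqslant 1$, and $n\to\infty$. It finishes with $\omega(M(x)r)\leqslant M(x)\omega(r)$, just as you do. With this choice the sequence $T^n\varphi_{-n}$ is constant, so your Step 2 disappears. The only limit taken is in the numerical bound $2\kappa^n+C_3\omega(M(x)r)$.

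Your primary flat-disc route is a genuine detour. It gives slightly more: a uniform-in-$n$ Dini bound for every approximant $T^n0$, and a description of $W_x$ as the common limit of graph transforms of arbitrary admissible discs. But Step 2 carries all the cost, and your sketch understates it.

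The slope contraction you invoke compares two slopes at the same point. Its factor is really $(\|A_x\|+2\varepsilon)/(m(B_x)-\varepsilon)$, still $<1$ by \eqref{eq:epsilon0-choice}. However, the points of the two graphs lying over a common $\tau$ have different preimage coordinates $s\neq s'$. So each step produces extra error terms $\|Dh_x(p)-Dh_x(p')\|$ and $\|D\varphi(s)-D\varphi(s')\|$, which are not contracted. To conclude $D(T^n0)\to D\varphi_x$ you would need three things:
\begin{enumerate}
\item the weighted $C^0$ contraction first, for instance in the metric $\sup_{t\neq0}\|\varphi(t)-\psi(t)\|/\|t\|$, checking that the preimages stay in the shrinking domains via \Cref{lem:size-L};
\item a bound on these extra terms at level $i$ by a modulus evaluated at a quantity of order $c^{\,n-i}$;
\item a check that $\sum_{i<n}c^{\,i}\,\omega(Cc^{\,n-i})\to0$.
\end{enumerate}
The term $\|D\varphi(s)-D\varphi(s')\|$ for the invariant plaques must be controlled by the uniform equicontinuity coming from the continuity statement in \Cref{pft}. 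It cannot come from the estimate you are trying to prove. All of this is standard but unnecessary, given the exact invariance you already noted.
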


\begin{proof}
Take $\varphi_{-n}=\varphi_{g^{-n}x}$. The invariance of the plaque family gives $T^n\varphi_{-n}=\varphi_x$. Since $\|D\varphi_{-n}\|\leqslant1$,
\[
    \omega_{D\varphi_x}(r)
    \leqslant 2\kappa^n+C_3\omega(M(x)r).
\]
Letting $n\to\infty$ proves the estimate. By \Cref{lem:basic-modulus},
$\omega(M(x)r)\leqslant M(x)\omega(r)$. Thus $D\varphi_x$ is $\omega$-continuous, and $W_x$ is a $C^{1+\omega}$ plaque.
\end{proof}
\begin{remark}\label{rem:nonuniform-plaque-Dini}
The plaque family is defined at every point of
$\operatorname{supp}(\mu)$, but the regularity conclusion of
\Cref{cor:plaque-Dini} is only an almost-everywhere, nonuniform one.
Indeed, the quantity $M(x)$ is known to be finite only for
$\mu$-almost every $x$, and on this full-measure set the estimate gives
\[
    [D\varphi_x]_\omega\leqslant C_3M(x),
\]
where $M(x)$ could be unbounded. The counterexample for the uniform estimates of the modulus of continuity can be found even when $\omega$ is H\"older. 
\end{remark}

\begin{remark}\label{rem:Dini-neighborhood}
The Dini-continuity estimates for the plaques are essential to the
entropy-formula characterization of SRB measures. Indeed, the plaques need not
be contained in $\operatorname{supp}(\mu)$; consequently, a modulus of continuity
for the dominated splitting restricted to $\operatorname{supp}(\mu)$ is insufficient for the
distortion argument and hence for the proof of the main theorem.
\end{remark}

\subsection{Bounded distortion on plaques}

Let $\widetilde g\colon TM\to TM$ be the bundle map whose fiber map over $x$ is $\widetilde g_x$. For $v\in T_xM$ and $n\geqslant0$, write
\[
    \widetilde g^{-n}(v)=
    \widetilde g_{g^{-n}x}^{-1}\circ\cdots\circ
    \widetilde g_{g^{-2}x}^{-1}\circ\widetilde g_{g^{-1}x}^{-1}(v).
\]

\begin{theorem}\label{thm:plaque-distortion}
For $\mu$-almost every $x$ and every $v,w\in W_x$, the following assertions hold.
\begin{enumerate}[label=(\arabic*),leftmargin=*]
    \item For every $\ell\geqslant0$,
    \[
        \|\widetilde g^{-\ell}v-\widetilde g^{-\ell}w\|
        \leqslant 2e^{-\ell\chi}\rho.
    \]
    \item Let
    \[
        \widetilde J^k(u)=\left|\operatorname{Jac}
        \left(D\widetilde g|_{T_uW_{\pi(u)}}\right)\right|.
    \]
    Then
    \[
        \prod_{\ell=1}^{\infty}
        \frac{\widetilde J^k(\widetilde g^{-\ell}v)}
             {\widetilde J^k(\widetilde g^{-\ell}w)}
    \]
    converges. The limit is bounded away from $0$ and $\infty$ by constants depending on $x$, but not on $v,w\in W_x$.
\end{enumerate}
\end{theorem}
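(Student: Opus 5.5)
For part (1), I will argue by backward contraction along the plaque family. Write $v=(\varphi_x(s),s)$ and $w=(\varphi_x(t),t)$ with $s,t\in F(x,L(x)\rho)$. By the invariance $\widetilde g_{g^{-1}x}(\widetilde W_{g^{-1}x})=\widetilde W_x$ from \Cref{pft}, together with the construction of $W_x$ via the graph transform $T$, the preimages $\widetilde g^{-\ell}v$ and $\widetilde g^{-\ell}w$ lie on $W_{g^{-\ell}x}$. Their $F$-coordinates $s_\ell,t_\ell$ lie in $F(g^{-\ell}x,L(g^{-\ell}x)\rho)$, because the inverse map $t=t(\tau)$ from the proof of \Cref{prop:graph-transform} sends the $L(gy)\rho$-disc into the $L(y)\rho$-disc by \Cref{lem:size-L}. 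That same proof gives $\|Dt\|\leqslant (m(B_y)-\varepsilon)^{-1}$. Iterating, I get
\[
\|s_\ell-t_\ell\|\leqslant \prod_{i=1}^{\ell}\frac{1}{m(B_{g^{-i}x})-\varepsilon}\,\|s-t\|.
\]
The definition of $L$ gives $\prod_{i=1}^{\ell}(m(B_{g^{-i}x})-\varepsilon)\geqslant e^{\ell\chi}L(x)$. Since $\|s-t\|\leqslant 2L(x)\rho$, this yields $\|s_\ell-t_\ell\|\leqslant 2e^{-\ell\chi}\rho$. In the box norm the $E$-component difference is bounded by $\|D\varphi\|\,\|s_\ell-t_\ell\|\leqslant\|s_\ell-t_\ell\|$, so the box distance equals $\|s_\ell-t_\ell\|$, and (1) follows.

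For part (2), I will show that $\sum_\ell|\log\widetilde J^k(\widetilde g^{-\ell}v)-\log\widetilde J^k(\widetilde g^{-\ell}w)|$ converges, with a bound depending only on $x$. On compact sets $\widetilde J^k$ is bounded below by a positive constant, since $D\widetilde g$ restricted to an admissible graph direction has conorm at least $m(B)-\varepsilon>0$. Hence $|\log a-\log b|\leqslant C|a-b|$, and it suffices to bound the differences of Jacobians themselves. The Jacobian of a linear map restricted to a $k$-plane depends Lipschitz-continuously on the linear map and on the plane (in the Grassmannian), with uniform constants on bounded sets. At level $\ell$ the relevant linear maps are $D\widetilde g_{g^{-\ell}x}$ at the two points, whose difference is at most $C_2\omega(\|\widetilde g^{-\ell}v-\widetilde g^{-\ell}w\|)$ by \Cref{cor:local-chart}(v). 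The relevant planes are the graphs of $D\varphi_{g^{-\ell}x}$ at $s_\ell,t_\ell$, whose difference is at most $C_3\omega(M(g^{-\ell}x)\|s_\ell-t_\ell\|)$ by \Cref{cor:plaque-Dini}. Combining these with part (1), the $\ell$-th term is at most
\[
C\bigl(\omega(2\rho e^{-\ell\chi})+\omega(2\rho M(g^{-\ell}x)e^{-\ell\chi})\bigr).
\]

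The main obstacle is the factor $M(g^{-\ell}x)$, because the plaque regularity is only nonuniform (\Cref{rem:nonuniform-plaque-Dini}). I will control it with \Cref{lem:size-M}, choosing $\gamma<\chi/2$: this gives $M(g^{-\ell}x)e^{-\ell\chi}\leqslant C_M(x,\gamma)e^{-\ell\chi/2}$. By \Cref{lem:basic-modulus}, the $\ell$-th term is then at most $C\max\{1,2\rho C_M(x,\gamma)\}\,\omega(e^{-\ell\chi/2})$. \Cref{prop:dini-summability} makes this series summable, with a sum $S(x)$ independent of $v,w$. Absolute convergence of the log-series gives convergence of the product, and the limit lies in $[e^{-S(x)},e^{S(x)}]$.

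There is one technical point to settle. Each identity $T^n\varphi_{-n}=\varphi_x$, and hence each point $\widetilde g^{-\ell}v$, lives on the plaque of the backward orbit point $g^{-\ell}x$. So I must check that \Cref{cor:plaque-Dini} applies at each $g^{-\ell}x$ with constant $C_3M(g^{-\ell}x)$. It does, because the full-measure set can be taken $g$-invariant.
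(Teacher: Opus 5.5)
Your proposal is correct and follows the paper's proof essentially step for step. Part (1) goes through the backward contraction of the $F$-coordinates under the graph transform, combined with $\prod_{i=1}^{\ell}(m(B_{g^{-i}x})-\varepsilon)\geqslant e^{\ell\chi}L(x)$. Part (2) goes through the Lipschitz dependence of $\log\widetilde J^k$ on the linear map and the tangent plane, the Dini bounds of \Cref{cor:local-chart} and \Cref{cor:plaque-Dini}, the temperedness of $M$ from \Cref{lem:size-M}, and \Cref{prop:dini-summability}. Your explicit checks that the backward iterates stay in the $L(g^{-\ell}x)\rho$-discs and that the full-measure set can be taken $g$-invariant are points the paper leaves implicit; choosing $\gamma<\chi/2$ instead of $\gamma=\chi/2$ makes no difference.
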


\begin{proof}
The first assertion follows by reading the expansion of the $F$-coordinate backwards. The second assertion then follows by summing the variations of the logarithmic Jacobian along the exponentially contracting backward orbit.

Write
\[
    \widetilde g^{-i}v=(\varphi_{-i}(v_{-i}),v_{-i}),
    \qquad
    \widetilde g^{-i}w=(\varphi_{-i}(w_{-i}),w_{-i}),
\]
where $\varphi_{-i}=\varphi_{g^{-i}x}$. Since $\|D\varphi_{-i}\|\leqslant1$ and we use the box norm,
\[
    \|\widetilde g^{-i}v-\widetilde g^{-i}w\|=\|v_{-i}-w_{-i}\|.
\]
The $F$-coordinate of the graph transform gives
\[
    \|v_{-i+1}-w_{-i+1}\|
    \geqslant (m(B_{g^{-i}x})-\varepsilon)\|v_{-i}-w_{-i}\|.
\]
Therefore
\[
\begin{aligned}
    \|\widetilde g^{-\ell}v-\widetilde g^{-\ell}w\|
    &\leqslant
    \prod_{i=1}^{\ell}\frac1{m(B_{g^{-i}x})-\varepsilon}\|v-w\| \\
    &\leqslant L(x)^{-1}e^{-\ell\chi}\,2L(x)\rho
    =2e^{-\ell\chi}\rho.
\end{aligned}
\]
This proves (1).

For (2), it suffices to show that the logarithms form an absolutely convergent series. The restriction of $D\widetilde g$ to the tangent spaces of the plaques has conorm bounded below by $\inf_x(m(B_x)-\varepsilon)>0$, so the logarithm of the $k$-Jacobian is locally Lipschitz as a function of the linear map and of the tangent $k$-plane. Since $D\widetilde g_x$ is $\omega$-continuous and the tangent space of $W_x$ is the graph of $D\varphi_x$, there exists $C_4>0$ such that, for points on the same plaque,
\[
    |\log\widetilde J^k(z)-\log\widetilde J^k(z')|
    \leqslant C_4\left(\omega_{D\widetilde g}(\|z-z'\|)+
    \|D\varphi(z)-D\varphi(z')\|\right).
\]
Using (1), \Cref{cor:local-chart}, \Cref{cor:plaque-Dini}, and \Cref{lem:size-M} with $\gamma=\chi/2$, we obtain, for $\mu$-almost every $x$,
\[
\begin{aligned}
&|\log\widetilde J^k(\widetilde g^{-i}v)-
\log\widetilde J^k(\widetilde g^{-i}w)| \\
&\qquad\leqslant
C_4\left(C_2\omega(2e^{-i\chi}\rho)+
C_3\omega(2C_M(x,\chi/2)\rho e^{-i\chi/2})\right).
\end{aligned}
\]
The right-hand side is summable in $i$ by \Cref{prop:dini-summability}. Hence, the logarithm of the distortion product converges and is bounded by a constant depending only on $x$. This proves (2).
\end{proof}

\begin{corollary}\label{key}
    For $\mu$-almost every $x$ and every $y\in W^k(x)$,
    $$
    \prod_{i=1}^{\infty}
    \frac{\bar J^k(g^{-i}y)}{\bar J^k(g^{-i}x)},
    \qquad
    \bar J^k(y)\coloneqq
    \left|
        \operatorname{Jac}
        \left(Dg|_{T_yW^k(x)}\right)
    \right|
    $$
    is bounded away from $0$ and $\infty$ on any compact subset of
    $W^k(x)$.
\end{corollary}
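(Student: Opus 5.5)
The plan is to transfer \Cref{thm:plaque-distortion} from the truncated chart maps $\widetilde g_x$ on the plaques $W_x$ to the genuine map $g$ on the Pesin unstable manifold $W^k(x)$. This uses three facts: local identification of plaques with unstable manifolds, backward invariance, and a chain-rule splitting of the product.

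First, I identify $\exp_x(W_x)$ near $x$ with a neighborhood of $x$ in $W^k(x)$. For a point $y=\exp_x v$ with $v\in W_x$, part (1) of \Cref{thm:plaque-distortion} gives $\|\widetilde g^{-\ell}v\|\leqslant 2e^{-\ell\chi}\rho$. I would first show that $\widetilde g^{-\ell}v$ lies in $T_{g^{-\ell}x}M(\rho)$ for all $\ell$, shrinking the radius by a fixed factor if needed; this uses the tempered radius $L(g^{-\ell}x)\rho$ together with the graph-transform invariance $T\varphi_{g^{-n-1}x}=\varphi_{g^{-n}x}$. On that ball $\widetilde g=g$ in charts, so $g^{-\ell}y=\exp_{g^{-\ell}x}(\widetilde g^{-\ell}v)$, and this is at distance at most $Ce^{-\ell\chi}$ from $g^{-\ell}x$. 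Since $\chi/N>\theta>\lambda_{k+1}$ and $\chi/N<\lambda_k$, the characterization of $W^k$ from \cite{ABC2011,Gan2002} (rates between $\lambda_{k+1}$ and $\lambda_k$) places $y$ in $W^k(x)$. The two sets are $k$-dimensional embedded discs tangent to $F(x)$, so they coincide on a neighborhood $U_x$ of $x$. Through the exponential chart, the tangent spaces agree and $\bar J^k(g^{-i}y)=\widetilde J^k(\widetilde g^{-i}v)\cdot c_i(y)$. Here $c_i$ comes from the derivatives of the exponential maps; it is smooth and equals $1$ at the zero section, and its variation is $O(e^{-i\chi})$, which is summable. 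So on $U_x$, taking $w=0$ in \Cref{thm:plaque-distortion}(2), the product converges and is bounded above and below by constants depending only on $x$.

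Second, I globalize to an arbitrary compact $K\subset W^k(x)$. Since $W^k(x)=\bigcup_n g^n(W^k_{loc}(g^{-n}x))$, there is $n_0$ with $g^{-n_0}K\subset U_{g^{-n_0}x}$. This uses uniform backward contraction on compact pieces, which comes from the definition of $W^k$ and compactness. I then split the product as
\[
\prod_{i=1}^{n_0}\frac{\bar J^k(g^{-i}y)}{\bar J^k(g^{-i}x)}\cdot\prod_{i=1}^{\infty}\frac{\bar J^k(g^{-i}(g^{-n_0}y))}{\bar J^k(g^{-i}(g^{-n_0}x))}\cdot\frac{\bar J^k(g^{-n_0}\cdot)}{\cdots}.
\]
The finite part is a product of continuous positive functions on the compact set $K$ (continuity of $y\mapsto T_yW^k(x)$ along the $C^1$ leaf), so it is bounded. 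The tail is controlled by the first step applied at $g^{-n_0}x$, which is typical because the full-measure set can be taken $g$-invariant.

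The main obstacle is the first step: rigorously showing that the backward $\widetilde g$-orbit of plaque points stays inside the region where $\widetilde g=g$, so that the plaque genuinely sits in $W^k(x)$. I would handle it with the tempered estimate of \Cref{lem:L-tempered}: the bound $2e^{-\ell\chi}\rho$ decays, but the condition needed is relative to $\rho$ itself, which is fixed. So the first step should only require $2e^{-\ell\chi}\rho\leqslant\rho$ for $\ell\geqslant1$, plus a direct check for $\ell=0$ after restricting $W_x$ to radius $\rho/2$.
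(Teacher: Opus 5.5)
Your proposal is correct and follows essentially the same route as the paper: identify $\exp_x(W_x)$ with a leaf neighborhood of $x$ in $W^k(x)$, push a compact set backward into a plaque $\exp_{g^{-m}x}(W_{g^{-m}x})$, split off a finite product, and compare $\bar J^k$ with $\widetilde J^k$ through the exponential charts. The paper telescopes $\prod_{i=j}^{\ell}$ so that only the two endpoint $\exp$-factors appear, while your per-step factors $c_i=1+O(e^{-i\chi})$ are summable; either works.

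One small correction: the inequality $2e^{-\ell\chi}\rho\leqslant\rho$ for $\ell\geqslant1$ requires $\chi\geqslant\log 2$, which is not assumed. You do not need it, however. Taking $w=0$ in the proof of item (1) of \Cref{thm:plaque-distortion} gives $\|\widetilde g^{-\ell}v\|\leqslant L(x)^{-1}e^{-\ell\chi}\|v\|\leqslant e^{-\ell\chi}\rho$. Equivalently, $\widetilde g^{-\ell}(W_x)\subset W_{g^{-\ell}x}$ together with $L\leqslant1$ gives the same bound, with no tempering or shrinking of the radius required.
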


\begin{proof}
    Note that, for $\mu$-almost every $x$, $W_x$ is an
    $L(x)\rho$-admissible manifold. Since $L(x)\leqslant1$ and
    $\widetilde g_x(v)=g_x(v)$ for $\|v\|\leqslant\rho$, according to
    item (1) of \Cref{thm:plaque-distortion},
    $\exp_x(W_x)\subset W^k(x)$ and forms a compact neighborhood of $x$
    in $W^k(x)$. Since $L(x)$ varies subexponentially and, for any
    $y\in W^k(x)$, $d(g^{-n}y,g^{-n}x)$ tends to $0$ exponentially fast,
    it is easy to see that there exists an integer $m\geqslant1$ such that
    \[
        g^{-m}y\in
        \exp_{g^{-m}x}(W_{g^{-m}x}).
    \]
    Moreover, for any compact neighborhood $V$ of $x$ in $W^k(x)$,
    there exists an integer $m\geqslant1$ such that
    \[
        g^{-m}y\in
        \exp_{g^{-m}x}(W_{g^{-m}x})
    \]
    for any $y\in V$.

    Since
    $$
    \prod_{i=1}^{\infty}
    \frac{\bar J^k(g^{-i}y)}{\bar J^k(g^{-i}x)}
    =
    \prod_{i=1}^{m-1}
    \frac{\bar J^k(g^{-i}y)}{\bar J^k(g^{-i}x)}
    \cdot
    \prod_{i=m}^{\infty}
    \frac{\bar J^k(g^{-i}y)}{\bar J^k(g^{-i}x)},
    $$
    we only have to show that
    $$
    \prod_{i=m}^{\infty}
    \frac{\bar J^k(g^{-i}y)}{\bar J^k(g^{-i}x)}
    $$
    is bounded away from $0$ and $\infty$ on $V$.

    For simplicity, assume that $m=1$, $y=\exp_x(v)$, and
    $v\in T_xM(L(x)\rho)$. For any $\ell\geqslant j\geqslant m=1$,
    $$
    \prod_{i=j}^{\ell}\bar J^k(g^{-i}y)
    =
    \prod_{i=j}^{\ell}
    \left|
        \operatorname{Jac}
        \left(
            Dg|_{T_{g^{-i}y}W^k(g^{-i}x)}
        \right)
    \right|
    =
    \left|
        \operatorname{Jac}
        \left(
            Dg^{\ell-j+1}
            |_{T_{g^{-\ell}y}W^k(g^{-\ell}x)}
        \right)
    \right|.
    $$
    Naturally, here the Jacobian is with respect to the inner products
    on $T_{g^{-\ell}y}M$ and $T_{g^{-j+1}y}M$. But the Jacobian in
    $$
    \prod_{i=j}^{\ell}
    \widetilde J^k(\widetilde g^{-i}v)
    =
    \prod_{i=j}^{\ell}
    \left|
        \operatorname{Jac}
        \left(
            D\widetilde g
            |_{T_{\widetilde g^{-i}v}W_{g^{-i}x}}
        \right)
    \right|
    =
    \left|
        \operatorname{Jac}
        \left(
            D\widetilde g^{\ell-j+1}
            |_{T_{\widetilde g^{-\ell}v}W_{g^{-\ell}x}}
        \right)
    \right|
    $$
    is, of course, with respect to the inner products on
    $T_{g^{-\ell}x}M$ and $T_{g^{-j+1}x}M$.

    Since the inner product on $T_zM$ varies $C^\infty$-smoothly with
    $z$, there exists a constant $B>0$ such that, for every
    $v\in T_xM(L(x)\rho)$ and $\ell\geqslant j\geqslant m$,
    $$
    \left|
        \frac{
            \prod_{i=j}^{\ell}
            \widetilde J^k(\widetilde g^{-i}v)
        }{
            \prod_{i=j}^{\ell}
            \bar J^k(g^{-i}y)
        }
        -1
    \right|
    \leqslant
    B\left(
        d(g^{-\ell}y,g^{-\ell}x)
        +
        d(g^{-j+1}y,g^{-j+1}x)
    \right).
    $$
    Now, the conclusion of the corollary follows from
    \Cref{thm:plaque-distortion}.
\end{proof}

\section{Proof of the Main Theorem}\label{sec:proof-main}

This section applies the geometric distortion estimate from \Cref{sec:geometric} to obtain the absolute continuity of conditional measures. The argument follows the ideas in \cite[Section 6]{LY2}. We present it here to make the Dini-regularity argument self-contained.

We prove \Cref{main-thm-intro}. The implication from absolute continuity to the partial entropy formula is proved in \cite{WWZ}. We prove the converse. Let $\xi$ be an increasing measurable partition subordinate to $W^k$ as in \cite{LS82,TY,Part2,WWZ}. We choose it so that its atoms contain local neighborhoods inside the corresponding Pesin unstable manifolds. Thus, $(f^{-1}\xi)(x)\subset \xi(x)$ for $\mu$-almost every $x$, and the restriction of $f^{-1}\xi$ to each atom $\xi(x)$ is countable.

If $z$ belongs to a Pesin unstable manifold $W^k(x)$, we write
\[
    J^k(z)=\left|\operatorname{Jac}\left(Df|_{T_zW^k(x)}\right)\right|,
\]
suppressing the leaf from the notation. This convention is harmless because all points under consideration remain on the corresponding iterates of the same unstable leaf. For $\mu$-almost every $x$ and every $y\in\xi(x)$, define
\begin{equation}\label{eq:Delta-def}
    \Delta(x,y)=\prod_{\ell=1}^{\infty}
    \frac{J^k(f^{-\ell}x)}{J^k(f^{-\ell}y)}.
\end{equation}
The convergence and boundedness of this product on atoms of $\xi$ follow from \Cref{key}. Indeed, since $z$ lies on a Pesin unstable leaf $W$ and $g=f^N$, we have
\[
    \bar J^k(z)=\prod_{r=0}^{N-1}J^k(f^r z).
\]
Thus, the product in \Cref{eq:Delta-def} is obtained by regrouping the $g$-distortion product into blocks of length $N$. The bounded distortion estimate for $g$ therefore gives the required convergence and boundedness for $f$.

Let $m_x$ be the Riemannian volume on $W^k(x)$. Since $\Delta(x,\cdot)$ is positive and bounded on $\xi(x)$, the quantity
\[
    R(x)=\int_{\xi(x)}\Delta(x,y)\,dm_x(y)
\]
is positive and finite for $\mu$-almost every $x$. Define probability measures on the atoms of $\xi$ by
\[
    d\nu_x(y)=\frac{\Delta(x,y)}{R(x)}\,dm_x(y).
\]
This is the natural candidate for the disintegration: the density is chosen so that its transformation under $f$ has exactly the Jacobian prescribed by \Cref{eq:Delta-def}. Together with the quotient measure induced by $\mu$ on the space of atoms of $\xi$, these conditional measures define a probability measure $\nu$. By construction, $\nu$ and $\mu$ coincide on the $\sigma$-algebra $\mathscr B_\xi$ of $\xi$-saturated measurable sets.

We use the following elementary coboundary lemma; see \cite[Proposition 2.2]{LS82} for the proof.

\begin{lemma}\label{lem:coboundary-integrability}
Let $(X,\mathcal B,\mu,T)$ be an invertible probability-preserving system. Let $R\colon X\to(0,\infty)$ be a finite measurable function, and set
\[
    u=\log\frac{R\circ T}{R}.
\]
If $u^+:=\max\{u,0\}\in L^1(\mu)$, then $u\in L^1(\mu)$ and $\int u\,d\mu=0$.
\end{lemma}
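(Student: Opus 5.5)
The plan is to apply the Birkhoff ergodic theorem to truncations of $u$, and to use the telescoping structure of $u$ to identify the limit of its Birkhoff averages as $0$. The key observation is that the Birkhoff sums of $u$ telescope:
\[
    S_nu:=\sum_{i=0}^{n-1}u\circ T^i=\log R\circ T^n-\log R .
\]
Since $R$ is finite and positive, $\log R$ is a finite measurable function. The term $\frac1n\log R$ clearly tends to $0$ pointwise. For the other term, $T$ preserves $\mu$, so for every $c>0$
\[
    \mu\bigl(|\log R\circ T^n|>cn\bigr)=\mu\bigl(|\log R|>cn\bigr)\longrightarrow0 .
\]
Hence $\frac1nS_nu\to0$ in measure. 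No ergodicity is assumed, and none is needed.

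Next I handle the possible non-integrability of $u^-$ by truncation. For $K>0$, set $u_K=\max\{u,-K\}$. Then $u_K\in L^1(\mu)$, because $|u_K|\leqslant u^++K$, and $u\leqslant u_K$. Let $\mathcal I$ denote the $\sigma$-algebra of $T$-invariant sets. By Birkhoff's theorem, $\frac1nS_nu_K\to\mathbb E(u_K\mid\mathcal I)$ almost everywhere. Since $S_nu\leqslant S_nu_K$, we get
\[
    \limsup_{n\to\infty}\tfrac1nS_nu\leqslant\mathbb E(u_K\mid\mathcal I)\quad\text{a.e.}
\]
Because $\frac1nS_nu\to0$ in measure, some subsequence converges to $0$ almost everywhere, so the $\limsup$ above is $\geqslant0$ a.e. Therefore $\mathbb E(u_K\mid\mathcal I)\geqslant0$ a.e. for every $K$.

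Writing $u_K=u^+-\min\{u^-,K\}$, this says $\mathbb E(\min\{u^-,K\}\mid\mathcal I)\leqslant\mathbb E(u^+\mid\mathcal I)$. Integrating and letting $K\to\infty$ by monotone convergence gives $\int u^-\,d\mu\leqslant\int u^+\,d\mu<\infty$, so $u\in L^1(\mu)$.

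With integrability established, the standard Birkhoff theorem gives $\frac1nS_nu\to\mathbb E(u\mid\mathcal I)$ almost everywhere. An almost-everywhere limit must agree with the limit in measure, which is $0$, so $\mathbb E(u\mid\mathcal I)=0$ a.e. Integrating gives $\int u\,d\mu=0$.

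The only delicate step is the first one: $\log R$ need not be integrable, so one cannot simply integrate the telescoping identity. The argument works because invariance of $\mu$ makes $\log R\circ T^n$ equidistributed with $\log R$, which yields convergence in measure rather than in $L^1$. That weaker convergence suffices once it is combined with the one-sided truncation bound above.
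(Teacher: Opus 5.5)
Your proof is correct and follows essentially the standard argument behind \cite[Proposition~2.2]{LS82}, which the paper cites instead of proving: you telescope the Birkhoff sums to $\log R\circ T^n-\log R$, use invariance of $\mu$ to get $\frac1n S_n u\to 0$ in measure, and apply the ergodic theorem for functions with integrable positive part (your truncation $u_K=\max\{u,-K\}$ makes this explicit) to identify the a.e.\ limit as $0$, which gives $u\in L^1(\mu)$ and $\int u\,d\mu=0$. Your argument uses neither ergodicity nor the invertibility of $T$, so it proves a slightly more general statement.
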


\begin{lemma}\label{lem:log-equality}
We have
\[
    \int -\log \nu_x((f^{-1}\xi)(x))\,d\mu(x)
    =\int \log J^k(x)\,d\mu(x).
\]
\end{lemma}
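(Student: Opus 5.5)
The plan is to compute $\nu_x((f^{-1}\xi)(x))$ explicitly from the density $\Delta(x,\cdot)/R(x)$ and reduce the identity to the coboundary \Cref{lem:coboundary-integrability}. Since $(f^{-1}\xi)(x)=f^{-1}(\xi(fx))$, I change variables $y=f^{-1}z$ with $z\in\xi(fx)$. Along the unstable leaf, $dm_x(f^{-1}z)=J^k(f^{-1}z)^{-1}\,dm_{fx}(z)$. From the definition \Cref{eq:Delta-def} one has the cocycle relation
\[
    \Delta(x,f^{-1}z)=\Delta(fx,z)\,\frac{J^k(f^{-1}z)}{J^k(x)},
\]
valid for $z\in\xi(fx)$. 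It follows by shifting the index in the infinite product, which is legitimate because the products converge by \Cref{key}. Combining these gives
\[
    \nu_x((f^{-1}\xi)(x))=\frac{1}{R(x)}\int_{\xi(fx)}\Delta(fx,z)\frac{J^k(f^{-1}z)}{J^k(x)}\frac{dm_{fx}(z)}{J^k(f^{-1}z)}
    =\frac{R(fx)}{R(x)\,J^k(x)}.
\]
Here I use that $\xi(fx)=f(\xi(x)\cap f^{-1}\xi(fx))$, which is the increasing property of $\xi$, and that $\Delta(fx,\cdot)$ is the density normalizing $R(fx)$.

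Hence $-\log\nu_x((f^{-1}\xi)(x))=\log J^k(x)-\log\frac{R(fx)}{R(x)}$. It remains to show that $u=\log(R\circ f/R)$ is integrable with integral $0$. The key observation is that $\nu_x((f^{-1}\xi)(x))\le 1$, so $\log J^k(x)-u(x)\ge 0$, i.e.\ $u\le\log J^k$. Since $J^k$ is bounded (as $f$ is $C^1$ and $M$ is compact), $u^+\in L^1(\mu)$. \Cref{lem:coboundary-integrability} applied with $T=f$ and the positive finite function $R$ then gives $u\in L^1$ and $\int u\,d\mu=0$. Integrating the pointwise identity yields the claim.

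The main obstacle is justifying the change of variables and the cocycle identity on the full atom. This requires that $\Delta(x,\cdot)$ be defined on all of $\xi(x)$, that $R$ be positive, finite and measurable, and that $f^{-1}\xi(fx)\subset\xi(x)$ hold leafwise. Each of these comes from \Cref{key} (boundedness on compact subsets of $W^k(x)$, with atoms relatively compact in the leaf) together with the choice of $\xi$. Measurability of $R$ I would take from the standard construction in \cite{LS82}.
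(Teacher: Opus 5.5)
Your proposal is correct and follows essentially the same route as the paper. The cocycle identity $\Delta(fx,fy)=\frac{J^k(x)}{J^k(y)}\Delta(x,y)$ and a change of variables along the leaf give $\nu_x((f^{-1}\xi)(x))=\frac{R(fx)}{R(x)J^k(x)}$; then $\nu_x((f^{-1}\xi)(x))\leqslant 1$ yields $u^+\leqslant\log^+J^k\in L^1(\mu)$, and the coboundary lemma gives $\int u\,d\mu=0$, exactly as in the paper.
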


\begin{proof}
Set
\[
    q(x)=\nu_x((f^{-1}\xi)(x)).
\]
Using the identity
\[
    \Delta(fx,fy)=\frac{J^k(x)}{J^k(y)}\Delta(x,y)
\]
and changing variables by $f$ along the leaf $W^k(x)$, we obtain
\[
\begin{aligned}
    q(x)
    &=\frac{1}{R(x)}\int_{(f^{-1}\xi)(x)}\Delta(x,y)\,dm_x(y) \\
    &=\frac{R(fx)}{R(x)}\cdot\frac{1}{J^k(x)}.
\end{aligned}
\]
Since $0<q(x)\leqslant1$, we have
\[
    \left(\log\frac{R(fx)}{R(x)}\right)^+
    \leqslant \log^+J^k(x).
\]
The function $\log^+J^k$ is integrable because $M$ is compact and $f$ is $C^1$. \Cref{lem:coboundary-integrability} therefore gives
\[
    \int \log\frac{R(fx)}{R(x)}\,d\mu(x)=0.
\]
Taking logarithms in the formula for $q(x)$ and integrating with $\mu$ proves the claim.
\end{proof}

\begin{lemma}\label{lem:nu-mu-on-fxi}
If
\[
    H_\mu(\xi\mid f\xi)=\int\log J^k\,d\mu,
\]
then $\nu$ and $\mu$ coincide on $\mathscr B_{f^{-1}\xi}$.
\end{lemma}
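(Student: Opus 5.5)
The plan is the standard Ledrappier--Young strict-concavity argument. Write $\mu_x$ for the Rokhlin conditional measures of $\mu$ on atoms of $\xi$, and set $p(x)=\mu_x((f^{-1}\xi)(x))$. We have $H_\mu(f^{-1}\xi\mid\xi)=H_\mu(\xi\mid f\xi)$ by invariance, so
\[
    H_\mu(\xi\mid f\xi)=\int -\log p\,d\mu .
\]
Combining the hypothesis with \Cref{lem:log-equality}, with $q(x)=\nu_x((f^{-1}\xi)(x))$ as in its proof, this gives
\[
    \int -\log p\,d\mu=\int -\log q\,d\mu,
\]
and both integrals are finite since $\log J^k$ is bounded.

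The key step is to evaluate $\int \log(q/p)\,d\mu$ atom by atom. Fix a $\mu$-typical atom $\xi(x)$. Restricted to it, $f^{-1}\xi$ is a countable partition $\{C_j\}$. On $C_j$ the functions $p$ and $q$ are constant, equal to $\mu_x(C_j)$ and $\nu_x(C_j)$ respectively. Hence
\[
    \int_{\xi(x)}\log\frac{q}{p}\,d\mu_x=\sum_j \mu_x(C_j)\log\frac{\nu_x(C_j)}{\mu_x(C_j)}\leqslant \log\sum_j\nu_x(C_j)\leqslant 0
\]
by Jensen's inequality. Integrating over the quotient and using that $\int\log(q/p)\,d\mu=0$, we get equality in Jensen for $\mu$-almost every atom. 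Strict concavity of $\log$ then forces $\nu_x(C_j)=\mu_x(C_j)$ for every $j$ with $\mu_x(C_j)>0$. Moreover $\sum_j\nu_x(C_j)=1$ over those $j$, so the $\nu_x$-mass of the remaining $C_j$ is zero.

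Hence $\mu_x$ and $\nu_x$ agree on the countable partition $f^{-1}\xi|_{\xi(x)}$. Since $\mu$ and $\nu$ already coincide on $\mathscr B_\xi$ (the quotient measures are the same), they coincide on every $f^{-1}\xi$-saturated set: for $A\in\mathscr B_{f^{-1}\xi}$ we have $\mu(A)=\int\mu_x(A)\,d\mu=\int\nu_x(A)\,d\mu=\nu(A)$.

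The main obstacle is measure-theoretic bookkeeping rather than dynamics. First, $\int\log(q/p)\,d\mu$ must be a legitimate difference of integrals, so we need both $-\log p$ and $-\log q$ to be integrable. This holds because the first is the finite entropy and the second equals $\int\log J^k\,d\mu$. Second, one must check that $\nu_x$ is a probability measure on $\xi(x)$ and that $q$ is constant on elements of $f^{-1}\xi$. Both follow from the construction together with the convergence of $\Delta$ given by \Cref{key}.
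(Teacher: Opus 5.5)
Your proposal is correct and is essentially the paper's argument. You apply concavity of $\log$ atom by atom to the countable partition $(f^{-1}\xi)|_{\xi(x)}$, integrate over the quotient of $\xi$, and use \Cref{lem:log-equality} together with the hypothesis to force equality, hence $\nu_x(C_j)=\mu_x(C_j)$ for all $j$. Your remarks on $H_\mu(\xi\mid f\xi)=H_\mu(f^{-1}\xi\mid\xi)$ and on the integrability of $\log p$ and $\log q$ only spell out steps the paper leaves implicit.
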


\begin{proof}
For $\mu$-almost every atom $\xi(x)$, the partition $(f^{-1}\xi)|_{\xi(x)}$ is countable. Denote its atoms by $\{C_j(x)\}_j$, and write
\[
    p_j(x)=\mu_x(C_j(x)),
    \qquad
    q_j(x)=\nu_x(C_j(x)).
\]
Applying the concavity of the logarithm to the set of indices for which $p_j(x)>0$ gives, for almost every $x$,
\[
    \sum_{p_j(x)>0} p_j(x)\log\frac{q_j(x)}{p_j(x)}
    \leqslant \log\sum_{p_j(x)>0} q_j(x)\leqslant0.
\]
Equality holds if and only if $q_j(x)=0$ whenever $p_j(x)=0$ and, on the remaining indices, $q_j(x)=p_j(x)$ for all $j$.

Integrating the last inequality over the quotient space of $\xi$, we obtain
\[
    \int \log\frac{\nu_y((f^{-1}\xi)(y))}
                    {\mu_y((f^{-1}\xi)(y))}\,d\mu(y)
    \leqslant 0.
\]
On the other hand, \Cref{lem:log-equality} and the assumed entropy identity give
\[
\begin{aligned}
    \int -\log \nu_x((f^{-1}\xi)(x))\,d\mu(x)
    &=\int\log J^k\,d\mu \\
    &=H_\mu(\xi\mid f\xi) \\
    &=\int -\log \mu_x((f^{-1}\xi)(x))\,d\mu(x).
\end{aligned}
\]
Therefore, the integrated logarithmic inequality is an equality. Hence, for almost every $\xi$-atom, the probability vectors $(p_j(x))_j$ and $(q_j(x))_j$ coincide. This means precisely that $\nu=\mu$ on $\mathscr B_{f^{-1}\xi}$.
\end{proof}
 
Using inductively \Cref{lem:nu-mu-on-fxi}, we obtain the following lemma.

\begin{lemma}\label{lem:iterated-equality}
Assume that
\[
    H_\mu(\xi\mid f\xi)=\int\log J^k\,d\mu.
\]
Then $\nu$ and $\mu$ coincide on $\mathscr B_{f^{-n}\xi}$ for every $n\geqslant1$.
\end{lemma}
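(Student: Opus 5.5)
The plan is to induct on $n$, using \Cref{lem:nu-mu-on-fxi} as the base case $n=1$. The induction step will transport the statement at level $n$ to level $n+1$ by invariance under $f$, after checking that the construction of $\nu$ is equivariant.

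\textbf{Invariance of $\nu$.} First I will show that $\nu$ is $f$-invariant, or at least that $f_*\nu$ and $\nu$ coincide on the relevant $\sigma$-algebras. On each atom, $f$ maps $(f^{-1}\xi)(x)$ onto a subset of $\xi(fx)$. The cocycle identity $\Delta(fx,fy)=\frac{J^k(x)}{J^k(y)}\Delta(x,y)$, together with the leafwise change of variables, shows that the push-forward of $\nu_x$ restricted to $(f^{-1}\xi)(x)$ and normalized equals $\nu_{fx}$ restricted to $f((f^{-1}\xi)(x))$ and normalized. The same computation as in \Cref{lem:log-equality} gives this. Combined with $\nu=\mu$ on $\mathscr B_{f^{-1}\xi}$, this identifies $f_*\nu$ with $\nu$ on $\mathscr B_\xi\vee$ the conditional structure. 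More precisely:
\begin{itemize}
\item The measure $\nu$ is characterized by its quotient $\mu|_{\mathscr B_\xi}$ and the densities $\Delta(x,\cdot)/R(x)$.
\item Conditioning $\nu$ on the finer partition $f^{-1}\xi$ again produces densities proportional to $\Delta$, since conditioning a measure with density proportional to $\Delta(x,\cdot)$ to a subatom yields the normalized density on that subatom.
\end{itemize}

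\textbf{Induction step.} Suppose $\nu=\mu$ on $\mathscr B_{f^{-n}\xi}$. Take a set $A\in\mathscr B_{f^{-(n+1)}\xi}$. Then $fA\in\mathscr B_{f^{-n}\xi}$, so $\mu(fA)=\nu(fA)$, and $\mu(A)=\mu(fA)$ by invariance. It then suffices to show $\nu(fA)=\nu(A)$ for $f^{-(n+1)}\xi$-saturated sets.

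For this, disintegrate $\nu$ over $f^{-1}\xi$. The quotient on $\mathscr B_{f^{-1}\xi}$ is $\mu$ by \Cref{lem:nu-mu-on-fxi}, and the conditionals are the normalized $\Delta$-densities on $(f^{-1}\xi)(x)$. By the equivariance above, these are mapped by $f$ to the normalized $\Delta$-densities on $\xi(fx)$. Hence $f^{-1}$ carries $\nu$ (disintegrated over $\xi$, with quotient $\mu$) to a measure disintegrated over $f^{-1}\xi$ with quotient $f^{-1}_*\mu|_{\xi}=\mu|_{f^{-1}\xi}$ and the same conditionals. This shows $f_*\nu=\nu$.

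An alternative is to apply \Cref{lem:nu-mu-on-fxi} to the pair $(f^{-n}\xi, f^{-n-1}\xi)$. That pair has the same entropy, $H_\mu(f^{-n}\xi\mid f^{-n+1}\xi)=H_\mu(\xi\mid f\xi)$. The conditional measures of $\nu$ on $f^{-n}\xi$ are again normalized $\Delta$-densities, so the same concavity argument yields equality on atoms of $f^{-n}\xi$. Together with the inductive hypothesis on the quotient, this gives equality on $\mathscr B_{f^{-n-1}\xi}$.

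\textbf{Main obstacle.} The delicate step is checking that the $\nu$-conditionals on the finer partitions are still the normalized $\Delta$-densities and that \Cref{lem:log-equality} holds for them. This reduces to $\Delta(x,y)\Delta(y,z)=\Delta(x,z)$ on a common leaf, which follows from the convergent product \Cref{eq:Delta-def}.
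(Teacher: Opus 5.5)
Your proposal is correct, but your main route differs from the paper's. The paper says only that the lemma follows by using \Cref{lem:nu-mu-on-fxi} inductively. That is essentially your ``alternative'': by the inductive hypothesis and the cocycle identity, $\nu$ coincides with the $\Delta$-construction built on $f^{-n}\xi$, and one reruns the concavity comparison for the pair $(f^{-n}\xi,f^{-n-1}\xi)$. This uses $H_\mu(f^{-n-1}\xi\mid f^{-n}\xi)=H_\mu(\xi\mid f\xi)$; you wrote $H_\mu(f^{-n}\xi\mid f^{-n+1}\xi)$, a harmless index slip. It also uses the fact that the $\nu$-conditional mass of $(f^{-n-1}\xi)(x)$ inside $(f^{-n}\xi)(x)$ equals $q(f^nx)$, with $q$ as in \Cref{lem:log-equality}, so its $-\log$ integrates to $\int\log J^k\,d\mu$ by invariance of $\mu$. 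You leave this last point implicit and should state it.

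Your primary route uses the entropy hypothesis only once, at $n=1$. From $\nu=\mu$ on $\mathscr B_{f^{-1}\xi}$, you combine two facts:
\begin{enumerate}
\item the equivariance: the push-forward by $f$ of the normalized restriction of $\nu_x$ to $(f^{-1}\xi)(x)$ is $\nu_{fx}$;
\item the stability of normalized $\Delta$-densities under restriction to subatoms, via $\Delta(x,y)\Delta(y,z)=\Delta(x,z)$.
\end{enumerate}
Together these show that $f^{-1}_*\nu$ and $\nu$ have the same quotient on $\mathscr B_{f^{-1}\xi}$ and the same conditionals, hence $f_*\nu=\nu$. This is sound, and it makes your induction superfluous: for $A\in\mathscr B_{f^{-n}\xi}$ one has $f^nA\in\mathscr B_\xi$, so $\nu(A)=\nu(f^nA)=\mu(f^nA)=\mu(A)$.

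What your route buys is that the only analytic input is confined to the single step $n=1$, and $f$-invariance of $\nu$ comes out as a by-product. The paper's route stays inside the Ledrappier--Young template and never needs to discuss invariance of $\nu$, at the cost of repeating the concavity argument at every level.

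A minor point: $f$ maps $(f^{-1}\xi)(x)$ onto all of $\xi(fx)$, not onto a subset, since $(f^{-1}\xi)(x)=f^{-1}(\xi(fx))$.
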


\begin{proof}[Proof of \Cref{main-thm-intro}]
Assume
\[
    h^k_\mu(f)=\sum_{i=1}^k\lambda_i(\mu,f).
\]
Since $T_xW^k(x)=F(x)$ for $\mu$-almost every $x$, Oseledets' theorem gives
\[
    \int\log J^k\,d\mu
    =\int\log\left|\det(Df|_{F})\right|\,d\mu
    =\sum_{i=1}^k\lambda_i(\mu,f).
\]
Thus the assumed partial entropy formula is equivalent to
\[
    h^k_\mu(f)=\int\log J^k\,d\mu.
\]
By \Cref{lem:iterated-equality}, $\mu$ and $\nu$ coincide on $\mathscr B_{f^{-n}\xi}$ for every $n\geqslant1$. Since $\xi$ is subordinate to the unstable manifolds and its backward iterates separate points, we have $\mu=\nu$. In particular, the conditional measures of $\mu$ on the atoms of $\xi$ are
\[
    d\mu_x(y)=\frac{\Delta(x,y)}{R(x)}\,dm_x(y),
\]
and are absolutely continuous with respect to the leafwise Riemannian volume on $W^k(x)$. This proves the converse implication. The other implication is proved in \cite{WWZ}, and the theorem follows.
\end{proof}

\section{Application: mostly expanding attractors}\label{sec:application}

This section proves the application to the mostly expanding attractors stated in the introduction, i.e., \Cref{coro2}. The proof combines smooth approximation, upper semicontinuity of partial entropy, and \Cref{main-thm-intro}; the main point is to pass the center-unstable entropy formula from smooth approximants to the Dini-regular map.

We first recall the upper semicontinuity statement for partial entropy in the form used below; see \cite[Section 1.2.1]{TY2}.

\begin{proposition}\label{prop:partial-entropy-semicontinuity}
Let $f_n\in\mathrm{Diff}^{1+\alpha}(M)$ converge to $f\in\mathrm{Diff}^1(M)$ in the $C^1$ topology, and let $\mu_n$ be $f_n$-invariant ergodic probability measures converging in the weak-$*$ topology to an $f$-invariant probability measure $\mu$. Let $k\in\mathbb N$. Assume that, for each $n$, there is an $L$-dominated splitting
\[
    T_{\operatorname{supp}(\mu_n)}M=E_n\oplus F_n,
    \qquad \dim F_n=k,
\]
for $f_n$, and that there is an $L$-dominated splitting
\[
    T_{\operatorname{supp}(\mu)}M=E\oplus F,
    \qquad \dim F=k,
\]
for $f$. Moreover, assume that
\[
    \operatorname*{ess\,inf}_{x\sim\mu}\lambda_k(x,f)>0.
\]
Then $h^k_{\mu_n}(f_n)$ and $h^k_\mu(f)$ are well-defined for all sufficiently large $n$, and
\[
    \limsup_{n\to\infty}h^k_{\mu_n}(f_n)\leqslant h^k_\mu(f).
\]
\end{proposition}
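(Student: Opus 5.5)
The plan follows the standard scheme for upper semicontinuity of unstable entropy. The key device is to realize the partial entropies as conditional entropies with respect to finite partitions and local plaques whose size is uniform in $n$.

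\emph{Well-definedness and uniform plaques.} Because the splittings for $f_n$ and $f$ are $L$-dominated with the same constant, and $f_n\to f$ in $C^1$, the plaque family theorem (\Cref{pft}) applied to $f_n$ and to $f$ yields plaques of a uniform size $\rho$. Their representing functions converge in $C^1$ as $n\to\infty$ and as the base points converge. Upper semicontinuity of $\mu\mapsto\int\log m(Df^{N}|_F)\,d\mu$, together with the hypothesis $\operatorname*{ess\,inf}\lambda_k>0$ and \Cref{lem:ABC-average}, gives the following: for $n$ large, $\mu_n$-almost every point has positive exponents along $F_n$. Hence the leaves $W^k$ are well defined for $f_n$, and $h^k_{\mu_n}(f_n)$ makes sense by the construction of \cite{WWZ}. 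For the same reason, and because the constants in \Cref{lem:size-L} can be taken uniform, a fixed iterate $g=f^N$ (respectively $g_n=f_n^N$) expands along the plaques on average, uniformly in $n$.

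\emph{Finite-partition formula for partial entropy.} Next we express the partial entropy through a finite partition. Following \cite{Yang21,HuaYangYang2020}, fix a finite partition $\mathcal A$ with $\operatorname{diam}\mathcal A<\rho/2$ and $\mu(\partial\mathcal A)=0$. Let $\eta$ be the partition into intersections of $\mathcal A$-atoms with local $F$-plaques; for $f_n$ one can use the same $\mathcal A$ with the $f_n$-plaques. We then establish the formula
\[
h^k_\mu(f)=\lim_{m\to\infty}\frac1m H_\mu\Bigl(\bigvee_{i=0}^{m-1}f^{-i}\mathcal A\,\Big|\,\eta\Bigr),
\]
and an analogous formula for $f_n$. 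Subadditivity in $m$ then gives $h^k_{\mu_n}(f_n)\leqslant\frac1m H_{\mu_n}(\mathcal A^m_{f_n}\mid\eta_n)$ for every $m$, where $\mathcal A^m_{f_n}=\bigvee_{i=0}^{m-1}f_n^{-i}\mathcal A$. The formula itself is proved by comparing $\eta$ with an increasing partition subordinate to $W^k$. The comparison uses the fact that $f^{-m}$ contracts plaques, by \Cref{thm:plaque-distortion}(1).

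\emph{Passage to the limit.} The main obstacle is the inequality
\[
\limsup_n H_{\mu_n}(\mathcal A^m_{f_n}\mid\eta_n)\leqslant H_\mu(\mathcal A^m_f\mid\eta),
\]
since conditional entropy with respect to non-finite partitions is not continuous under weak-$*$ convergence. To handle this, we rewrite the conditional entropy as an infimum over finite refinements, $H(\alpha\mid\eta)=\inf_{\beta}H(\alpha\mid\beta)$, where $\beta$ ranges over finite partitions into unions of plaque segments. Equivalently, one can use $\beta$ with small diameter transverse to $F$, chosen with $\mu$-null boundaries. For a fixed finite $\beta$, continuity of $\mu_n\mapsto H_{\mu_n}(\alpha\mid\beta)$ holds by weak-$*$ convergence and null boundaries. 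Since each $\beta$-atom meets only transversally nearby plaques, and $f_n$-plaques converge to $f$-plaques in $C^1$, the estimate $H_{\mu_n}(\alpha_n\mid\eta_n)\leqslant H_{\mu_n}(\alpha_n\mid\beta)+o(1)$ is uniform. Here the key point is that $\alpha_n$ refined by $\beta$ differs from $\alpha_n$ refined by $\eta_n$ only on a set of small measure. Letting $n\to\infty$, then refining $\beta$, then dividing by $m$ and letting $m\to\infty$, yields $\limsup_n h^k_{\mu_n}(f_n)\leqslant h^k_\mu(f)$.
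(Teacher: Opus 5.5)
\paragraph{Verdict.} The paper gives no proof of this proposition; it quotes \cite[Section~1.2.1]{TY2}. Your outline is the argument of \cite{Yang21,HuaYangYang2020} for a uniformly expanding foliation, and it has a genuine gap at the second step.

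\paragraph{Where it breaks.} The sets ``$\mathcal A$-atom $\cap$ local $F$-plaque'' do not form a partition. The plaques of \Cref{pft} are locally invariant but not coherent: the plaque through a point $y$ of the plaque at $x$ need not agree with the latter near $y$.

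Nor are $\rho$-plaques pieces of Pesin leaves. \Cref{thm:plaque-distortion}(1) gives backward contraction only on the subdisc $W_x$ of radius $L(x)\rho$. The function $L$ is merely positive and tempered almost everywhere (\Cref{lem:size-L,lem:L-tempered}). Positivity of the exponents on average does not make these constants uniform, in $x$ or in $n$, contrary to your claim that the constants in \Cref{lem:size-L} can be taken uniform.

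Replacing plaques by local Pesin leaves $W^k_{\mathrm{loc}}(x)$ does not help, because their sizes vary with $x$. Besides the partition property, you then lose everything that must be uniform in $n$:
\begin{itemize}
\item the comparison of $\eta_n$ with an increasing subordinate partition;
\item the inequality $h^k_{\mu_n}(f_n)\le\frac1m H_{\mu_n}(\mathcal A^m_{f_n}\mid\eta_n)$. Its proof by subadditivity uses that $f^m$ carries each atom of $\eta\vee\mathcal A^m_f$ into a single local leaf of definite size, i.e.\ uniform contraction of leaves of fixed size by $f^{-1}$;
\item in your third step, the fact that $\eta_n$-atoms are full plaque segments $C^1$-close to $\eta$-atoms. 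This is what gives $H_{\mu_n}(\beta\mid\eta_n)\le\mu_n(\mathrm{Bad}_n)\log\#\beta\to0$ for a fixed finite $\beta$ built from $\eta$, where $\mathrm{Bad}_n$ is the union of the $\eta_n$-atoms meeting two $\beta$-atoms.
\end{itemize}

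\paragraph{What is missing.} You need a mechanism that produces local unstable leaves of uniform size, with bounds independent of $n$. This is where $\operatorname*{ess\,inf}\lambda_k(\cdot,f)>0$ and the common domination constant $L$ must do real work, beyond giving positivity of $\lambda_k(\mu_n,f_n)$.

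A typical device is the Pliss lemma applied to $\log m(Df_n^N|_{F_n})$. It yields $c,\rho>0$ and closed sets $H_n$ of backward hyperbolic times along $F_n$, with $\mu_n(H_n)$ bounded below uniformly in $n$. On these sets the $\rho$-plaques are uniformly contracted by $f_n^{-1}$, hence are genuine local $W^k$-leaves, made coherent by domination. The partitions $\eta_n$, the finite-$m$ inequality, and the comparison with $\mu$ must then be built on (returns to) these sets, with the contribution of their complements controlled uniformly in $n$. Without such an ingredient, your Steps~2 and~3 prove the statement only in the uniformly expanding case treated in \cite{Yang21}.

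\paragraph{Minor point on Step~1.} What you need is lower, not upper, semicontinuity. By superadditivity, $\lambda_k(\nu)=\sup_N\frac1N\int\log m(Df^N|_F)\,d\nu$, and each term is jointly continuous in $(f_n,\mu_n)$ thanks to the uniform domination. Hence $\liminf_n\lambda_k(\mu_n,f_n)\ge\int\lambda_k(x,f)\,d\mu>0$, and \Cref{lem:ABC-average} is not needed.
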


\begin{proof}[Proof of \Cref{coro2}]
Let $U$ be a trapping region defining $\Lambda$ so that
\(
    \Lambda=\bigcap_{j\geqslant0}f^j(\overline U).   
\)
By the $C^1$-openness of partial hyperbolicity, every $C^1$-nearby diffeomorphism $g$ has an attractor
\[
    \Lambda_g=\bigcap_{j\geqslant0}g^j(\overline U)
\]
with the corresponding continuation of the partially hyperbolic splitting. The Gibbs-$u$ condition is also stable under small $C^1$ perturbations in the following sense. If $g_j\to f$ in the $C^1$ topology and $\nu_j$ is a Gibbs-$u$ state of $g_j|_{\Lambda_{g_j}}$, then every weak-$*$ accumulation point of the sequence $\nu_j$ satisfies the $u$-entropy formula by upper semicontinuity of partial entropy along the uniformly expanding foliation \cite{Yang21,TY2}; applying \Cref{main-thm-intro} with $F=E^u$ shows that the limit is a Gibbs-$u$ state of $f|_\Lambda$. Since $E^c_g$ varies continuously with $g$, the inequality
\[
    \int \log m(Dg^{N_0}|_{E^c_g})\,d\nu \geqslant c_0/2
\]
holds for all such $g$ close to $f$ and all Gibbs-$u$ states $\nu$ of $g|_{\Lambda_g}$. Hence we may choose $C^\infty$ diffeomorphisms $f_n$ converging to $f$ in the $C^1$ topology such that each $\Lambda_n=\Lambda_{f_n}$ is a mostly expanding partially hyperbolic attractor with splitting
\[
    T_{\Lambda_n}M=E^s_n\oplus E^c_n\oplus E^u_n.
\]
The dimensions agree with the corresponding dimensions for $f$, and the splittings converge uniformly to the splitting of $f$. 

For the smooth approximants, we use the $C^{1+\alpha}$ theory for mostly expanding systems in the Gibbs-$u$ formulation \cite{AnderssonVasquez2018}, which is based on the nonuniform expansion mechanism initiated by Alves--Bonatti--Viana \cite{ABV}. Thus each $f_n|_{\Lambda_n}$ admits an ergodic SRB measure $\mu_n$ whose center Lyapunov exponents are positive. Consequently, the Pesin unstable manifolds of $\mu_n$ are tangent to $E^{cu}_n=E^c_n\oplus E^u_n$. The entropy formula of Ledrappier--Young \cite{LY2,LY85} gives
\begin{equation}\label{eq:mun-cu-formula}
    h^{cu}_{\mu_n}(f_n)
    =\int \log\left|\det(Df_n|_{E^{cu}_n})\right|\,d\mu_n
    =\sum_{i=1}^{\dim E^{cu}}\lambda_i(\mu_n,f_n).
\end{equation}

Passing to a subsequence, assume that $\mu_n\to\mu$ in the weak-$*$ topology. The limit measure $\mu$ is $f$-invariant and supported on $\Lambda$. By previous discussions, the limit measure $\mu$ is a Gibbs-$u$ state for $f$. 

The uniform consequence of the mostly expanding assumption recalled in the introduction gives $N_0\geqslant1$ and $c_0>0$ such that every Gibbs-$u$ state $\nu$ satisfies
\[
    \int \log m(Df^{N_0}|_{E^c})\,d\nu\geqslant c_0.
\]
The ergodic components of a Gibbs-$u$ state are again Gibbs-$u$ states. Applying the last inequality to the ergodic components of $\mu$ shows that their smallest center Lyapunov exponent is at least $c_0/N_0$. Since $E^u$ is uniformly expanding, the smallest exponent along $E^{cu}$ is bounded away from zero for $\mu$-almost every point. Thus \Cref{prop:partial-entropy-semicontinuity} applies with $F=E^{cu}$.
 
The function
\[
    (g,\nu)\longmapsto \int \log\left|\det(Dg|_{E^{cu}_g})\right|\,d\nu
\]
is continuous with respect to $C^1$ convergence of the diffeomorphisms and weak-$*$ convergence of the measures, because the dominated bundles vary continuously. Taking limits in \Cref{eq:mun-cu-formula} and using \Cref{prop:partial-entropy-semicontinuity}, we obtain
\[
    h^{cu}_\mu(f)
    \geqslant \int \log\left|\det(Df|_{E^{cu}})\right|\,d\mu.
\]
The reverse inequality follows from the partial Ruelle inequality \cite{WWZ}. Hence equality holds:
\[
    h^{cu}_\mu(f)
    = \int \log\left|\det(Df|_{E^{cu}})\right|\,d\mu.
\]
Since partial entropy and the Lyapunov integral are affine with respect to the ergodic decomposition, there is an ergodic component, still denoted by $\mu$, satisfying the same equality. \Cref{main-thm-intro} applied to the dominated splitting
\[
    E^s\oplus (E^c\oplus E^u)
\]
then gives absolute continuity on the partial Pesin unstable manifolds tangent to $E^{cu}$. For this ergodic component, the center exponents are positive, while $E^s$ is uniformly contracting; hence these manifolds are the full Pesin unstable manifolds. Thus $\mu$ is an SRB measure supported on $\Lambda$.
\end{proof}

\section{\texorpdfstring{$C^1$}{C1} counterexamples}\label{sec:c1-example}

This final section recalls low-regularity counterexamples for comparison. They are independent of the proof of the main theorem and illustrate why one should not expect a direct $C^1$ analogue without additional distortion control.

The following construction is adapted from \cite{Boukhecham2024} and gives a class of $C^1$ counterexamples.

Let $\mathcal{T}=D^2\times S^1$ be the solid torus, where $$
D^2=\{(x,y)\in\mathbb{R}^2\colon x^2+y^2\leqslant 1\}, \quad S^1=[0, 2\pi] \mod 2\pi.$$ 
Let $f\colon  \mathcal{T}\to \mathcal{T}$
be the map defined on the solid torus  $\mathcal{T}$ by
\[
    f(x,y,\theta)=
    \left(\frac14x+\frac12\cos\theta,
          \frac14y+\frac12\sin\theta,
          T(\theta)\right),
\]
where $T\colon S^1\to S^1$ is a $C^1$ small perturbation of the doubling map on the circle so that $f$ is a $C^1$ diffeomorphism onto its image. As a $C^1$ small perturbation of the standard solenoid map, the map is hyperbolic on its invariant set $\Lambda=\bigcap_{n\geqslant 0}f^n\mathcal{T}$. Moreover, from \cite{Quas99} we may assume that $T$ has no absolutely continuous invariant probability measure. 

\begin{lemma}\label{lem:no-srb}
The map $f$ defined above has no SRB measure.
\end{lemma}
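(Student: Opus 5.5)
We argue by contradiction. Suppose $\mu$ is an SRB measure for $f$ on $\Lambda$, and show that its projection to the circle is an absolutely continuous $T$-invariant measure, which contradicts the choice of $T$ from \cite{Quas99}.

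\textbf{Step 1: the dynamical structure.} The first coordinate pair $(x,y)$ is uniformly contracted by the factor $\tfrac14$, and $T$ is uniformly expanding. So the splitting on $\Lambda$ has $E^s$ two-dimensional, tangent to the disc direction, and $E^u$ one-dimensional. Since the map is $C^1$-close to the solenoid, $E^u$ is a graph over the $\theta$-direction. The unstable manifolds are $C^1$ curves projecting diffeomorphically onto arcs of $S^1$ under $\pi(x,y,\theta)=\theta$. Moreover, $\pi\circ f=T\circ\pi$.

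\textbf{Step 2: project the measure.} Let $\mu$ be SRB and set $m=\pi_*\mu$; this is a $T$-invariant probability on $S^1$. Take a measurable partition $\xi$ subordinate to $W^u$. Each atom $\xi(z)$ is an unstable arc $\gamma_z$, and $\pi|_{\gamma_z}$ is a $C^1$ diffeomorphism onto its image with derivative bounded away from $0$. Since $\mu_z\ll m_{\gamma_z}$, we get $\pi_*\mu_z\ll\mathrm{Leb}_{S^1}$. Write $\mu=\int\mu_z\,d\hat\mu$, where $\hat\mu$ is the quotient measure. Then $m=\int\pi_*\mu_z\,d\hat\mu$ is an integral of absolutely continuous measures. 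If $A\subset S^1$ has Lebesgue measure zero, every $\pi_*\mu_z(A)=0$, hence $m(A)=0$. So $m$ is an absolutely continuous invariant probability for $T$, which is the contradiction.

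\textbf{Main obstacle.} The delicate point is making precise that the SRB definition applies here. Since $f$ is only $C^1$, one cannot quote Pesin theory. Instead I would use the uniform hyperbolicity of $\Lambda$: the stable manifold theorem of \cite{HPS77} holds in $C^1$ and provides the $C^1$ unstable foliation of $\Lambda$. For this foliation, subordinate partitions exist, for instance by intersecting a fine partition by small tubular neighbourhoods with the local unstable arcs. These arcs are exactly the Pesin unstable manifolds of \Cref{def:srb}, since every exponent is either negative ($-\log 4$) or positive.

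I would also check that the measurability of $z\mapsto\pi_*\mu_z$ justifies the integral decomposition of $m$. This is routine from Rokhlin's theorem.
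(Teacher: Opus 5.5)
Your proposal is correct and is essentially the paper's argument: push $\mu$ forward by $\pi$ and write $\pi_*\mu$ as an integral of the pushforwards of the conditional measures. Since $\pi$ restricted to an unstable arc is a $C^1$ local diffeomorphism, Lebesgue-null sets pull back to leafwise-null sets, so $\pi_*\mu$ is an absolutely continuous $T$-invariant measure, contradicting the choice of $T$ from \cite{Quas99}.

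The only difference is in the partition. The paper normalizes $T(0)=0$ and uses the explicit partition into arcs $\{(\phi_p(t),t):t\in[0,2\pi)\}$ based at $p\in\Lambda\cap(D^2\times\{0\})$. You instead work directly with whatever subordinate partition the SRB definition supplies, which spares you the standard fact that absolute continuity of the conditionals does not depend on the choice of partition.
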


\begin{proof}
 Without loss of generality, we assume that $T(0)=0$. It is easily seen that for every $p=(x,y,\theta)\in\Lambda$, there exists a $C^1$ map $\phi_p\colon\mathbb{R}\to D^2$ such that $\phi_p(\theta)=(x,y)$ and the graph of $\phi_p$ is the unstable manifold of $p$. Since $T(0)=0$, let $D=D^2\times\{0\}$, 
$$
\bigcup_{p\in \Lambda\cap D}\{(\phi_p(t), t)\colon t\in[0,2\pi)\}
$$
is an increasing measurable partition subordinate to the unstable manifolds. We will denote this partition by $\xi$.

Suppose, on the contrary, that there is an invariant probability measure $\mu$ whose conditional measures on unstable manifolds are absolutely continuous with respect to the leafwise Riemannian volume. 

For $p\in D\cap \Lambda$, let $\mu_p$ be the conditional measure on $\xi(p)$. Let
\[
    \pi\colon  D^2\times S^1\to S^1, (x, y, \theta)\mapsto \theta
\]
be the projection, and denote by $\pi_p$ its restriction to $\xi(p)$. Set $\nu=\pi_*\mu$. According to the characterization of the conditional measures $\{\mu_p\}$, there exists a probability measure $\tau$ on $D\cap\Lambda$ such that, for every Borel set $E\subset S^1$,
\[
    \nu(E)=\mu(\pi^{-1}E)=\mu(\bigcup_{p\in D\cap \Lambda}\pi_p^{-1}E)=\int_{D\cap \Lambda} \mu_p(\pi_p^{-1}(E))\,d\tau(p).
\]
 Note that $\pi_p\colon \xi(p)\to S^1$ is a $C^1$ local diffeomorphism. Hence, if $E\subset S^1$ has zero Lebesgue measure, then $\pi_p^{-1}(E)$ has zero Riemannian volume on $\xi(p)$. By the absolute continuity of $\mu_p$,
\[
    \mu_p(\pi_p^{-1}(E))=0
\]
for $\tau$-almost every $p$, and so $\nu(E)=0$. Thus, $\nu$ is an absolutely continuous invariant probability measure for $T$, contradicting the choice of $T$.
\end{proof}

\section*{Acknowledgements}
We would like to thank Professor Jiagang Yang for helpful suggestions and comments on earlier drafts
of this paper. 

The authors are partially supported by National Key R\&D Program of China 2022YFA1005801.

\smallskip
\noindent {\bf Data availability}
No numerical or categorical data were used in this article.

\bibliographystyle{abbrv}
{\footnotesize\bibliography{library}}

\end{document}